\documentclass[a4paper,12pt,leqno]{amsart}
\usepackage{latexsym}
\usepackage[all]{xy}

\usepackage{amssymb} 
\usepackage{amsmath} 
\usepackage{amsthm}
\usepackage{amscd}
\usepackage{color}
\usepackage{comment}
\usepackage{mathtools}
\usepackage{mathrsfs}
\usepackage{enumerate}
\usepackage{graphicx}
\usepackage{stmaryrd}
\usepackage{comment}
\usepackage{bm}
\usepackage{url}
\usepackage{enumitem}

\usepackage{hyperref} 
\hypersetup{colorlinks=true}

\definecolor{gray}{gray}{0.7}
\definecolor{Gray}{gray}{0.3}

\textwidth=16cm
\textheight=23cm
\topmargin=0.5cm
\oddsidemargin=0.0cm
\evensidemargin=0.0cm

\usepackage{amscd}

\numberwithin{equation}{section}

\theoremstyle{break}
 \newtheorem{theorem}{Theorem}[section]
 \newtheorem{proposition}[theorem]{Proposition}
 \newtheorem{corollary}[theorem]{Corollary}
 \newtheorem{lemma}[theorem]{Lemma}
 
 \newtheorem{conjecture}[theorem]{Conjecture}
 
 \newtheorem{question}[theorem]{Question}

 \theoremstyle{definition}
 \newtheorem{definition}[theorem]{Definition}
 \newtheorem{remark}[theorem]{Remark}
 \newtheorem{example}[theorem]{Example}

\newtheorem*{acknowledgements}{Acknowledgements}

\allowdisplaybreaks[3]

\def\Q{\mathbb Q}
\def\Z{\mathbb Z}

\def\O{\mathcal{O}}

\def\Spec{\operatorname{Spec}}
\def\Proj{\operatorname{Proj}}

\DeclareMathOperator{\ord}{ord}

\DeclareMathOperator{\length}{length}

\DeclareMathOperator{\WDiv}{WDiv}
\DeclareMathOperator{\CDiv}{CDiv}
\DeclareMathOperator{\Supp}{Supp}

\DeclareMathOperator{\Sign}{Sign}

\DeclareMathOperator{\Hom}{Hom}

\DeclareMathOperator{\Pic}{Pic}



\begin{document}
\title[Slope inequality, Morsification and moduli of curves]
{Slope inequality of fibered surfaces, Morsification conjecture and moduli of curves}
\author [M. Enokizono]{Makoto Enokizono}
\address{Graduate School of Mathematical Sciences, University of Tokyo, 3-8-1 Komaba, Meguro-ku, Tokyo 153-8914, Japan}
\email{\url{enokizono@g.ecc.u-tokyo.ac.jp}}

\keywords{fibered surface, slope inequality, Morsification, moduli of curves} 

\begin{abstract}
Using the theory of moduli of curves, we establish various slope inequalities for general fibered surfaces.
More precisely, we introduce the notion of functorial divisors on Artin stacks and prove a theorem concerning their effectiveness.
Considering the above concept with the Morsification conjecture and the semistable reduction, 
we obtain several slope (in)equalities, e.g., a generalization of Moriwaki's slope inequality, slope equalities of general fibered surfaces whose fibers satisfy the Morsification conjecture.
As applications, we provide a positive answer to Reid's conjecture concerning algebraic Morsification of non-hyperelliptic fibrations of genus $3$, a positive partial answer to
the question posed by Lu and Tan regarding the Chern invariants of fiber germs,
and a partial result concerning lower bounds of the slope of effective divisors on the moduli spaces of stable curves.
\end{abstract}

\maketitle

\setcounter{tocdepth}{2}

\tableofcontents

\section{Introduction}
\label{sec:Introduction}

\subsection{Geography of algebraic surfaces and fibered surfaces}
One of the aim of algebraic geometry is to classify algebraic varieties.
Numerical invariants of algebraic varieties play a crucial role in the classification problem.
In the classification of minimal algebraic surfaces $S$ of general type, the numerical invariants $K_{S}^{2}$ and $\chi(\O_{S})$ have been traditionally used, and fundamental inequalities between them have been investigated, e.g.,  

\begin{itemize}
\item
(Noether \cite{Noe}) $K_{S}^{2}\ge 2\chi(\O_{S})-6$.

\item
(Castelnuovo \cite{Cas}) $K_{S}^{2}\ge 3\chi(\O_{S})-10$ when the canonical map of $S$ is birational onto its image.

\item
(Miyaoka--Yau \cite{Miy}, \cite{Yau}) $K_{S}^{2}\le 9\chi(\O_{S})$.
\end{itemize}

The first two inequalities are closely related to the slope inequalities of fibered surfaces. 
Indeed, it is known that almost all algebraic surfaces near the Noether line (resp.\ whose canonical maps are birational onto image near the Castelnuovo line) admit genus $2$ (resp.\ non-hyperelliptic genus $3$) fibrations $f\colon S\to \mathbb{P}^1$ (\cite{Hor}, \cite{AsKo}, \cite{Kon2}). In this case, the Noether inequality (resp.\ Castelnuovo inequality) is equivalent to the slope inequality $K_{f}^{2}\ge 2\chi_{f}$ (resp.\ $K_{f}^{2}\ge 3\chi_{f}$).
Here $f\colon S\to B$ is called a {\em fibered surface of genus $g$} or a {\em genus $g$ fibration} if $f$ is a surjective morphism from a smooth projective surface $S$ to a smooth projective curve $B$ whose general fiber is a smooth projective curve of genus $g$.
Let $K_{f}:=K_{S}-f^{*}K_{B}$ denote the relative canonical divisor of $f$ and consider two invariants $K_f^{2}$ and $\chi_{f}:=\deg f_{*}\O_{S}(K_{f})$.
The ratio $K_{f}^{2}/\chi_{f}$ is called the {\em slope} of $f$.
A {\em slope inequality} of relatively minimal fibered surfaces $f$ means an inequality of the form $K_{f}^{2}\ge a\chi_{f}$, where $a>0$ is a positive number which usually depends only on the geometric condition imposed on the general fibers.
In general, the well-known slope inequality 
\begin{equation} \label{usualslopeineq}
K_{f}^{2}\ge \frac{4(g-1)}{g}\chi_{f}
\end{equation}
holds for {\em any} relatively minimal fibered surface $f$ of genus $g\ge 2$ (\cite{Xia}, \cite{CoHa}).
However, to apply slope inequalities to the classification problem of surfaces of general type, sharper slope inequalities than \eqref{usualslopeineq} are needed under appropriate geometric conditions imposed on the general fibers.
To produce these slope inequalities, there are two different approaches.
The first one is to use the theory of algebraic surfaces (e.g., \cite{Xia}, \cite{Kon3}) and 
the second is to use the theory of moduli of curves (e.g., \cite{CoHa}, \cite{HaMo}).
On one hand, the advantage of using the surface theory is that it can handle degenerate fibers that are worse than semistable fibers.
However, this method relies on the geometric conditions imposed on  the general fibers and requires case-by-case studies.
On the other hand, using the moduli theory of curves provides some general results that do not depend on the conditions on the general fibers.
However, as long as using the moduli space of stable curves, 
only semistable fibers can be dealt with.
In this paper, we derive several slope inequalities for {\em general} fibered surfaces which allow to have non-semistable fibers.
To achieve this, we consider the moduli stacks of Gorenstein curves that are extensions of the moduli of stable curves and the intersection theory on them.

\subsection{Functorial divisors on Artin stacks}
One of the problem when using the extended moduli stacks is that they can be quite singular, may even fail to be separated, and may not admit a good moduli space.
Particularly, some tautological divisors defined by universal families may not be Cartier and so it may be careful to pullback by moduli maps.
Fortunately, certain tautological divisors computing numerical invariants of fibrations behave functorially with respect to flat pullbacks, alteration pushforwards and Gysin pullbacks.
Hence, instead of considering Cartier divisors, we introduce {\em functorial divisors} on Artin stacks.
Roughly speaking, a functorial divisor $D$ on an Artin stack $\mathcal{X}$ is a family of Weil divisors $\{D_{B}\}_{B}$ on an algebraic space $B$ over $\mathcal{X}$
which is compatible with flat pullbacks, alteration pushforwards and Gysin pullbacks (for the precise definition, see Definition~\ref{functorial}).
A functorial divisor $D$ determines a usual Weil divisor $D_{\mathcal{X}}$ by restricting it to the subfamily $\{D_{U}\}_{U}$ where $U$ is smooth over $\mathcal{X}$.
All Cartier divisors are regarded as functorial divisors.
Other important examples are as follows:
Let $\pi\colon \mathcal{Y}\to \mathcal{X}$ be a representable proper flat morphism of relative dimension $n$ between Artin stacks.
Then the pushforward by $\pi$ of the intersection of $n+1$ Cartier divisors $E_{1},\ldots, E_{n+1}$ on $\mathcal{Y}$ admits the structure of functorial divisors (Example~\ref{functdivex}~(2)).
A functorial divisor $D$ is called {\em effective} if $D_{B}$ is effective for any algebraic space $B$ over $\mathcal{X}$ such that each generic point of $B$ does not map to the support of $D_{\mathcal{X}}$.
The effectivity of $D$ clearly implies that the associated Weil divisor $D_{\mathcal{X}}$ is effective.
In this paper, we prove that the converse also holds:

\begin{theorem}[Theorem~\ref{effthm}] \label{Introeffthm}
Let $\mathcal{X}$ be a reduced and equidimensional Artin stack of finite type over a field $k$ that is regular in codimension one.
Then a functorial divisor $D$ on $\mathcal{X}$ is effective if and only if the associated Weil divisor $D_{\mathcal{X}}$ is effective.
\end{theorem}

The main idea to prove Theorem~\ref{Introeffthm} is to use the negativity lemma {\cite[Lemma~3.39]{KoMo}}.
Although Theorem~\ref{Introeffthm} is not necessarily essential for the slope inequalities discussed in this paper, the use of functorial divisors in the argument improves clarity.
It is expected that Theorem~\ref{Introeffthm} can be applied to other moduli stacks.

\subsection{Slope inequalities and the moduli of reduced curves}
In the rest of the section, we consider over the complex number field $\mathbb{C}$ for simplicity.
In this paper, we denote by $\mathcal{M}^{\mathrm{Gor}}_{g}$ (the normalization of) the moduli stack parametrizing smoothable, Gorenstein canonically polarized curves of genus $g\ge 2$.
This contains the moduli stack of stable curves $\overline{\mathcal{M}}_{g}$ as an open dense substack. 
The rational Picard group of $\overline{\mathcal{M}}_{g}$ is known to be freely generated by the Hodge bundle $\lambda$ and the boundary divisors $\delta_{0},\ldots,\delta_{\llcorner g/2 \lrcorner}$ when $g\ge 3$ (see \cite{ArCo}, \cite{Mor}).
Let $\kappa$ denote the first Morita--Mumford class on $\overline{\mathcal{M}}_{g}$,
which satisfies Noether's formula 
$$
12\lambda=\kappa+\sum_{i=0}^{\llcorner g/2 \lrcorner}\delta_{i}.
$$
By definition, these tautological divisors $\lambda$ and $\kappa$ are naturally extended as functorial divisors on $\mathcal{M}^{\mathrm{Gor}}_{g}$.
Let $f\colon S\to B$ be a relatively minimal fibered surface of genus $g$.
Then the relative canonical model $\overline{f}\colon \overline{S}\to B$ corresponds to a morphism $\rho\colon B\to \mathcal{M}^{\mathrm{Gor}}_{g}$.
Since the natural morphism $S\to \overline{S}$ is a minimal resolution of Du Val singularities, the invariants $K_{f}^{2}$ and $\chi_{f}$ coincide with $K_{\overline{f}}^{2}$ and $\chi_{\overline{f}}$, respectively.
Then we have 
$$
K_{f}^{2}=\deg \kappa_{B},\quad \chi_{f}=\deg \lambda_{B}.
$$
To establish slope inequalities, we want to show that $\kappa-a\lambda$ is effective as a functorial divisor for a suitable number $a>0$.
Even if $\kappa-a\lambda$ is effective on $\overline{\mathcal{M}}_{g}$, it is not immediately clear whether the divisor $\kappa_{B}-a\lambda_{B}$ is effective when the fibered surface $f\colon S\to B$ has non-semistable fibers.
However, applying Theorem~\ref{Introeffthm} reveals that it suffices to show the effectiveness of $\kappa-a\lambda$ as a Weil divisor on $\mathcal{M}^{\mathrm{Gor}}_{g}$.
For instance, this condition is satisfied if the complement $\mathcal{M}^{\mathrm{Gor}}_{g}\setminus \overline{\mathcal{M}}_{g}$ has codimension at least two. 
This illustrates a typical application of Theorem~\ref{Introeffthm}. 
However, determining whether this complement indeed has codimension at least two is difficult and not well understood, as it involves the deformation theory of non-reduced curves. 
As a test case, if one considers the open locus $\mathcal{M}^{\mathrm{lci+}}_{g}\subseteq \mathcal{M}^{\mathrm{Gor}}_{g}$ parametrizing reduced local complete intersection curves, the following holds:

\begin{theorem}[Theorem~\ref{codimbound}] \label{Introcodimbound}
The complement of $\overline{\mathcal{M}}_{g}$ in $\mathcal{M}^{\mathrm{lci+}}_{g}$ has no codimension one components.
\end{theorem}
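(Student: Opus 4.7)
The argument will proceed in three steps.

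First I would reduce to a singularity-theoretic statement. If $[C_0]\in \mathcal{M}^*_g$ and $C_0$ is a reduced \emph{nodal} curve, then very-ampleness of $\omega_{C_0}^{\otimes 3}$ implies that $\omega_{C_0}$ is ample; a routine check of the degrees of $\omega_{C_0}$ on each rational/elliptic component (giving $k-2>0$ and $k>0$ respectively, where $k$ is the number of intersection points with the rest of $C_0$) shows that this is equivalent to Deligne--Mumford stability, so $[C_0]\in \overline{\mathcal{M}}_g$. Consequently $\mathcal{M}^*_g\setminus \overline{\mathcal{M}}_g$ is contained in the closed substack $\mathcal{Z}\subset \mathcal{M}^*_g$ parametrizing curves with at least one non-nodal l.c.i.\ singularity, and it suffices to prove $\mathrm{codim}_{\mathcal{M}^*_g}\mathcal{Z}\ge 2$.

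Next, the problem becomes local via deformation theory. Since $C_0$ is a reduced l.c.i.\ curve one has $T^2_{C_0}=0$, so the deformation functor of $C_0$ is unobstructed and $\mathcal{M}^*_g$ is smooth at $[C_0]$. Moreover, because $C_0$ is smooth away from its finitely many singular points, the formal neighborhood admits a smooth morphism (after quotienting by $\mathrm{Aut}(C_0)$)
\[
(\mathcal{M}^*_g)^{\wedge}_{[C_0]} \longrightarrow \prod_{p\in \mathrm{Sing}(C_0)} \mathrm{Def}(C_0,p)
\]
onto the product of miniversal deformations of the singular germs. The pullback of $\mathcal{Z}$ under this map is $\bigcup_{p}\bigl(Z_p\times \prod_{q\neq p}\mathrm{Def}(C_0,q)\bigr)$, where the union ranges over the non-nodal singularities $p$ of $C_0$ and $Z_p\subset \mathrm{Def}(C_0,p)$ is the locus of deformations still carrying a non-nodal singularity in a neighborhood of $p$. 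It therefore suffices to show $\mathrm{codim}_{\mathrm{Def}(C_0,p)}Z_p\ge 2$ for every non-nodal isolated reduced l.c.i.\ curve germ $(C_0,p)$.

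This last bound is the singularity-theoretic crux and, I expect, the main obstacle. The discriminant $\Delta_p\subset \mathrm{Def}(C_0,p)$ is a Cartier divisor (the locus of singular deformations), and the claim reduces to showing that its generic point parametrizes a deformation whose only singularity is an ordinary node. For plane-curve singularities this is the classical Morsification theorem. In general I would use the fact that any reduced l.c.i.\ curve germ is smoothable, and admits a smoothing whose generic fiber has exactly $\delta(C_0,p)$ ordinary nodes; a transverse slice to $\Delta_p$ along such a Morsifying direction is then the versal deformation of a single node, identifying the generic point of $\Delta_p$ with the one-node stratum. Granted this, $Z_p\subsetneq \Delta_p$ is strictly smaller than a codimension-one closed subscheme, so $\mathrm{codim}(Z_p)\ge 2$; combining with the product decomposition above gives $\mathrm{codim}_{\mathcal{M}^*_g}\mathcal{Z}\ge 2$, completing the proof.
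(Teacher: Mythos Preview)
Your route is genuinely different from the paper's. You exploit smoothness of $\mathcal{M}^*_g$ (which does hold, since reduced l.c.i.\ curves have $T^2=0$) and reduce, via the local--global decomposition of deformations, to a purely local statement: for each non-nodal reduced l.c.i.\ curve germ $(C_0,p)$, the non-nodal locus $Z_p\subset\mathrm{Def}(C_0,p)$ has codimension $\ge 2$. The paper never invokes smoothness of $\mathcal{M}^*_g$ and never isolates such a local claim. Instead it takes an irreducible component $\Delta$ of the complement, passes to an equisingular open substack, normalizes the generic curve to obtain a morphism $\rho\colon\Delta\to\prod_i\mathcal{M}_{g_i,n_i}$ recording the pointed normalization, and bounds $\dim\Delta$ by the dimension of the target plus the dimension of the fiber of $\rho$. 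The fiber dimension is bounded by $\sum_x(\tau(x)-1)$, and the key singularity-theoretic input is the inequality $\tau(x)\le 2\delta(x)-r(x)+1$ for reduced complete intersection curve singularities, which the paper proves from scratch via local duality; combining gives $\dim\Delta\le 3g-3-\sum_x\delta(x)\le 3g-4$, with the borderline cusp case handled separately. This approach is more hands-on but is uniform in characteristic.

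Your third step, however, is incomplete as written. The existence of a $\delta$-nodal deformation gives a point $t_0$ in the miniversal base near which $\Delta_p$ is a normal-crossing divisor with $\delta$ smooth branches, each generically one-nodal by openness of versality. But this only shows that \emph{those} branches of $\Delta_p$ have one-nodal generic point; your ``transverse slice'' remark does not exclude a further irreducible component of $\Delta_p$ through the origin that misses $t_0$ entirely and could be generically non-nodal. What you actually need is that the discriminant of an isolated complete intersection singularity is \emph{irreducible} with one-nodal generic fiber. In characteristic $0$ this is a known theorem (see for instance Looijenga's monograph on isolated complete intersection singularities), not a consequence of the $\delta$-nodal Morsification alone, and you should cite it rather than the $\delta$-constant deformation. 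In positive characteristic the irreducibility of the discriminant and the existence of $\delta$-nodal deformations are both more delicate, and your sketch does not address this; the paper's Tjurina bound, by contrast, is established in arbitrary characteristic and sidesteps the question.
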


Combining Theorem~\ref{Introeffthm} with Theorem~\ref{Introcodimbound}, we can show the following precise slope inequality, which is a generalization of Moriwaki's slope inequality {\cite[Theorem~D]{Mor2}}:

\begin{theorem}[Theorem~\ref{Moriwakiineq}] \label{IntroMoriwakiineq}
Let $f\colon S\to B$ be a relatively minimal fibered surface of genus $g$.
Assume that all the fibers of the relative canonical model of $f$ are reduced.
Then we have
$$
K_{f}^{2}\ge \frac{4(g-1)}{g}\chi_{f}+\sum_{i=1}^{\llcorner g/2 \lrcorner}\frac{4i(g-i)-g}{g}\sum_{p\in B}\delta_{i}(f^{-1}(p)),
$$
where $\delta_{i}(f^{-1}(p))$ is the virtual number of type $i$ nodes in the fiber $f^{-1}(p)$, see Example~\ref{localinvex}~(4).
\end{theorem}


\subsection{Morsification conjecture and Moduli conjecture}

A classical theorem in Morse theory states that any smooth function on a smooth manifold can be perturbed slightly to become a Morse function. 
One may ask for an analogue of this statement in the category of complex manifolds and holomorphic maps.
To begin with, consider a complex surface equipped with a proper holomorphic fibration $f\colon S\to B$.
In this setting, the counterpart of a Morse function is given by a Lefschetz fibration, that is, a family of stable curves with at most one node in each fiber. 
The perturbation of a function corresponds to a {\em splitting deformation} of a fiber germ $f\colon S\to (p\in B)$ (Definition~\ref{splittingdef}).
Among singular fibers, those which cannot be further decomposed (called {\em atomic} fibers) by splitting deformations consist not only of stable fibers with a single node, but also of smooth multiple fibers (\cite{Tak}). 
The Morsification conjecture, proposed by Xiao and Reid, asserts that these exhaust all possible atomic fibers:
\begin{conjecture}[Conjecture~\ref{Morsconj}, Morsification conjecture]
A fiber germ $f^{-1}(p)$ is atomic if and only if it is a stable curve with one node or a multiple of a smooth curve.
\end{conjecture}
The if part is known but the only if part is only known for $g\le 6$ (\cite{Hor3}, \cite{TakIII}, \cite{Oku}).
A relatively minimal fiber germ $f\colon S\to (p\in B)$ {\em satisfies the Morsification conjecture} if it is splittable into stable fibers with one node and smooth multiple fibers by finitely many splitting deformations (Definition~\ref{splittable}).
By using Theorem~\ref{Introcodimbound}, any relatively minimal fiber germ whose canonical model has the reduced central fiber satisfies the Morsification conjecture (Proposition~\ref{splitcri}).

On the other hand, the moduli stack $\mathcal{M}^{\mathrm{Gor}}_{g}$ may have divisorial components of the complement of $\overline{\mathcal{M}}_{g}$.
If there is a divisorial component $\Delta\subseteq \mathcal{M}^{\mathrm{Gor}}_{g}\setminus \overline{\mathcal{M}}_{g}$, then a fiber germ $f\colon S\to (p\in B)$ whose central fiber corresponding to a general point of $\Delta$ can not be splittable into at least two singular fibers, that is, it is atomic (Remark~\ref{relMorsMod}~(4)).
So, the Morsification conjecture suggests that any codimension one component of $\mathcal{M}^{\mathrm{Gor}}_{g}\setminus \overline{\mathcal{M}}_{g}$ generically parametrizes smooth multiple curves appearing in fibers of relatively minimal fibered surfaces or singular curves that can not appear in fibers of the relative canonical models of fibered surfaces.
To connect the theory of fibered surfaces with the theory of moduli of curves, we propose the following conjecture regarding the moduli of curves:
\begin{conjecture}[Conjecture~\ref{moduliconj}, Moduli conjecture]
There exists a moduli stack $\mathcal{M}^{\star}_{g}$ of curves which contains all curves of genus $g$ appearing in fibers of the relative canonical model of fibered surfaces as geometric points such that any divisorial component of $\mathcal{M}^{\star}_{g}\setminus \overline{\mathcal{M}}_{g}$ generically parametrizes smooth multiple curves appearing in fibers of relatively minimal fibered surfaces.
\end{conjecture}

\subsection{Slope inequalities and divisors on the moduli stack}
The problem of the effectiveness of $\kappa-a\lambda$ is related to finding effective divisors $D$ on $\overline{\mathcal{M}}_{g}$ with small slope.
For an effective divisor $D$ on $\overline{\mathcal{M}}_{g}$ that contains no boundary divisors, the {\em slope} of $D$ is defined by $s_{D}:=a/b$, where we write 
$$
D=a\lambda-\sum_{i=0}^{\llcorner \frac{g}{2} \lrcorner}b_{i}\delta_{i}
$$
 in the Picard group of $\overline{\mathcal{M}}_{g}$ and $b$ is the minimum of $b_{i}$'s.
It follows that the $\Q$-divisor $\kappa-(12-s_{D})\lambda$ on $\overline{\mathcal{M}}_{g}$ can be written by a non-negative linear combination of $D$ and $\delta_{i}$'s.
In particular, it is effective as a Weil divisor on $\overline{\mathcal{M}}_{g}$.
Moreover, $\kappa-(12-s_{D})\lambda$ can be considered as a functorial divisor on $\mathcal{M}^{\mathrm{Gor}}_{g}$ the restriction of which on $\overline{\mathcal{M}}_{g}$ equals the above effective divisor.
This functorial divisor is referred as the {\em Horikawa divisor associated to $D$} and denoted by $H_{D}$ (Definition~\ref{efftautodef}).

A fiber germ or a fibered surface of genus $g$ is called {\em $D$-general} if its general fiber is not contained in $D$ as a moduli point.
For a relatively minimal $D$-general fiber germ $f\colon S\to (p\in B)$, the {\em Horikawa index} $H_{D}(f^{-1}(p))$ is defined by the degree of the divisor $(H_{D})_{B}$ (Example~\ref{localinvex}~(1)), where $(H_{D})_{B}$ is the trace of the functorial divisor $H_{D}$ at the moduli map $\rho\colon B\to \mathcal{M}^{\mathrm{Gor}}_{g}$ of the relative canonical model of $f$.
When $\mathcal{M}^{\mathrm{Gor}}_{g}\setminus \overline{\mathcal{M}}_{g}$ has divisorial components, it is not known whether $H_{D}$ is effective on $\mathcal{M}^{\mathrm{Gor}}_{g}$. 
In particular, it is unclear whether the Horikawa index $H_{D}(f^{-1}(p))$ is non-negative.
The main theorem in this paper says that the Horikawa index is non-negative when assuming the Morsification conjecture or the Moduli conjecture:

\begin{theorem}[Theorem~\ref{slopeeq}, Slope equality] \label{Introslopeeq}
Let $D$ be an effective divisor on $\overline{\mathcal{M}}_{g}$ which contains no boundary divisors $($or $D=\delta_{1}$ and $g=2$$)$ with $s_{D}\ge 4$.
Let $f\colon S\to (p\in B)$ be a relatively minimal $D$-general fiber germ of genus $g$
 and assume that it satisfies the Morsification conjecture or the Moduli conjecture holds true.
Then the Horikawa index $H_{D}(f^{-1}(p))$ is non-negative.
In particular, for any relatively minimal $D$-general fibered surface $f\colon S\to B$ of genus $g$ assuming that all fibers of $f$ satisfy the Morsification conjecture or the Moduli conjecture holds true, we have 
$$
K_{f}^{2}=(12-s_{D})\chi_{f}+\sum_{p\in B}H_{D}(f^{-1}(p))\ge (12-s_{D})\chi_{f}.
$$
\end{theorem}

This result is stronger than the slope inequality $K_{f}^{2}\ge (12-s_{D})\chi_{f}$, in that the difference from the lower bound of the slope is expressed as a sum of local contributions given by the Horikawa index. 
An identity of this form is referred to as a {\em slope equality} (see the survey \cite{AsKo2}).
As a corollary, the slope equality holds for $D$-general fibered surfaces whose relative canonical models have reduced fibers.

The proof of Theorem~\ref{Introslopeeq} reduces to showing that $H_{D}(f^{-1}(p))$ is non-negative for any smooth multiple fiber germ $f^{-1}(p)$.
In this case, the non-negativity of $H_{D}(f^{-1}(p))$ is shown by using semistable reduction.
The invariance of Horikawa indices under splitting deformations (Proposition~\ref{locinvsplit}) and  
the behavior of Horikawa indices under semistable reductions (Proposition~\ref{Horindcompare}) play an important role in the proof.

Analyzing the difference from the lower bound of the slope of fibered surfaces is important in the classification theory of surfaces of general type.
Indeed, motivated by classifying algebraic surfaces near the Noether line, Horikawa \cite{Hor2} introduced non-negative invariants $H(f^{-1}(p))$ for genus $2$ fiber germs $f^{-1}(p)$, and showed that the slope equality 
$$
K_{f}^{2}=2\chi_{f}+\sum_{p\in B}H(f^{-1}(p))
$$ 
holds for any relatively minimal fibered surface $f\colon S\to B$ of genus $2$.
The term ``Horikawa index'' for $H_{D}(f^{-1}(p))$ originates from this result.
Moreover, Horikawa \cite{Hor3} also showed that $H(f^{-1}(p))$ equals the number of type $1$ nodes on the stable fibers split from $f^{-1}(p)$.
Later, Reid \cite{Rei} established a similar result for non-hyperelliptic genus $3$ fibrations, and conjectured that the Horikawa indices can be decomposed into three types according to ``algebraically atomic'' fibers ({\cite[Conjecture~3.2]{Rei}}, {\cite[p.33, Conjecture]{AsKo}}).
Assuming the Morsification conjecture, we can show the following decomposition theorem of Horikawa indices:

\begin{theorem}[Corollary~\ref{Hordecomp}] \label{IntroHordecomp}
Let $D=\sum_{i}m_{i}D_i$ be the irreducible decomposition.
Let $f\colon S\to (p\in B)$ be a $D$-general fiber germ of genus $g$ that satisfies the Morsification conjecture.
Then, there exist finite non-negative numbers $n_1,\ldots, n_{l}$ depending only on $D$ such that $f^{-1}(p)$ is splittable by finitely many splitting deformations into fiber germs $\bm{f}_{t}^{-1}(p_{t})$ of the following types:

\smallskip

\noindent 
$(\mathrm{i})$
Smooth fiber germs which do not belong to $D$ as moduli points.
In this case, 
$$
H_{D}(\bm{f}_{t}^{-1}(p_{t}))=0.
$$

\smallskip

\noindent 
$(\mathrm{ii})$
Smooth fiber germs whose moduli maps meet the support of $D$ transversely at a smooth point of $D_i$.
In this case, 
$$
H_{D}(\bm{f}_{t}^{-1}(p_{t}))=\frac{m_{i}}{b}.
$$

\smallskip

\noindent 
$(\mathrm{iii})$
Stable fiber germs with one node of type $i$ which do not belong to $D$ as moduli points.
In this case, 
$$
H_{D}(\bm{f}_{t}^{-1}(p_{t}))=\frac{b_{i}-b}{b}.
$$

\smallskip

\noindent 
$(\mathrm{iv})$
$D$-indecomposable smooth multiple fiber germs of type $(m, n_{i})$ as in Definition~\ref{smmulttype}.
In this case, 
$$
H_{D}(\bm{f}_{t}^{-1}(p_{t}))=\frac{n_{i}}{mb}+\frac{s_{D}-4}{2}\left(1-\frac{1}{m} \right)(g-1).
$$
\end{theorem}

Since the Morsification conjecture is true for genus $g\le 6$,
Theorems~\ref{Introslopeeq} and \ref{IntroHordecomp} give an affirmative answer to Reid's conjecture (Example~\ref{genus2345}~(2)).

By Theorem~\ref{Introslopeeq}, to establish better slope inequalities for general fibered surfaces, it suffices to find an effective divisor $D$ with smaller slope $s_{D}$.
Motivated by the birational geometry of $\overline{\mathcal{M}}_{g}$, there have been many studies to find effective divisors with small slopes (e.g., \cite{HaMu}, \cite{EiHa}, \cite{Far}).
Calculating the slope of $D$ for the purpose of establishing the slope inequality seems somewhat tautological.
However, to compute the slope of $D$, it is sufficient to solve a system of equations regarding the coefficients of $D$, which is determined by finitely many specific semistable fibered surfaces.
Thus, the results of this paper suggest that {\em due to the existence and algebraicity of the moduli stack,
some numerical properties for fibrations can be determined by finitely many specific semistable fibrations}.
It is expected that similar techniques using moduli stacks can also be applied to study the modular invariants of families of other polarized varieties with certain properties (e.g.,  trigonal fibered surfaces, families of $\Q$-Gorenstein curves, polarized K3 fibrations, and so on).

If $g$ is odd and $D$ is the Brill--Noether locus parametrizing curves with non-maximal gonality, then an explicit presentation of $D$ in the Picard group of $\overline{\mathcal{M}}_{g}$ was given in \cite{HaMu}.
In this case, Theorem~\ref{Introslopeeq} recovers Konno's slope (in)equality {\cite[Theorem~4.1]{Kon}} under the assumption of the Morsification conjecture or the Moduli conjecture (Corollary~\ref{BNslopeineq}).
If $g$ is even and $D$ is the Gieseker--Petri locus parametrizing curves with pencils of degree $g/2+1$ which violates the Petri condition, then an explicit presentation of $D$ in the Picard group of $\overline{\mathcal{M}}_{g}$ was given in \cite{EiHa}, and Theorem~\ref{Introslopeeq} give a slope (in)equality for general fibered surfaces of even genus $g$ under the assumption that the Morsification conjecture or the Moduli conjecture (Corollary~\ref{GPslopeineq}).
Combining this with the recent result of Okuda \cite{Oku} showing the Morsification conjecture for $g=6$,
we establish the slope equality for general fibered surfaces of genus $6$:
\begin{corollary}[Corollary~\ref{g6slopeineq}] \label{Introg6slopeineq}
Let $f\colon S\to B$ be a relatively minimal fibered surface of genus $6$ such that  general fibers are tetragonal and any gonality pencil satisfies the Petri condition.
Then, we have
$$
K_{f}^{2}=\frac{25}{6}\chi_{f}+\sum_{p\in B}H_{GP_{6,4}^{1}}(f^{-1}(p)) \ge \frac{25}{6}\chi_{f}.
$$
\end{corollary}
This is already a new result even when restricted to the slope inequality.

\subsection{Topological aspects}
Let $S$ be a smooth projective surface. 
When regarded as a smooth $4$-dimensional real manifold, two important topological invariants of $S$ are the topological Euler characteristic $\chi_{\mathrm{top}}(S)$ and the signature $\Sign(S)$. 
Giving a rational function on $S$ roughly corresponds, via blow-ups and the Stein factorization, to giving a fibered surface $f\colon S\to B$.
Classical Morse theory shows that a Morse function on a manifold encodes rich topological information about the manifold. 
In the present setting, the Morsification conjecture predicts that $f$ can locally be morsified, with respect to the base curve $B$, by means of splitting deformations. 
It is therefore natural to ask whether the topological invariants of $S$ can be expressed as sums of local invariants associated with the fibers of $f$, in such a way that these local invariants behave additively under splitting deformations.

For example, the topological Euler contribution $\delta(f^{-1}(p))$ (Example~\ref{localinvex}~(3)) provides such a local invariant, yielding a decomposition of the relative topological Euler characteristic
$$
e_{f}:=\chi_{\mathrm{top}}(S)-\chi_{\mathrm{top}}(F)\chi_{\mathrm{top}}(B)=\sum_{p\in B}\delta(f^{-1}(p)),
$$
where $F$ is a general fiber of $f$.
On the other hand, the analogous problem for the signature has been extensively studied from both algebraic-geometric and topological viewpoints (see \cite{AsKo2}, \cite{Kun}). 
In general, however, local signatures localizing the signature cannot be constructed canonically, nor are they unique, except in special cases such as genus $g \le 2$ or hyperelliptic fibrations.

The method developed in this paper provides a different approach. 
Given an effective divisor $D$ on $\overline{\mathcal{M}}_{g}$ containing no boundary divisors, one can define a local signature $\sigma_{D}(f^{-1}(p))$ (Example~\ref{localinvex}~(2)) for $D$-general fiber germs $f^{-1}(p)$ as in the definition of the Horikawa index replacing $\kappa-(12-s_D)\lambda$ with $\kappa-8\lambda$, satisfying
$$
\Sign(S)=\sum_{p\in B}\sigma_{D}(f^{-1}(p))
$$
for any $D$-general fibered surface $f\colon S\to B$ of genus $g$.
Moreover, it is almost immediate from the definition that this local signature behaves additively under splitting deformations. 
The relation among the local signature, the Horikawa index, and the topological Euler characteristic is as follows:
$$
\sigma_{D}(f^{-1}(p))=\frac{1}{s_D}\left(4H_{D}(f^{-1}(p))-(s_D-4)\delta(f^{-1}(p)) \right).
$$
Thus, Theorem~\ref{IntroHordecomp} also yields the decomposition theorem for local signatures.

\subsection{Applications}
For a relatively minimal fiber germ $f\colon S\to (p\in B)$ of genus $g$, its {\em Chern invariants} $c_{1}^{2}(f^{-1}(p))$ and $\chi_{f^{-1}(p)}$ are introduced in \cite{TanII} as non-negative rational numbers measuring the non-semistability of $f^{-1}(p)$ (Definition~\ref{Cherndef}).
These invariants are completely determined by the topology of a fiber germ $f^{-1}(p)$ (Remark~\ref{Chernrem}).
The question posed by Lu and Tan {\cite[p.3395, Questions (3)]{LuTa}} asks whether $c_{1}^{2}(f^{-1}(p))\ge \chi_{f^{-1}(p)}$ holds.
As an unexpected application of Theorem~\ref{Introslopeeq}, we give an affirmative partial answer to this question:

\begin{theorem}[Theorem~\ref{LuTanQ}]  \label{IntroLuTanQ}
Let $f\colon S\to (p\in B)$ be a relatively minimal fiber germ of genus $g$.
Assume that it satisfies the Morsification conjecture or the Moduli conjecture holds true.
Then we have $c_{1}^2(f^{-1}(p))\ge \chi_{f^{-1}(p)}$.
\end{theorem}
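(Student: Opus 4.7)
The strategy is to upgrade the global slope inequality of Theorem~\ref{Introslopeineq} to a \emph{local} inequality at the fiber germ via the functorial divisor framework. Assume $f^{-1}(p)$ is non-semistable (otherwise the inequality is trivial, since then $c_{1}^{2}=\chi=0$). Globalize the germ to a relatively minimal fibered surface $\hat f\colon \hat S\to \hat B$ over a smooth projective curve $\hat B$, containing the given germ at some $\hat p\in \hat B$, and with every fiber of the relative canonical model of $\hat f$ reduced. The associated moduli map $\rho\colon \hat B\to \mathcal{M}^{*}_{g}$ sends $\hat p$ into the complement $\mathcal{M}^{*}_{g}\setminus\overline{\mathcal{M}}_{g}$.

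Choose an effective divisor $D$ on $\overline{\mathcal{M}}_{g}$, not containing the moduli point of the generic fiber of $\hat f$, having slope $s_{D}\le 11$; such divisors exist for every $g\ge 2$ (for instance from Brill--Noether theory or Harris--Mumford-type constructions, or by taking the effective tautological divisor realizing the classical slope bound, which satisfies $s\le 8+4/g$). The associated effective tautological divisor
\[
\kappa-(12-s_{D})\lambda=\tfrac{1}{b}D+\sum_{i}\left(\tfrac{b_{i}}{b}-1\right)\delta_{i}
\]
is a non-negative combination of $D$ and the boundary divisors, hence effective as a Weil divisor on $\overline{\mathcal{M}}_{g}$; by Theorem~\ref{Introcodimbound} it remains effective on $\mathcal{M}^{*}_{g}$, and by Theorem~\ref{Introeffthm} it is effective as a functorial divisor on $\mathcal{M}^{*}_{g}$. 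Its pullback $(\kappa-(12-s_{D})\lambda)_{\hat B}$ is therefore an effective Weil divisor on $\hat B$.

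At the point $\hat p$, since $\rho(\hat p)\notin\overline{\mathcal{M}}_{g}$, the boundary contributions $\mathrm{mult}_{\hat p}(\delta_{i,\hat B})$ all vanish, while $\mathrm{mult}_{\hat p}(D_{\hat B})\ge 0$. Effectiveness thus gives
\[
\mathrm{mult}_{\hat p}(\kappa_{\hat B})\ge (12-s_{D})\,\mathrm{mult}_{\hat p}(\lambda_{\hat B}).
\]
Identifying these multiplicities with Tan's local Chern invariants---$\mathrm{mult}_{\hat p}(\kappa_{\hat B})=c_{1}^{2}(f^{-1}(p))$ and $\mathrm{mult}_{\hat p}(\lambda_{\hat B})=\chi_{f^{-1}(p)}$, via the semistable-reduction definitions of \cite{TanII}---yields $c_{1}^{2}(f^{-1}(p))\ge (12-s_{D})\chi_{f^{-1}(p)}\ge \chi_{f^{-1}(p)}$, as required.

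The main obstacle is precisely the last identification: verifying that the functorial multiplicities of $\kappa$ and $\lambda$ at a non-semistable moduli point coincide with Tan's Chern invariants, which are defined via semistable base change and minimal resolution. Bridging the paper's moduli-stack functorial formalism with the differential-invariant definitions of \cite{TanII} is where the bulk of the technical work will concentrate. A subsidiary concern is arranging the globalization $\hat f$ so that no extraneous contributions at other fibers pollute the multiplicity calculation at $\hat p$; this should be manageable by standard deformation-theoretic constructions producing the required family with only one non-semistable fiber and reduced relative canonical model throughout.
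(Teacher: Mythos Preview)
Your proposed identification $\mathrm{mult}_{\hat p}(\kappa_{\hat B})=c_{1}^{2}(f^{-1}(p))$ and $\mathrm{mult}_{\hat p}(\lambda_{\hat B})=\chi_{f^{-1}(p)}$ is not merely technically difficult---it is false. In the paper's framework the Chern invariants are \emph{discrepancies} under semistable reduction, not multiplicities: Lemma~\ref{discrep} shows that $c_{1}^{2}(f^{-1}(p))$ is the coefficient of $p$ in $\kappa_{B}-\tfrac{1}{N}\varphi_{*}\kappa_{B'}$, where $B'\to B$ is the semistable base change and $\kappa_{B'}$ is computed via the moduli map of the \emph{semistable} model; similarly for $\chi$. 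The semistable contribution $\tfrac{1}{N}\varphi_{*}\kappa_{B'}$ at $p$ need not vanish, so your multiplicities overshoot the Chern invariants. Relatedly, your claim that $\mathrm{mult}_{\hat p}(\delta_{i,\hat B})=0$ because $\rho(\hat p)\notin\overline{\mathcal{M}}_{g}$ is ill-posed: the $\delta_{i}$ are only Weil divisors on $\mathcal{M}^{*}_{g}$, not functorial divisors, so there is no well-defined pullback $\delta_{i,\hat B}$. Only the combination $T_{D}=\kappa-(12-s_{D})\lambda$ is functorial, and its multiplicity at $\hat p$ is $\mathrm{Ind}_{D}(f^{-1}(p))$.

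Once one uses the correct relation (Proposition~\ref{Horindcompare}),
\[
\mathrm{Ind}_{D}(f^{-1}(p))=\tfrac{1}{N}\textstyle\sum_{q}\mathrm{Ind}_{D}(f'^{-1}(q))+c_{1}^{2}(f^{-1}(p))-(12-s_{D})\chi_{f^{-1}(p)},
\]
the non-negativity of the left side only yields $c_{1}^{2}\ge(12-s_{D})\chi$ when the semistable Horikawa indices $\mathrm{Ind}_{D}(f'^{-1}(q))$ vanish. For a divisor like $BN$ with $s_{D}\le 11$ this cannot be arranged: the $b_{i}$ are unequal, so any disconnecting node in the stable reduction forces $\mathrm{Ind}_{D}(f'^{-1}(q))>0$. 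The paper circumvents this by choosing $D_{\varepsilon}=a_{\varepsilon}\lambda-b_{\varepsilon}\delta$ \emph{ample} (hence movable away from the finitely many stable-reduction fibers, and with all $b_{i}$ equal), which forces $s_{D_{\varepsilon}}=11+\varepsilon>11$; one then gets $c_{1}^{2}\ge(1-\varepsilon)\chi$ and takes the limit $\varepsilon\to 0$. Your choice of $s_{D}\le 11$ goes in the wrong direction for this mechanism.
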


After completing this result, we learned from Cheng and Lu \cite{ChLu} that Lu--Tan's question has been answered affirmatively. 
Our method is completely different from theirs.
Indeed, in our proof, we apply Theorem~\ref{Introslopeeq} to general ample divisors $D$ of the form $D=a\lambda-b\delta$, and then use the Cornalba--Harris ampleness criterion {\cite[Theorem~(1.3)]{CoHa}} $s_{D}>11$.
It is somewhat surprising that the ampleness criterion $s_{D}>11$ corresponds to the slope inequality for the Chern invariants $c_{1}^2(f^{-1}(p))\ge \chi_{f^{-1}(p)}$.

Another application is in relation to the lower bound of the slope of effective divisors on $\overline{\mathcal{M}}_{g}$.
Let $s_{g}$ denote the infimum of the slopes $s_{D}$ of effective divisors $D$ on $\overline{\mathcal{M}}_{g}$ which contain no boundary divisors.
In \cite{CFM}, Chen conjectured that $\lim_{g\to \infty}s_{g}=0$ while Farkas and Morrison conjectured that $\lim_{g\to \infty}s_{g}=6$.
The following applications of Theorem~\ref{Introslopeeq} support the later conjecture:

\begin{theorem}[Theorem~\ref{slope4}] \label{Introslope4}
Assume that the Morsification conjecture or the Moduli conjecture holds true.
Let $D$ be an effective divisor on $\overline{\mathcal{M}}_{g}$ which contains no boundary divisors.
If $s_{D}\le 4$, then $D$ contains the locus of smooth curves with non-trivial automorphisms with fixed points.
\end{theorem}

\begin{theorem}[Theorem~\ref{slope6}] \label{Introslope6}
Let $D$ be an effective divisor on $\overline{\mathcal{M}}_{g}$ which contains no boundary divisors.
If $s_{D}< 6$, then $D$ contains the locus of smooth curves with automorphisms of reduced type as in Definition~\ref{reducedtypedef}.
\end{theorem}

These theorems are also proved by the same principle as above, namely, from the correspondence between the inequality for the slope $s_{D}$ and a Miyaoka--Yau type inequality giving an upper bound for the slope of the Chern invariants ({\cite[Theorem~3.4, Proposition~3.5]{TanII}}). 

The above three applications come from the positivity of Horikawa indices (Theorem~\ref{slopeeq}) and the formula of Horikawa indices under semistable reductions (Proposition~\ref{Horindcompare}).
This means that the Chern invariants of fiber germs are subject to  moduli theoretic effects although they are completely determined by the topological types of fiber germs.
It seems to suggest a phenomenon that can be called a {\em fibration version of the McKay correspondence}.



\subsection{Structure of the paper}
The present paper is organized as follows.
In Section~\ref{sec:Intersection theory on Artin stacks},
we introduce functorial divisors on Artin stacks and prove Theorem~\ref{Introeffthm}.
In Section~\ref{sec:Moduli of curves}, we introduce the various moduli stacks of Gorenstein canonically polarized curves used in this paper.
We also discuss the basepoint-freeness and very-ampleness of pluricanonical linear systems on Gorenstein canonically polarized curves, and prove Theorem~\ref{Introcodimbound}.
In Section~\ref{sec:Morsification conjecture}, we discuss the relationship between the Morsification conjecture and the moduli of curves, and propose the Moduli conjecture.
In Section~\ref{sec:Slope inequality of fibered surfaces},
we first introduce Horikawa divisors.
Using the Horikawa divisors on the moduli stack of curves,
 various slope (in)equalities are established (Theorem~\ref{Introslopeeq}).
It also offers a generalization of Moriwaki's slope inequality (Theorem~\ref{IntroMoriwakiineq}), the decomposition theorem of Horikawa indices (Theorem~\ref{IntroHordecomp}), an affirmative partial answer of the question posed by Lu--Tan (Theorem~\ref{IntroLuTanQ}) and applications to the lower bound of the slope of effective divisors on $\overline{\mathcal{M}}_{g}$ (Theorems~\ref{Introslope4} and \ref{Introslope6}).

\begin{acknowledgements}
The author would like to express his sincere gratitude to Kazuhiro Konno and Tadashi Ashikaga for many valuable discussions and their warm encouragement. 
He is also grateful to Kazuhiko Yamaki and Kentaro Mitsui for helpful conversations on the moduli stack of curves. 
The author thanks Jun Lu for bringing to his attention the preprint \cite{ChLu}, which gives an affirmative answer to the Lu--Tan question, and Takayuki Okuda for informing him of the preprint \cite{Oku}, which resolves the Morsification conjecture in genus $6$.
The author also thanks Gert-Martin Greuel and Gavril Farkas for answering questions.
He was partially supported by JSPS KAKENHI No.20K14297 and No.25K06926. 
\end{acknowledgements}

\section{Intersection theory on Artin stacks}
\label{sec:Intersection theory on Artin stacks}

In this section, we recall the intersection theory on Artin stacks (cf.\ \cite{Vis}, \cite{Kre}).
For basic notions for Artin stacks, see for example \cite{Sta}, \cite{Ols}. 
Note that for many properties $\mathcal{P}$ of schemes such as being reduced, normal, equidimensional, of finite type over a base scheme, an Artin stack $\mathcal{X}$ has the property $\mathcal{P}$ 
if and only if there exists a smooth surjective morphism $U\to \mathcal{X}$ from a scheme $U$ with the property $\mathcal{P}$. 
We call an Artin stack $\mathcal{X}$ {\em irreducible} (resp.\ {\em integral}) if the underlying topological space $|\mathcal{X}|$ is not the union of two proper closed subsets (resp.\ $\mathcal{X}$ is irreducible and reduced).
Throughout this section, we assume that $\mathcal{X}$ is an Artin stack of finite type over a base field $k$,
and all schemes and algebraic spaces are also assumed to be of finite type over $k$.

\subsection{Cycles and divisors}
For an integer $d$, a {\em $d$-cycle} $\sum_{i}n_i\mathcal{Z}_i$ on $\mathcal{X}$ is a finite formal linear combination of $d$-dimensional integral closed substacks $\mathcal{Z}_i$ of $\mathcal{X}$. 
For a detailed exposition on the dimension theory of Artin stacks, see for example \cite[Tag 0AFL, Tag 0DRE]{Sta}.
We denote by $Z_{d}(\mathcal{X})$ the free abelian group consisting of $d$-cycles on $\mathcal{X}$.
If $\mathcal{X}$ is reduced and of equidimension $n$, then {\em Weil divisors} on $\mathcal{X}$ are $(n-1)$-cycles on $\mathcal{X}$ and we denote the group of Weil divisors on $\mathcal{X}$ by 
$$
\WDiv(\mathcal{X}):=Z_{n-1}(\mathcal{X}).
$$
A {\em prime divisor} on $\mathcal{X}$ means an integral closed substack of $\mathcal{X}$ of codimension $1$.

A {\em rational function} $f$ on a reduced and equidimensional stack $\mathcal{X}$ is defined as a morphism $f\colon \mathcal{U}\to \mathbb{A}^1$, where $\mathcal{U}$ is an open dense substack of $\mathcal{X}$ and $\mathbb{A}^1$ denotes the affine line.
The set of rational functions on $\mathcal{X}$ forms a ring, specifically, a finite product of fields, denoted as $\mathrm{Rat}(\mathcal{X})$.
For an invertible rational function $f\in \mathrm{Rat}(\mathcal{X})^{*}$ and a prime divisor $\mathcal{Y}$ on $\mathcal{X}$, we define the {\em order of $f$ along $\mathcal{Y}$} as follows:
If $\mathcal{X}$ is irreducible, then we define
$$
\ord_{\mathcal{Y}}(f):=\ord_{\mathcal{Y}\times_{\mathcal{X}}U}(f\circ \pi),
$$
where $\pi\colon U\to \mathcal{X}$ is a smooth morphism from an integral scheme $U$ so that $\mathcal{Y}\times_{\mathcal{X}}U$ is a prime divisor on $U$
and $f \circ \pi$ is regarded as a rational function on $U$.
In general, let $\mathcal{X}_{i}$, $i=1,\ldots,l$ be the irreducible components of $\mathcal{X}$ containing $\mathcal{Y}$ and put
$$
\ord_{\mathcal{Y}}(f):=\sum_{i=1}^{l}\ord_{\mathcal{Y}}(f|_{\mathcal{X}_{i}}).
$$
The divisor 
$$
\mathrm{div}_{\mathcal{X}}(f):=\sum_{\text{$\mathcal{Y}$: prime divisor}}\ord_{\mathcal{Y}}(f)\mathcal{Y}
$$
is called the {\em principal divisor on $\mathcal{X}$ associated to $f$}.

For an integer $d$, we define the {\em group of rational equivalences of dimension $d$} by
$$
W_{d}(\mathcal{X}):=\bigoplus_{\mathcal{Z}\subseteq \mathcal{X}}\mathrm{Rat}(\mathcal{Z})^{*},
$$
where $\mathcal{Z}\subseteq \mathcal{X}$ runs through all integral closed substacks of dimension $d+1$.
We define a homomorphism 
$$
\partial\colon W_{d}(\mathcal{X})\to Z_{d}(\mathcal{X})
$$ 
as follows:
We denote by $(\mathcal{Z}, f)$ an element $f$ of the direct summand $\mathrm{Rat}(\mathcal{Z})^{*}$ of $W_{d}(\mathcal{X})$.
Then we put $\partial((\mathcal{Z}, f)):=\mathrm{div}_{\mathcal{Z}}(f)$ and regard it as a cycle on $\mathcal{X}$.
We define the {\em $d$-th naive Chow group of $\mathcal{X}$}
 by 
$$
A^{\circ}_{d}(\mathcal{X}):=Z_{d}(\mathcal{X})/\partial W_{d}(\mathcal{X}).
$$
When $\mathcal{X}$ is represented by an algebraic space $X$, we write $A_{d}(X)$ instead of $A^{\circ}_{d}(\mathcal{X})$.
In this paper, the $d$-th Chow group $A_{d}(\mathcal{X})$ introduced in \cite{Kre} is not used.

\begin{remark}
The group of cycles $Z_{*}(\mathcal{X})=\bigoplus_{d}Z_{d}(\mathcal{X})$ (resp.\ the group of rational equivalences $W_{*}(\mathcal{X})=\bigoplus_{d}W_{d}(\mathcal{X})$) can be regarded as the global section of the sheaf of cycles $\mathcal{Z}_{*}(\mathcal{X})$ (resp.\ the sheaf of rational equivalences $\mathcal{W}_{*}(\mathcal{X})$) on the lisse-\'{e}tale site of $\mathcal{X}$.
Namely, any cycle $\alpha=\sum_{i}n_i\mathcal{Z}_i$ on $\mathcal{X}$ has a presentation $$
\alpha=\{\alpha_{U}\}_{U\to \mathcal{X}},
$$
where $U\to \mathcal{X}$ runs through all smooth morphisms from schemes $U$ and
$\alpha_{U}=\sum_{i}n_i\mathcal{Z}_{i, U}$ is a cycle on $U$.
However, the same is not true for the naive Chow group $A^{\circ}_{*}(\mathcal{X})=\bigoplus_{d}A^{\circ}_{d}(\mathcal{X})$
since the Chow groups on the lisse-\'{e}tale site of $\mathcal{X}$ are not sheaves in general.
\end{remark}

Let $\mathcal{X}$ be a reduced stack of equidimension $n$.
A {\em Cartier divisor} on $\mathcal{X}$ is a global section of the sheaf $\mathcal{K}^{*}_{\mathcal{X}}/\O^{*}_{\mathcal{X}}$ on the lisse-\'{e}tale site of $\mathcal{X}$, where $\mathcal{K}_{\mathcal{X}}$ is the sheaf of rational functions: $\mathcal{K}_{\mathcal{X}}(U)=\mathrm{Rat}(U)$ for a smooth morphism $U\to \mathcal{X}$ from an algebraic space $U$.
It can be denoted by a data 
$$
\{(U_i, f_i)\}_{i}
$$
where $\{U_i\to \mathcal{X}\}_{i}$ is a smooth covering from algebraic spaces $U_i$ and $f_{i}$ is a rational function on $U_i$ such that $f_{i}/f_{j}|_{U_{ij}}$ is an invertible regular function on the algebraic space $U_{ij}$ representing $U_{i}\times_{\mathcal{X}}U_{j}$.
The group of Cartier divisors on $\mathcal{X}$ is denoted by 
$$
\CDiv(\mathcal{X}):=\Gamma(\mathcal{X}, \mathcal{K}^{*}_{\mathcal{X}}/\O^{*}_{\mathcal{X}}).
$$
For a Cartier divisor $D=\{(U_i, f_i)\}$, the principal divisors $\mathrm{div}_{U_{i}}(f_i)$ on $U_i$ form a global section of $\mathcal{Z}_{n-1}(\mathcal{X})$ and then corresponds to a Weil divisor on $\mathcal{X}$, which is denoted by $D^{\mathrm{W}}$.
This correspondence defines a natural homomorphism
$$
\CDiv(\mathcal{X})\to \WDiv(\mathcal{X});\quad D\mapsto D^{\mathrm{W}},
$$
which is injective when $\mathcal{X}$ is normal.

As in \cite[Definition~2.2.1]{Ful}, we define a {\em pseudo-divisor} on $\mathcal{X}$ as a triple 
$$
(L, Z, s)
$$
where
$L$ is an invertible $\O_{\mathcal{X}}$-module on the lisse-\'{e}tale site of $\mathcal{X}$, $Z$ is a closed subset of $|\mathcal{X}|$ and $s$ is an isomorphism $s\colon \O_{\mathcal{X}\setminus Z}\cong L|_{\mathcal{X}\setminus Z}$ over the complement of $Z$.
For a Cartier divisor $D=\{(U_i, f_i)\}$, the triple 
$$
(\O_{\mathcal{X}}(D), \Supp D, s_{D})
$$
forms a pseudo-divisor, where the sheaf $\O_{\mathcal{X}}(D)$ is defined by gluing the sheaf $\O_{U_{i}}f_{i}^{-1}$ on $U_i$ canonically and $s_{D}$ is the natural isomorphism defined by gluing the identity $\O_{U_{i}\setminus \Supp D}=\O_{U_{i}\setminus \Supp D}f_{i}^{-1}$.
The converse also holds:

\begin{lemma}[{\cite[Lemma~2.2]{Ful}}] \label{pdivCar}
Let $(L, Z, s)$ be a pseudo-divisor on $\mathcal{X}$ such that $Z$ is nowhere dense in $\mathcal{X}$.
Then there exists a Cartier divisor $D$ uniquely which represents $(L, Z, s)$, that is, $\Supp D\subseteq Z$ and there is an isomorphism $\O_{\mathcal{X}}(D)\cong L$ which sends $s_{D}$ to $s$.
\end{lemma}

\begin{proof}
We take a smooth covering $\{U_{\alpha}\to \mathcal{X}\}_{\alpha}$ from schemes $U_{\alpha}$ and trivializations $\varphi_{\alpha}\colon \O_{U_{\alpha}}\cong L|_{U_{\alpha}}$,
and we put $U_{\alpha_{0}}:=\mathcal{X}\setminus Z$ and $\varphi_{\alpha_{0}}:=s$.
The transition functions are isomorphisms 
$$
g_{\alpha \beta}:=\varphi_{\alpha}^{-1}\circ \varphi_{\beta}\colon \O_{U_{\alpha}}|_{U_{\alpha \beta}}\cong L|_{U_{\alpha \beta}} \cong\O_{U_{\beta}}|_{U_{\alpha \beta}},
$$
where $U_{\alpha \beta}$ is the algebraic space representing $U_{\alpha}\times_{\mathcal{X}}U_{\beta}$.
Note that $U_{\alpha \alpha_{0}}\subseteq U_{\alpha}$ is an open dense subscheme for each $\alpha$.
Let $f_{\alpha}:=g_{\alpha \alpha_{0}}\in \Gamma(\O_{U_{\alpha \alpha_{0}}}^{*})$ and regard it as a rational function on $U_{\alpha}$.
Since $f_{\alpha}/f_{\beta}=g_{\alpha \beta}$ for any $\alpha$ and $\beta$, it follows that $D:=\{(U_{\alpha}, f_{\alpha})\}_{\alpha}$ defines a Cartier divisor.
Since $L$ and $\O_{\mathcal{X}}(D)$ have the same transition functions $\{g_{\alpha \beta}\}$, there is a natural isomorphism $\O_{\mathcal{X}}(D)\cong L$ which sends $s_{D}$ to $s$.
\end{proof}

If $\mathcal{X}$ is regular, then any Weil divisor $D$ comes from a unique Cartier divisor, that is, $\CDiv(\mathcal{X})\cong \WDiv(\mathcal{X})$.
Indeed, the Weil divisor $D$ defines the invertible $\O_{\mathcal{X}}$-module $\O_{\mathcal{X}}(D)$ defined by $\O_{U}(D_{U})$ for any smooth morphism $U\to \mathcal{X}$ from a scheme $U$, 
and so defines a pseudo-divisor $(\O_{\mathcal{X}}(D), \Supp D, s_{D})$.
By Lemma~\ref{pdivCar}, it corresponds to a Cartier divisor.

For a morphism $f\colon \mathcal{Y}\to \mathcal{X}$ between Artin stacks and $D=(L, Z, s)$ a pseudo-divisor on $\mathcal{X}$,
we define the {\em pullback} $f^{*}D$ of $D$ by $f$ as the pseudo-divisor 
$$
(f^{*}L, f^{-1}(Z), f^{*}s)
$$
on $\mathcal{Y}$.
By using this, the pullback $f^{*}D$ of a Cartier divisor $D$ on $\mathcal{X}$ is uniquely determined if $f^{-1}(Z)$ is nowhere dense.
If $f^{-1}(Z)$ is not nowhere dense and $\mathcal{Y}$ has an open dense subscheme (e.g., $\mathcal{Y}$ is a Zariski locally quasi-separated algebraic space. cf.\ {\cite[Theorem~6.4.1]{Ols}}), the invertible sheaf $f^{*}L$ is trivial on an open dense subscheme and so determined as a Cartier divisor up to linear equivalence by the same proof of Lemma~\ref{pdivCar}.

\subsection{Pullback and pushforward of cycles}

Let $f\colon X\to Y$ be a proper (resp.\ flat) morphism of algebraic spaces.
For any cycle $\alpha$ on $X$ (resp.\ on $Y$), the {\em proper pushforward} $f_{*}\alpha$ (resp.\ {\em flat pullback} $f^{*}\alpha$) by $f$ is defined similarly as the scheme case (\cite[Definition~(3,6)]{Vis}).

Let $f\colon X\hookrightarrow Y$ be a regular closed immersion of codimension $c$ between algebraic spaces, that is, a closed immersion such that the conormal sheaf $\mathcal{I}_{X/Y}/\mathcal{I}_{X/Y}^{2}$ is a locally free sheaf of rank $c$ on $X$, where $\mathcal{I}_{X/Y}$ denotes the ideal sheaf of $X$ on $Y$.
Let $Y'\to Y$ be a morphism from an algebraic space $Y'$, and let $f'\colon X\times_{Y}Y'\hookrightarrow Y'$ be the base change of $f$.
Similarly to {\cite[Section~6.2]{Ful}}, the {\em Gysin pullback} $f^{!}\alpha$ by $f$ for a $d$-cycle $\alpha$ on $Y'$ is defined as follows:
We may assume that $\alpha=Z$ is an integral subspace of dimension $d$.
Then the normal cone
$$
C_{f'^{-1}(Z)/Z}:=\mathrm{Spec}_{f'^{-1}(Z)}\left( \bigoplus_{m\ge 0}\mathcal{I}_{f'^{-1}(Z)/Z}^{m}/\mathcal{I}_{f'^{-1}(Z)/Z}^{m+1}\right)
$$
is a closed subspace of the pullback of the normal bundle $N_{X/Y}=C_{X/Y}$ to $f'^{-1}(Z)$.
Then we define
$$
f^{!}\alpha:=s^{*}[C_{f'^{-1}(Z)/Z}],
$$
where $s^{*}$ is the Gysin pullback of the zero section 
$$
s^{*}\colon A_{d}(N_{X/Y}|_{f'^{-1}(Z)})\to A_{d-c}(f'^{-1}(Z)),
$$
that is, the inverse of the flat pullback $A_{d-c}(f'^{-1}(Z))\to A_{d}(N_{X/Y}|_{f'^{-1}(Z)})$ which is an isomorphism,
and we denote by $[C]$ the fundamental class of an algebraic space $C$.
Note that the cycle $f^{!}\alpha$ is determined up to rational equivalences on $f'^{-1}(\Supp \alpha)$.

A morphism $f\colon X\to Y$ between reduced and equidimensional algebraic spaces is called an {\em alteration}
if it is a dominant proper morphism and there exists an open dense subspace $U$ of $Y$ such that $f^{-1}(U)$ is dense in $X$ and the restriction map $f^{-1}(U)\to U$ is finite. 
If $X$ and $Y$ are integral schemes, then the definition of alterations coincides with the usual one.
An alteration $f\colon X\to Y$ is {\em of the same degree $d$}
if the restriction of $f$ over each irreducible component of $Y$ has degree $d$.

\subsection{Functorial divisors}
Let $\mathcal{X}$ be a reduced and equidimensional Artin stack of finite type over a field $k$. 
In this subsection, all schemes and algebraic spaces are assumed to be reduced, equidimensional, Zariski locally quasi-separated and of finite type over $k$, and
all divisors are considered with rational coefficients unless otherwise stated.

\begin{definition} \label{functorial}
A {\em functorial divisor} $D$ on $\mathcal{X}$ is a pair $D=(Z, \{D_{B}\}_{B})$ consisting of the following data:

\begin{enumerate}
\item
$Z$ is a nowhere dense closed subset $Z$ of $|\mathcal{X}|$ (automatically of codimension $1$), which is called the {\em support} of the functorial divisor $D$.

\item
For any morphism $\rho\colon B\to \mathcal{X}$ from an algebraic space $B$, 
a divisor class
$$
D_{B} \in A_{\dim B-1}(\rho^{-1}(Z))\otimes \Q
$$ is given, which is called the {\em trace of $D$ at $\rho$}, 
 and compatible with 
 alteration pushforwards, flat pullbacks and Gysin pullbacks.
 More precisely, the following three conditions hold:

\begin{enumerate}

\item
For any alteration morphism $\varphi\colon B'\to B$ over $\mathcal{X}$ of the same degree $d$,
$$
\varphi_{*}D_{B'}=dD_{B}.
$$

\item
For any flat morphism $\varphi\colon B'\to B$ over $\mathcal{X}$, 
$$
\varphi^{*}D_B=D_{B'}.
$$

\item
For any regular closed immersion $\varphi\colon B'\hookrightarrow B$ over $\mathcal{X}$,
$$
\varphi^{!}D_B=D_{B'}.
$$
\end{enumerate}

\end{enumerate}
Note that regular closed immersion in the condition (c) can be replaced by local complete intersection morphism.
A functorial divisor $D$ determines the usual Weil divisor $D_{\mathcal{X}}$ on $\mathcal{X}$
defined as the subfamily $\{D_{U}\}_{U}$ over the smooth morphisms $\rho\colon U\to \mathcal{X}$. 
Indeed, $\rho^{-1}(Z)\subseteq U$ is nowhere dense, and then it holds
$$
A_{\dim U-1}(\rho^{-1}(Z))=Z_{\dim U-1}(\rho^{-1}(Z)).
$$
By the same reason as above, $D_{B}$ is determined as a cycle on $B$ if any generic point of $B$ does not map into $Z$.
Note that the support of $D_{\mathcal{X}}$ is always contained in $Z$.

Two functorial divisors $D=(Z,\{D_{B}\}_{B})$ and $D'=(Z',\{D'_{B'}\}_{B})$ are {\em equivalent} if there exists a nowhere dense closed subset $Z''$ of $|\mathcal{X}|$ containing $Z$ and $Z'$ such that $D_{B}=D'_{B}$ holds in $A_{\dim B-1}(\rho^{-1}(Z''))\otimes \Q$ for each $\rho\colon B\to \mathcal{X}$.
The set of equivalence classes of functorial divisors on $\mathcal{X}$ forms an abelian group, which is denoted by $\mathrm{FDiv}_{\Q}(\mathcal{X})$.
There is a natural homomorphism 
$$
\mathrm{FDiv}_{\Q}(\mathcal{X})\to \WDiv(\mathcal{X})\otimes \Q;\quad D\mapsto D_{\mathcal{X}}.
$$

A functorial divisor $D=(Z, \{D_{B}\}_{B})$ on $\mathcal{X}$ is {\em effective} if $D_{B}$ is effective for any morphism $B\to \mathcal{X}$ which does not send any generic point of $B$ into $Z$.
\end{definition}

\begin{example} \label{functdivex}
(1) Cartier divisors on $\mathcal{X}$ are naturally regarded as functorial divisors:
Let $D$ be a Cartier divisor on $\mathcal{X}$.
For any morphism $\rho\colon B\to \mathcal{X}$ from an algebraic space $B$,
we consider the pullback $\rho^{*}D$ as a Cartier divisor and put $D_{B}:=(\rho^{*}D)^{\mathrm{W}}$ in $A_{\dim B-1}(\rho^{-1}(\Supp D))$.
Then $(\Supp D, \{D_{B}\}_{B})$ forms a functorial divisor.
This correspondence defines a natural homomorphism
$$
\CDiv(\mathcal{X})\otimes \Q \to \mathrm{FDiv}_{\Q}(\mathcal{X});\quad D\mapsto (\Supp D, \{D_{B}\}_{B}),
$$
which is injective if $\mathcal{X}$ is normal.

\smallskip

\noindent
(2) Let $\pi\colon \mathcal{Y}\to \mathcal{X}$ be a representable proper flat morphism of relative dimension $n$ between reduced and equidimensional Artin stacks.
Let $E_{1},\ldots,E_{n+1}$ be $n+1$ Cartier divisors on $\mathcal{Y}$ such that
the image $\pi(S)$ of the intersection $S:=\Supp E_{1}\cap \cdots \cap \Supp E_{n+1}$ of these supports is nowhere dense in $\mathcal{X}$.
Then we can define the intersection $E_{1}\cdot E_{2}\cdots E_{n+1}:=E_{1}\cdot E_{2}\cdots E_{n+1}\cdot [\mathcal{Y}]$ as a codimension $n+1$ cycle modulo rational equivalences on $S$, and obtain a Weil divisor 
$$
\pi_{*}(E_{1}\cdot E_{2}\cdots E_{n+1})
$$
 on $\mathcal{X}$ by taking proper pushforward (cf.\ \cite[(3.6)~Definition]{Vis}).
We can extend $\pi_{*}(E_{1}\cdot E_{2}\cdots E_{n+1})$ to a functorial divisor as follows:
Let $\rho\colon B\to \mathcal{X}$ be a morphism from an algebraic space $B$, and let 
$C$ be the algebraic space representing the fiber product $B\times_{\mathcal{X}}\mathcal{Y}$.
Let $\pi_{B}\colon C\to B$ and $\sigma\colon C\to \mathcal{Y}$ denote the natural projections.
We consider the pullbacks $E_{i,C}:=\sigma^{*}E_{i}$ as pseudo-divisors.
Then the intersection $E_{1, C}\cdot E_{2, C}\cdots E_{n+1, C}:=E_{1,C}\cdot E_{2,C}\cdots E_{n+1,C}\cdot [C]$ is defined as a cycle class in $A_{\dim C-n-1}(\sigma^{-1}(S))$.
Thus the proper pushforward $\pi_{B*}(E_{1, C}\cdot E_{2, C}\cdots E_{n+1, C})$ belongs to $A_{\dim B-1}(\rho^{-1}(\pi(S)))$.
It is easy to see that 
$$
(\rho^{-1}(\pi(S)), \{\pi_{B*}(E_{1, C}\cdot E_{2, C}\cdots E_{n+1, C})\}_{B})
$$
 forms a functorial divisor on $\mathcal{X}$ with the associated Weil divisor $\pi_{*}(E_{1}\cdot E_{2}\cdots E_{n+1})$.
The $n=0$ case is nothing but Example~\ref{functdivex}~(1).
\end{example}

The following is the main theorem in this section:
\begin{theorem} \label{effthm}
Assume that $\mathcal{X}$ is regular in codimension one.
Then, a functorial divisor $D$ on $\mathcal{X}$ is effective if and only if the associated Weil divisor $D_{\mathcal{X}}$ is effective.
\end{theorem}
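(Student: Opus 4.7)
The ``only if'' direction is immediate from the definition, so I focus on the converse: assume $D_{\mathcal{X}}$ is effective and fix $\rho\colon B\to \mathcal{X}$ whose generic points avoid $Z$; the goal is to verify $D_B\ge 0$ as a cycle on $B$.

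The plan is a sequence of reductions using the three compatibility axioms in turn. First, de Jong's theorem provides an alteration $\varphi\colon B'\to B$ of positive degree $d$ with $B'$ smooth; axiom (a) gives $\varphi_{*}D_{B'}=d\cdot D_B$, and since proper pushforward preserves effective cycles, it suffices to treat the case $B$ smooth. Now write $D_B=\sum_i c_i[P_i]$ with $P_i\subset \rho^{-1}(Z)$, and suppose for contradiction that $c_j<0$ for some $j$. At a general smooth point $b$ of $P_j$ avoiding the other $P_i$, local coordinates on $B$ cut out (in an étale neighborhood) a smooth curve germ $\iota\colon C\hookrightarrow B$ meeting $P_j$ transversally at $b$ and disjoint from the other $P_i$. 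Since $\iota$ is a regular closed immersion and $C$ meets each $P_i$ properly near $b$, axiom (c) together with the agreement of the refined Gysin product with the Cartier pullback in the proper case shows that the coefficient of $[b]$ in $D_C$ is exactly $c_j<0$.

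It remains to establish $D_C\ge 0$ for every smooth curve $C$ of finite type with $\rho(\eta_C)\notin Z$. Fix a smooth cover $\pi\colon U\to \mathcal{X}$ with $U$ a smooth scheme, so that $D_U=\pi^{*}D_{\mathcal{X}}$ is an effective Cartier divisor. For each $c\in \rho^{-1}(Z)\cap C$, the smooth surjection $W:=C\times_{\mathcal{X}}U\to C$ admits a section étale-locally around $c$; by axiom (b) I may replace $C$ by an étale neighborhood $C'\to C$ (preserving the coefficient at $c$) to obtain a lift $\sigma\colon C'\to U$ with $\sigma(\eta_{C'})\notin \Supp D_U$. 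Since $\sigma$ is l.c.i.\ between smooth algebraic spaces and meets $\Supp D_U$ properly, its refined pullback equals the Cartier pullback, and axiom (c) yields $D_{C'}=\sigma^{*}D_U\ge 0$, the desired contradiction.

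I expect the main obstacle to be the control of excess contributions in the refined Gysin products whenever the curves or lifts produced above fail to meet the relevant supports properly---for instance when the alteration $\varphi$ introduces exceptional divisors over deeper strata of $Z$, or when the section of $W\to C$ exists only after a further blow-up of $U$. The negativity lemma \cite[Lemma~3.39]{KoMo} of Koll\'ar--Mori is the decisive tool here: it forbids exceptional components from absorbing negative coefficients under proper pushforward, ensuring that the effectiveness of $D_{\mathcal{X}}$ witnessed on the smooth atlas genuinely propagates through the functoriality axioms to arbitrary $B\to \mathcal{X}$.
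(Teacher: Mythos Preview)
Your argument has a genuine gap at the point where you fix ``a smooth cover $\pi\colon U\to \mathcal{X}$ with $U$ a smooth scheme'' and then invoke axiom~(c) for the lift $\sigma\colon C'\to U$. A smooth morphism $U\to\mathcal{X}$ forces $U$ to inherit the singularities of $\mathcal{X}$; when $\mathcal{X}$ is singular there is \emph{no} atlas with $U$ regular. Without regularity of $U$, the map $\sigma\colon C'\to U$ from your smooth curve need not be l.c.i.\ (a closed point mapping to a singular point of $U$ is already a counterexample), so axiom~(c) does not apply and you cannot identify $D_{C'}$ with $\sigma^{!}D_{U}$. The same obstruction blocks the more direct route of factoring a regular $B$ through $U$ without ever slicing to curves.

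This is precisely the place where the negativity lemma enters in the paper, and not in the vague role you assign it in your final paragraph. The paper replaces $U$ by a regular alteration $U'\to U$; now $D_{U'}$ is no longer obviously effective, since the alteration may create exceptional divisors carrying unknown coefficients. Two ingredients close this: first, one shows (the paper's Step~1) that for any regular $B$ the divisor $D_{B}$ is numerically trivial on every curve contracted by $B\to\mathcal{X}$, using axioms~(b) and~(c) to compare with $D_{\Spec k'}=0$. In particular $-D_{U'}$ is nef over the Stein factorization $\overline{U}$ of $U'\to U$. Second, since $\overline{U}\to U$ is finite and $\overline{U}$ is normal, it is flat in codimension~$1$, so $D_{\overline{U}}$ is the flat pullback of the effective $D_{U}$ in codimension~$1$, hence effective. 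The negativity lemma then yields $D_{U'}\ge 0$. Once you have an effective Cartier divisor on a \emph{regular} $U'$, any regular alteration $B'\to B$ factoring through $U'$ gives an l.c.i.\ map $B'\to U'$, and effectivity of $D_{B'}$ follows.

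Your curve-slicing reduction is correct but superfluous: once the regular case is settled via $U'$, it applies directly to any regular $B'$ of arbitrary dimension. And your diagnosis of the ``main obstacle'' is off---the difficulty is not excess intersection along $Z$ (proper intersection can always be arranged, as you do), but the failure of the l.c.i.\ hypothesis over a singular atlas, which is only repaired by the alteration-plus-negativity-lemma step you have not carried out.
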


\begin{proof}
Let $D=(Z, \{D_{B}\}_{B})$ be a functorial divisor on $\mathcal{X}$ such that $D_{\mathcal{X}}$ is effective.

\noindent
{\bf Step~1.} First we show that $D_{B}$ is ``numerically trivial'' over $\mathcal{X}$
for any morphism $\rho\colon B\to \mathcal{X}$ from a regular scheme $B$.
Let $C$ be a proper integral curve on $B$ such that the restriction map $\rho|_{C}$ factors through the spectrum of a field $k'$: 
$$
\rho|_{C}\colon C\to \Spec k' \to \mathcal{X}.
$$
Let $C'$ be a normalization of $C$ and denote by $\psi\colon C'\to B$ the natural morphism.
Since $C'$ and $B$ is regular, $\psi$ is local complete intersection.
Then $D_{C'}=\psi^{!}D_{B}$ holds by Definition~\ref{functorial}~(b), (c).
On the other hand, since the composition $\gamma\colon C'\to C\to \Spec k'$ is flat, we also have $D_{C'}=\gamma^{*}D_{\Spec k'}=0$. 
Thus, $D_{B}\cdot C=\deg \psi^{!}D_{B}=0$.

\noindent
{\bf Step~2.} In this step, we show that $D_{U'}$ is effective for any smooth morphism $U\to \mathcal{X}$ from a scheme $U$ and any alteration $U'\to U$ from a regular scheme $U'$. 
Let 
$$
U'\xrightarrow{\psi} \overline{U}\xrightarrow{\gamma} U
$$
 denote the Stein factorization of $U'\to U$, where $\overline{U}$ is normal, $\psi$ is proper birational and $\gamma$ is finite.
 By the negativity lemma (cf.\ {\cite[Lemma~3.39]{KoMo}}) and Step~1, it suffices to show that $D_{\overline{U}}$ is effective.
Let $U^{\circ}$ be the maximal open subset of $U$ over which $\gamma$ is flat, and put $\overline{U}^{\circ}:=\gamma^{-1}(U^{\circ})$.
Note that $\gamma$ is flat in codimension $1$ since $\overline{U}$ is normal and $U$ is regular in codimension $1$ (cf.\ {\cite[Corollary~5.5]{KoMo}}).
Then $D_{\overline{U}^{\circ}}=\gamma^{*}D_{U^{\circ}}$ holds by Definition~\ref{functorial}~(b).
Since $D_{U^{\circ}}$ is effective by assumption, so is $D_{\overline{U}^{\circ}}$.
Since the complement of $\overline{U}^{\circ}$ in $\overline{U}$ has codimension greater than $1$, it follows that $D_{\overline{U}}$ is effective.

\noindent
{\bf Step~3.}
We show that $D_{B}$ is effective for any morphism $\rho\colon B\to \mathcal{X}$ from an algebraic space $B$ which does not send any generic point of $B$ into $Z$.
Since the effectivity can be checked smooth locally, we may assume that
$B$ is an affine scheme and there exists a smooth morphism $U\to \mathcal{X}$ from an affine scheme $U$ such that $\rho$ factors through $U$.
Moreover, we may assume that $B$ is irreducible since the effectivity can be checked 
on each irreducible component of the normalization of $B$.
We take an alteration $U'\to U$ from a regular scheme $U'$
and an alteration $\varphi\colon B'\to B$ from a regular integral scheme $B'$ such that
$B'\to B\to U$ factors through $U'$.
Note that such alterations always exist (see \cite{dJo}).
Then the factored morphism $\psi\colon B'\to U'$ is local complete intersection since $B'$ and $U'$ are regular.
By Step~2, $D_{U'}$ is effective.
By the definition of functorial divisors, we have
$$
 D_{B}=\frac{1}{\deg \varphi} \varphi_{*}D_{B'}=\frac{1}{\deg \varphi} \varphi_{*}\psi^{!}D_{U'}
$$
and hence it is effective.
\end{proof}

\section{Moduli of curves}
\label{sec:Moduli of curves}

\subsection{Curves}
In this subsection, we first define various properties of curves used in this paper (some of the terminology used here are non-standard).

\begin{definition} \label{curvedef}
Let $k$ be an algebraically closed field.
In this paper, a {\em curve} $C$ over $k$ means a proper $1$-dimensional scheme over $k$.
A Gorenstein curve $C$ over $k$ is {\em canonically polarized} (resp.\ {\em canonically quasi-polarized}) if the dualizing sheaf $\omega_C$ is ample (resp.\ nef, that is, $\deg \omega_{C}|_{C_i}\ge 0$ for each integral subcurve $C_{i}$ of $C$), {\em of genus $g$} if $h^{0}(\O_C)=1$ and $h^{0}(\omega_C)=g$, and {\em smoothable} if there exists a smoothing family $f\colon \mathcal{C}\to \Spec{R}$ (that is, a proper flat morphism with smooth generic fiber) over a divisorial valuation ring $R$ over $k$ such that the central fiber is isomorphic to $C$.
Furthermore, when $\mathcal{C}$ can be taken to have at most Du Val singularities (resp.\ to be regular), such $f$ is called a {\em good smoothing family} (resp.\ {\em very good smoothing family}) and $C$ is called {\em good smoothable} (resp.\ {\em very good smoothable}).
Note that when $C$ is reduced, then it is very good smoothable if and only if it has only plane curve singularities,
since any plane curve singularity has a one-parameter smoothing with a smooth total space and there are no local-to-global obstructions to deformations of the reduced curve ({\cite[Tag 0DZP]{Sta}}).

A Gorenstein curve $C$ of genus $g$ is {\em of minimal fiber type} (resp.\ {\em of canonical fiber type}) if it is Gorenstein, canonically quasi-polarized and very good smoothable (resp.\ Gorenstein, canonically polarized and good smoothable). 
Any fiber of relatively minimal (resp.\ relative canonical models of) fibered surfaces is of minimal fiber type (resp.\ of canonical fiber type).
\end{definition}

\begin{remark} \label{curvepropertyrem}
It is clear that good smoothable implies local complete intersection and that local complete intersection implies Gorenstein.
Any reduced and local complete intersection curve is smoothable ({\cite[Tag 0E7Y]{Sta}}).
\end{remark}

The following lemma is a generalization of {\cite[Theorem~(1.2)]{DeMu}} and {\cite[Proposition~4.3]{HaHy}} to canonically polarized Gorenstein curves:

\begin{lemma}\label{propertyofcurves}
Let $C$ be a Gorenstein canonically polarized curve of genus $g\ge 2$ over an algebraically closed field $k$.
Then the following hold.

\smallskip

\noindent
$\mathrm{(1)}$ 
$h^0(\omega_{C}^{\otimes n})=(2n-1)(g-1)$ and $h^{1}(\omega_{C}^{\otimes n})=0$ for $n\ge 2$.

\smallskip

\noindent
$\mathrm{(2)}$ 
$\omega_{C}^{\otimes n}$ is basepoint-free for $n\ge 2g$, and very ample for $n\ge 2g+1$.

\smallskip

\noindent
$\mathrm{(3)}$
If $C$ is good smoothable, then $\omega_{C}^{\otimes n}$ is basepoint-free for $n\ge 3$, and very ample for $n\ge 4$.
If, in addition, $C$ is not a scheme-theoretic multiple of multiplicity $2g-2$ of its reduction, then the same holds for $n\ge 2$ and $n\ge 3$, respectively.

\smallskip

\noindent
$\mathrm{(4)}$
If $C$ is very good smoothable, then $\omega_{C}^{\otimes n}$ is basepoint-free for $n\ge 2$, and very ample for $n\ge 3$.
If, in addition, $C$ is not a scheme-theoretic multiple of multiplicity $g-1$ of its reduction or has no elliptic tails, then $\omega_{C}^{\otimes 2}$ is very ample.
\end{lemma}

\begin{proof}
(1): From the Serre duality and the ampleness of $\omega_{C}$, we have 
$$
h^{1}(\omega_{C}^{\otimes n})=h^{0}(\omega_{C}^{\otimes 1-n})=0.
$$
It follows from the the Riemann--Roch theorem that
$$
h^{0}(\omega_{C}^{\otimes n})=\chi(\omega_{C}^{\otimes n})=\chi(\O_{C})+n\deg{\omega_{C}}=(2n-1)(g-1),
$$
where we use $\chi(\O_C)=1-g$ and $\deg{\omega_C}=2g-2$.

(2): 
The claim directly follows from {\cite[Theorem~1.1]{CFHR}}.
Indeed, the assumption of {\cite[Theorem~1.1]{CFHR}} is satisfied for $H=\omega_{C}^{\otimes n}$ since for any subcurve $B\subseteq C$, we have $p_a(B)=1-\chi(\O_{B})\le h^{1}(\O_{B})\le h^{1}(\O_{C})=g$ and $H\cdot B\ge n$.

(3), (4): 
Let $f\colon \mathcal{C}\to \Spec R$ be a good smoothing family of $C$, that is, a proper flat morphism with a smooth generic fiber over a divisorial valuation ring $R$ over $k$ such that the central fiber is isomorphic to $C$ and $\mathcal{C}$ has at most Du Val singularities.
Let $\omega_{f}$ denote the relative dualizing sheaf of $f$, and let $K_f$ be a canonical divisor on $\mathcal{C}$ with $\omega_{f}\cong \O_{\mathcal{C}}(K_f)$.
From (1) and the cohomology and base change theorem, to prove the claim for basepoint-freeness (resp.\ very-ampleness), it suffices to show that $\omega_{f}^{\otimes n}$ is $f$-free (resp.\ $f$-very ample).
To prove this, we use a Reider-type theorem established in \cite{Eno}:
Let $D$ be an $f$-nef and $f$-big Cartier divisor on $\mathcal{C}$ and $\zeta$ a $0$-dimensional subscheme of $\mathcal{C}$ (automatically contained in the central fiber $C$).
According to {\cite[Theorem~5.2]{Eno}}, there exists a positive number $\delta_{\zeta}$ depending only on the germ $\zeta\subseteq \mathcal{C}$ such that if $(D-B)\cdot B>\delta_{\zeta}/4$ holds for any $f$-vertical effective $1$-cycle $B$ on $\mathcal{C}$ intersecting $\zeta$, then the restriction map
$$
f_{*}\O_{\mathcal{C}}(K_f+D)\to f_{*}\O_{\mathcal{C}}(K_f+D)|_{\zeta}
$$
is surjective.
To prove the basepoint-freeness, it suffices to verify the assumption of this claim when $D=(n-1)K_f$ and $\zeta$ has length $1$.
In this situation, we may assume $\delta_{\zeta}\le 4$ using {\cite[Lemma~5.9]{Eno}}.
Let $B$ be any $f$-vertical effective $1$-cycle on $\mathcal{C}$ that intersects $\zeta$.
If $n\ge 3$, we have
$$
((n-1)K_f-B)\cdot B\ge n-1-B^2 >1\ge \frac{\delta_{\zeta}}{4},
$$
where the first inequality comes from the fact that $K_f$ is Cartier and $f$-ample
and the second inequality comes from $n\ge 3$ and $B^2\le 0$ due to the negative semi-definiteness of the intersection form on the fiber $C$.
Therefore, $\omega_{f}^{\otimes n}$ is $f$-free for $n\ge 3$.

For the very ample case, we consider $D=(n-1)K_f$ and either $\zeta$ is of length $2$ whose support contains a non-singular point of $C$ or $\zeta$ is defined by the square of the maximal ideal $\mathfrak{m}_{x}^{2}$ at a Du Val singularity $x$ in $C$.
Thus we may assume that $\delta_{\zeta}\le 8$ ({\cite[Remark~5.10]{Eno}}).
If $n\ge 4$, we have
$$
((n-1)K_f-B)\cdot B\ge n-1-B^2 >2\ge \frac{\delta_{\zeta}}{4},
$$
whence $\omega_{f}^{\otimes n}$ is $f$-very ample for $n\ge 4$.

If $\omega_{f}^{\otimes 2}$ is not $f$-free or $\omega_{f}^{\otimes 3}$ is not $f$-very ample, then we obtain $K_f\cdot B=1$ and $B^2=0$ from the above argument.
It follows that $C=mB$ for some $m\ge 1$.
Since $K_f\cdot C=2g-2$ and $K_{f}\cdot B=1$, we have $m=2g-2$ and $B$ is irreducible.
Note that if $\mathcal{C}$ is smooth, the above case does not occur since $K_{f}\cdot B+B^{2}=2p_a(B)-2$ is even.

If $\mathcal{C}$ is smooth and $\omega_{f}^{\otimes 2}$ is not $f$-very ample, then $K_{f}\cdot B-B^2=2$ from the above argument.
Thus, either $K_{f}\cdot B=2$ and $B^{2}=0$ or $K_{f}\cdot B=1$ and $B^{2}=-1$.
The former case implies that $C=(g-1)B$ with $p_a(B)=2$.
Note that $B$ is reduced and has at most two irreducible components.
The latter case implies that $B$ is an integral curve with $p_a(B)=1$ and $(C-B)\cdot B=1$, that is, $B$ is an elliptic tail.
\end{proof}

\subsection{Families and moduli of curves}
\begin{definition} \label{*curvedef}
Let $*$ be a property of curves (e.g., $*=$ Gorenstein, canonically polarized and smoothable of genus $g$).
A morphism of algebraic spaces $f\colon C\to B$ is called a {\em family of $*$-curves} if $f$ is a proper flat morphism of finite presentation and any geometric fiber of $f$ is a curve satisfying $*$.
Let $\mathcal{M}^{*}$ be the category fibered in groupoids over the category of schemes
whose objects are families of $*$-curves, which is called the {\em moduli stack of $*$-curves}.
It is also denoted by $\mathcal{M}^{*}_{g}$ if all $*$-curves are of genus $g$.

If the condition $*$ is empty ($*=\emptyset$), then the moduli stack $\mathcal{M}^{\emptyset}$ is an Artin stack (\cite{Hal}, {\cite[Tag 0D5A]{Sta}}).
If the property $*$ is an open condition for curves, that is, 
for any family of curves $f\colon C\to B$, the set
of points $p\in B$ such that the geometric fiber $f^{-1}(\overline{p})$ satisfies $*$ forms an open subset of $B$, then $\mathcal{M}^{*}$ is an Artin stack since it is an open substack of $\mathcal{M}^{\emptyset}$.
The following properties of curves are open conditions:
Cohen--Macaulay, Gorenstein, local complete intersection, stable, smooth, reduced, smoothable, canonically polarized, very ample $n$-th canonical system, of genus $g$.
\end{definition}

The following lemma ensures that base changes of families of Gorenstein canonically polarized curves behave well:

\begin{lemma} \label{relativedualizing}
Let $f\colon C\to B$ be a family of Gorenstein canonically polarized curves of genus $g$.
Then the following hold:

\smallskip

\noindent
$(1)$ $f_{*}\omega_{f}^{\otimes n}$ is locally free and compatible with base change for any $n\ge 1$.

\smallskip

\noindent
$(2)$ $R^{1}f_{*}\omega_{f}^{\otimes n}=0$ for $n\ge 2$ and $R^{1}f_{*}\omega_{f}\cong \O_B$.

\smallskip

\noindent
$(3)$ If any geometric fiber of $f$ has free $($resp.\ very ample$)$ $n$-th canonical linear system, then $\omega_{f}^{\otimes n}$ is $f$-free $($resp.\ $f$-very ample$)$.
\end{lemma}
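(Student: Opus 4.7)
The plan is to combine the pointwise cohomological information supplied by Lemma~\ref{propertyofcurves}~(1) with the cohomology-and-base-change theorem and relative Grothendieck--Serre duality for the Gorenstein morphism $f$ of pure relative dimension one.

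First, for part~(2) with $n\ge 2$, Lemma~\ref{propertyofcurves}~(1) gives $h^{1}(\omega_{C_s}^{\otimes n})=0$ on every geometric fiber, and since $H^{2}$ vanishes for dimension reasons, cohomology and base change yields $R^{1}f_{*}\omega_{C/S}^{\otimes n}=0$. For $n=1$, relative duality identifies $R^{1}f_{*}\omega_{C/S}$ with $\mathcal{H}om_{\O_S}(f_{*}\O_{C},\O_S)$; the constancy of $h^{0}(\O_{C_s})=1$ together with base change makes $f_{*}\O_{C}$ locally free of rank one, and the canonical map $\O_S\to f_{*}\O_{C}$ is an isomorphism on each fiber, hence an isomorphism. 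Dualizing gives $R^{1}f_{*}\omega_{C/S}\cong\O_S$.

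Part~(1) then follows formally from~(2). For $n\ge 2$, the vanishing of $R^{1}$ together with the constant fiberwise dimension $h^{0}(\omega_{C_s}^{\otimes n})=(2n-1)(g-1)$ forces $f_{*}\omega_{C/S}^{\otimes n}$ to be locally free of that rank and to commute with arbitrary base change. For $n=1$ the same conclusion is obtained by propagating base-change compatibility from the locally free $R^{1}f_{*}\omega_{C/S}\cong\O_S$ down to $f_{*}\omega_{C/S}$, once again via cohomology and base change.

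For part~(3), the base-change compatibility from part~(1) implies that the evaluation morphism $f^{*}f_{*}\omega_{C/S}^{\otimes n}\to \omega_{C/S}^{\otimes n}$ restricts on each geometric fiber $C_s$ to the standard evaluation $H^{0}(\omega_{C_s}^{\otimes n})\otimes_{k(s)}\O_{C_s}\to \omega_{C_s}^{\otimes n}$. If this is surjective on every fiber, Nakayama's lemma promotes it to a global surjection, so $\omega_{C/S}^{\otimes n}$ is $f$-free; if in addition the induced morphism is a closed embedding on every fiber, the fibral criterion for closed immersions into $\PP_S(f_{*}\omega_{C/S}^{\otimes n})$ gives $f$-very-ampleness. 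The only genuinely nontrivial step is the relative-duality argument handling the $n=1$ case of~(2); everything else is a mechanical application of cohomology and base change.
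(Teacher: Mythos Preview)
Your proof is correct and follows essentially the same route as the paper: cohomology and base change for parts~(1), (2) with $n\ge 2$, and part~(3), while the nontrivial $n=1$ case of~(2) is handled via relative duality. The only cosmetic difference is that the paper argues directly with the trace map $\mathrm{Tr}_f\colon R^1f_*\omega_{C/S}\to\O_S$ and checks it is an isomorphism fiberwise, whereas you invoke the duality isomorphism $R^1f_*\omega_{C/S}\cong(f_*\O_C)^\vee$ and separately establish $f_*\O_C\cong\O_S$; these are two packagings of the same Grothendieck--Serre duality argument, both relying on $h^0(\O_{C_s})=1$.
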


\begin{proof}
Note that the relative dualizing sheaf $\omega_{f}$ exists as an invertible sheaf on $C$ and compatible with base change since $f$ is Gorenstein.
We may assume that $B$ is a scheme.

The claims (1) and (2) for $n\ge 2$ follow from Lemma~\ref{propertyofcurves}~(1) and the cohomology and base change theorem (\cite[III, Theorem~12.11]{Har}.
Note that it holds without Noetherian assumption for proper morphisms of finite presentation by a standard limit argument. cf.\ \cite[Lemma~5.1.1]{Con}).
The claim (1) for $n=1$ also follows from the claim (2) for $n=1$ by the cohomology and base change theorem.
Now we show that the trace map 
$$
\mathrm{Tr}_{f}\colon R^{1}f_{*}\omega_{f}\to \O_B
$$
 (cf.\ \cite[Corollary~3.6.6]{Con}) is an isomorphism.
First note that the trace map $H^{1}(\omega_{f^{-1}(p)})\to k(p)$
on any fiber $f^{-1}(p)$ of $f$ is an isomorphism since $H^{0}(\O_{f^{-1}(p)})\cong k(p)$ (due to the definition of genus $g$) and the Serre duality.
Since the trace map is compatible with base change and $R^{2}f_{*}\omega_{f}=0$, it follows from Nakayama's lemma and the cohomology and base change theorem that $\mathrm{Tr}_{f}$ is surjective and $R^{1}f_{*}\omega_{f}$ is locally generated by one element.
Since any local generator of $R^{1}f_{*}\omega_{f}$ sends to a unit of $\O_B$, it follows that $\mathrm{Tr}_{f}$ is injective and so an isomorphism.

The claim (3) can easily seen by standard arguments using (1) and the cohomology and base change theorem.
\end{proof}

In this paper, we mainly use the following five moduli stacks of Gorenstein canonically polarized curves
$$
\mathcal{M}_{g}\subseteq \overline{\mathcal{M}}_{g} \subseteq \mathcal{M}^{\mathrm{lci+}}_{g} \subseteq \mathcal{M}^{\mathrm{lci}}_{g} \subseteq \mathcal{M}^{\mathrm{Gor}}_{g},
$$
which are defined as follows:

\begin{example} \label{*ex}
(1) Consider the condition
$$
*=\text{smooth projective curves of genus $g$ (resp.\ stable curves of genus $g$),}
$$
Then, the corresponding moduli stack $\mathcal{M}^{*}_{g}$ is usually denoted by $\mathcal{M}_{g}$ (resp.\ $\overline{\mathcal{M}}_{g}$).
It is well known that $\overline{\mathcal{M}}_{g}$ is a smooth Deligne--Mumford stack of relative dimension $3g-3$ over $\Z$ with projective course moduli space and $\mathcal{M}_{g}$ is an open dense substack of $\overline{\mathcal{M}}_{g}$ (\cite{DeMu}).

\smallskip

\noindent
(2) 
Consider the condition
$$
*=\text{Gorenstein (resp.\ local complete intersection), canonically polarized and of genus $g$,}
$$
Then, the corresponding moduli stack $\mathcal{M}^{*}_{g}$ is an Artin stack of finite type over $\Z$.
Indeed, similarly to $\overline{\mathcal{M}}_{g}$, Lemmas~\ref{propertyofcurves} and \ref{relativedualizing} ensure that this stack can be realized as a quotient stack of a locally closed subscheme of a suitable Hilbert scheme by the $\mathrm{PGL}$-action.
Let $\mathcal{M}^{\mathrm{Gor}}_{g}$ (resp.\ $\mathcal{M}^{\mathrm{lci}}_{g}$) be the normalization of the irreducible component of $\mathcal{M}^{*}_{g}$ containing $\mathcal{M}_{g}$.
By the universality of the normalization, for a normal scheme $B$, the groupoid $\mathcal{M}^{\mathrm{Gor}}_{g}(B)$ (resp.\ $\mathcal{M}^{\mathrm{lci}}_{g}(B)$) consists of all families of smoothable Gorenstein (resp.\ local complete intersection) canonically polarized curves of genus $g$ over $B$.
Note that for any fibered surface $f\colon S\to B$ of genus $g$, the relative canonical model $\overline{f}\colon \overline{S}\to B$ belongs to $\mathcal{M}^{\mathrm{lci}}_{g}$.
Note also that all moduli stacks currently known to appear in the Hassett--Keel program (\cite{Has}, \cite{HyLe}, \cite{HaHy}, \cite{AFSW}, \cite{Zha}, \cite{ADLW}) are contained in $\mathcal{M}^{\mathrm{lci}}_{g}$.

\smallskip
 
\noindent
(3)
Consider the condition
$$
*=\text{reduced,  local complete intersection, canonically polarized and of genus $g$.}
$$
Then, $\mathcal{M}^{\mathrm{lci+}}_{g}:=\mathcal{M}^{*}_{g}$ is a smooth Artin stack of finite type over $\Z$ containing $\overline{\mathcal{M}}_{g}$ as an open dense substack.
Indeed, the unobstructedness of deformations of reduced local complete intersection curves ({\cite[Tag 0DZX]{Sta}}) implies the smoothness of $\mathcal{M}^{\mathrm{lci+}}_{g}$.
We will show later that the codimension of the complement $\mathcal{M}^{\mathrm{lci+}}_{g}\setminus \overline{\mathcal{M}}_{g}$ is greater than one (Theorem~\ref{codimbound}). 
\end{example}

\begin{theorem} \label{codimbound}
The complement of $\overline{\mathcal{M}}_{g}$ in $\mathcal{M}^{\mathrm{lci+}}_{g}$ has no codimension one components.
\end{theorem}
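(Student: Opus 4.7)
The plan is to argue by contradiction: suppose $Z$ is an irreducible component of $\mathcal{M}^{*}_{g}\setminus\overline{\mathcal{M}}_{g}$ of codimension one, and let $[C]\in Z$ be a general point. Because the $n$-canonical system of $C$ is very ample, $\omega_{C}$ is ample, so $C$ is stable if and only if it is nodal. Since $[C]\notin\overline{\mathcal{M}}_{g}$, the curve $C$ must carry at least one non-nodal singularity $x_{0}$.

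The central computation is of $\dim T^{1}_{C}-\dim T^{0}_{C}$, where $T^{i}_{C}=\mathrm{Ext}^{i}(\Omega^{1}_{C},\mathcal{O}_{C})$. The local-to-global Ext spectral sequence on the l.c.i.\ curve $C$ gives
\[
\dim T^{1}_{C}-\dim T^{0}_{C}=-\chi(T_{C})+\sum_{x}\tau(x),
\]
where $T_{C}=\mathcal{H}om(\Omega^{1}_{C},\mathcal{O}_{C})$ and the sum is over the singular points. I would compute $\chi(T_{C})$ via the normalization $\nu\colon\widetilde{C}\to C$ using the exact sequence $0\to T_{C}\to \nu_{*}T_{\widetilde{C}}\to \mathcal{Q}\to 0$; an explicit local calculation at each singular point yields $\mathrm{length}(\mathcal{Q}_{x})=\delta(x)+r(x)-1$. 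Combined with the genus formula and Riemann--Roch on $\widetilde{C}$, this gives
\[
\chi(T_{C})=3-3g+\sum_{x}\bigl(2\delta(x)-r(x)+1\bigr),
\]
and hence
\[
\dim T^{1}_{C}-\dim T^{0}_{C}=3g-3-\sum_{x}\bigl[(2\delta(x)-r(x)+1)-\tau(x)\bigr]\le 3g-3,
\]
using Lemma~\ref{compintineq}. Since $\overline{\mathcal{M}}_{g}$ is open dense in $\mathcal{M}^{*}_{g}$ and of pure dimension $3g-3$, the same is true of $\mathcal{M}^{*}_{g}$, so $\dim_{[C]}\mathcal{M}^{*}_{g}=3g-3$. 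Combined with Lemma~\ref{dimformula}, this forces equality throughout, showing that $\mathcal{M}^{*}_{g}$ is smooth at $[C]$ and every singularity of $C$ saturates Lemma~\ref{compintineq}.

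To derive the contradiction I would then show that the tangent subspace $T_{Z,[C]}\subset T^{1}_{C}$ has codimension at least two, in contrast to the codimension one expected for a divisor inside the smooth stack $\mathcal{M}^{*}_{g}$ at $[C]$. Any first-order deformation of $C$ parametrised by $Z$ must preserve a non-nodal singularity near $x_{0}$; via the smooth morphism from the versal deformation of $C$ to the local versal deformation $V_{x_{0}}$, this condition pulls back from the non-nodal locus inside $V_{x_{0}}$. For a non-nodal l.c.i.\ curve singularity, the discriminant in $V_{x_{0}}$ is a reduced hypersurface whose generic point parametrises a single ordinary node, so the non-nodal locus is contained in the singular locus of this discriminant and therefore has codimension at least two in $V_{x_{0}}$. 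Pulling back yields $\dim Z\le 3g-5$, contradicting $\dim Z=3g-4$.

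The main obstacle will be the careful local analysis at the singularities: establishing the cokernel length formula $\mathrm{length}(\mathcal{Q}_{x})=\delta(x)+r(x)-1$ uniformly for all reduced l.c.i.\ curve singularities, and justifying the codimension two estimate for the non-nodal locus in the versal deformation. Both reduce to explicit computations with l.c.i.\ curve singularity germs, and in both places the decisive input is the Tjurina bound provided by Lemma~\ref{compintineq}.
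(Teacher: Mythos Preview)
Your overall strategy is sound and genuinely different from the paper's. The paper bounds the dimension of a putative component $\Delta$ directly: it passes to an equisingular open substack, builds a morphism $\rho\colon\Delta\to\prod_i\mathcal{M}_{g_i,n_i}$ via simultaneous normalization, bounds the fibre dimension by $\sum_x(\tau(x)-1)$, and combines this with Lemma~\ref{compintineq} to get $\dim\Delta\le 3g-3-\sum_x\delta(x)\le 3g-4$, with a separate refinement in the single-cusp case. Your route---show $\mathcal{M}^*_g$ is smooth at $[C]$ and then use that the non-nodal locus in the miniversal base of each local singularity has codimension $\ge 2$---is more conceptual; the paper's approach, by contrast, gives the sharper numerical estimate and is manifestly characteristic-free.

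There is, however, a concrete error in your computation. The cokernel formula $\mathrm{length}(\mathcal{Q}_x)=\delta(x)+r(x)-1$ is false in general: the correct value is $3\delta(x)-\tau(x)$, giving $\chi(T_C)=3-3g+\sum_x\tau(x)$. Indeed, your formula together with Lemma~\ref{dimformula} and $\dim\mathcal{M}^*_g=3g-3$ would force $\tau(x)=2\delta(x)-r(x)+1$ for \emph{every} reduced l.c.i.\ curve singularity, but this fails for any non--quasi-homogeneous plane curve germ. A cleaner derivation avoids the normalization entirely: dualizing $0\to\Omega^1_C/\mathrm{Tor}\to\omega_C\to Q'\to 0$ on the Gorenstein curve yields $0\to\omega_C^{-1}\to T_C\to\mathcal{E}xt^1(Q',\mathcal{O}_C)\to 0$, the last term of length $\tau(x)$ by local duality. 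With the correct formula your inequality becomes the \emph{identity} $\dim T^1_C-\dim T^0_C=3g-3$, so smoothness at $[C]$ follows without Lemma~\ref{compintineq} at all (equivalently, this is just unobstructedness from $T^2_C=0$ for l.c.i.\ curves). Two further cautions: the injection $T_C\hookrightarrow\nu_*T_{\widetilde{C}}$ rests on extending derivations to the normalization (Seidenberg), which fails in positive characteristic; and the codimension-two statement for the non-nodal locus in the miniversal base of an isolated complete intersection curve singularity, while standard over $\mathbb{C}$, deserves an argument or reference valid in characteristic $p$, since the theorem is stated over $\Spec\Z$.
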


\begin{proof}
By discussing each geometric fiber of $\mathcal{M}^{\mathrm{lci+}}_{g}\to \Spec \Z$, we may assume that $\mathcal{M}^{\mathrm{lci+}}_{g}$ is defined over an algebraically closed field.
Let $\Delta\subseteq \mathcal{M}^{\mathrm{lci+}}_{g}$ be a codimension one boundary component.
It suffices to show that $\Delta$ is the closure of $\delta_{i}$ for some $i$.
Let $[C]\in \Delta$ be a general point corresponding to a curve $C$.
Then, the tangent space of $\mathcal{M}^{\mathrm{lci+}}_{g}$ at $[C]$ can be regarded as the tangent space of the deformation functor of $C$.
Since $[C]\in \Delta$ is general, we may assume that any deformation of $C$ is a smoothing or an equisingular deformation along $\Delta$.
Since the local-to-global obstruction to deformations of $C$ vanishes ({\cite[Tag 0DZP]{Sta}}) and $\Delta$ is of codimension one, $C$ has a unique singularity $p\in C$ and the germ $[C]\in \Delta$ corresponds to equisingular deformations of the singularity $p$ that forms in codimension one in the deformation space of $p\in C$.
If $p\in C$ is not planer, then there exists a deformation to a singularity whose embedding dimension decreases by exactly one, obtained by perturbing one of the defining equations, which is a contradiction.
Thus, $p\in C$ is a plane curve singularity.
Moreover, perturbing the defining equation of $p\in C$, we can take a deformation to a node.
Thus, $p$ is already a node.
\end{proof}

\subsection{Curves of minimal fiber type}
In this subsection, we consider the base change property of families of curves of minimal fiber type.

\begin{lemma} \label{curvecanomodel}
Let $C$ be a Gorenstein curve of genus $g\ge 2$ of minimal fiber type over an algebraically closed field $k$.
Then, the canonical model 
$$
\overline{C}:=\Proj \oplus_{n\ge 0} H^{0}(\omega_{C}^{\otimes n})
$$
 is Gorentein of genus $g$ and of canonical fiber type satisfying $H^{i}(\omega_{C}^{\otimes n})\cong H^{i}(\omega_{\overline{C}}^{\otimes n})$ for each $i\ge 0$ and $n\ge 0$, and the natural morphism $C\to \overline{C}$ contracts all smooth rational curves on which $\omega_{C}$ is trivial.
In particular, $H^{i}(\omega_{C}^{\otimes n})=0$ for $i\ge 1$ and $n\ge 2$, or $i\ge 2$.
\end{lemma}

\begin{proof}
We take a very good smoothing family $f\colon \mathcal{C}\to \Spec R$ of $C$.
Let 
$$
\overline{f}\colon \overline{\mathcal{C}}:=\Proj \oplus_{n\ge 0}f_{*}\omega_{f}^{\otimes n}\to \Spec R
$$
 denote the relative canonical model of $f$.
We show that the central fiber $\overline{\mathcal{C}}_{0}=\overline{f}^{-1}(0)$ coincides with the canonical model of $C$.
Note that $\overline{\mathcal{C}}$ has at most Du Val singularities and the natural morphism $\pi\colon \mathcal{C}\to \overline{\mathcal{C}}$ is the minimal resolution.
Thus we have $R^{i}f_{*}\omega_{f}^{\otimes n}\cong R^{i}\overline{f}_{*}\omega_{\overline{f}}^{\otimes n}$ for any $n\ge 0$ and $i\ge 0$.
Since $R^{i}\overline{f}_{*}\omega_{\overline{f}}^{\otimes n}$ is locally free by Lemma~\ref{relativedualizing}, 
we obtain 
$$
H^{i}(\omega_{\overline{\mathcal{C}}_{0}}^{\otimes n})\cong R^{i}\overline{f}_{*}\omega_{\overline{f}}^{\otimes n}\otimes \kappa(0) \cong R^{i}f_{*}\omega_{f}^{\otimes n}\otimes \kappa(0)\cong H^{i}(\omega_{C}^{\otimes n})
$$
by the cohomology and base change theorem, whence the claim holds.
\end{proof}

Lemma~\ref{propertyofcurves} and Lemma~\ref{curvecanomodel} immediately imply the following:

\begin{corollary}
Let $C$ be a Gorenstein curve of genus $g\ge 2$ of minimal fiber type over an algebraically closed field $k$.
Then the following hold.

\smallskip

\noindent
$\mathrm{(1)}$ $h^0(\omega_{C}^{\otimes n})=(2n-1)(g-1)$ and $h^{1}(\omega_{C}^{\otimes n})=0$ for $n\ge 2$.

\smallskip

\noindent
$\mathrm{(2)}$ The $n$-th canonical linear system $|\omega_{C}^{\otimes n}|$ is basepoint-free for $n\ge 3$ and defines a birational map onto image for $n\ge 4$.

\smallskip

\noindent
$\mathrm{(3)}$ If the canonical model of $C$ is not a scheme-theoretic multiple of multiplicity $2g-2$ of its reduction, then $|\omega_{C}^{\otimes n}|$ is basepoint-free for $n\ge 2$ and defines a birational map onto image for $n\ge 3$.
\end{corollary}

\begin{lemma} \label{relcanocomp}
Let $f\colon C\to B$ be a family of Gorenstein curves of genus $g\ge 2$ of minimal fiber type over an algebraic space $B$.
Let 
$$
\overline{f}\colon \overline{C}:=\Proj_{B}\oplus_{n\ge 0}f_{*}\omega_{f}^{\otimes n}\to B
$$
 denote the relative canonical model of $f$.
Then, $\overline{f}$ is a family of Gorenstein curves of genus $g$ of canonical fiber type and the base change $\overline{C}\times_{B}B'\to B'$ of $\overline{f}$ for any morphism $B'\to B$ coincides with the relative canonical model of $C\times_{B}B'\to B'$.
\end{lemma}

\begin{proof}
We may assume that $B$ is a scheme.
By the cohomology and base change theorem and Lemma~\ref{curvecanomodel}, it suffices to show that
$$
f_{*}\omega_{f}^{\otimes n}\otimes \kappa(p)\cong H^{0}(\omega_{f^{-1}(p)}^{\otimes n})
$$
 for any $n\ge 0$ and any point $p\in B$.
By using the cohomology and base change theorem repeatedly, we have 
$$
R^{i}f_{*}\omega_{f}^{\otimes n}\otimes \kappa(p) \cong H^{i}(\omega_{f^{-1}(p)}^{\otimes n})=0
$$
 for $i\ge 2$.
Hence $R^{i}f_{*}\omega_{f}^{\otimes n}=0$ for $i\ge 2$ and $R^{1}f_{*}\omega_{f}^{\otimes n}\otimes \kappa(p) \cong H^{1}(\omega_{f^{-1}(p)}^{\otimes n})$.
In the case that $n\ge 2$, it follows from Lemma~\ref{curvecanomodel} that $H^{1}(\omega_{f^{-1}(p)}^{\otimes n})=0$.
Thus $R^{1}f_{*}\omega_{f}^{\otimes n}=0$ and hence
we have $f_{*}\omega_{f}^{\otimes n}\otimes \kappa(p) \cong H^{0}(\omega_{f^{-1}(p)}^{\otimes n})$.
For $n=1$, then the trace map $R^{1}f_{*}\omega_{f}\to \O_{B}$ is an isomorphism as in the proof of Lemma~\ref{relativedualizing}.
Thus we have $f_{*}\omega_{f}\otimes \kappa(p) \cong H^{0}(\omega_{f^{-1}(p)})$ by using the cohomology and base change theorem again.
\end{proof}

\section{Morsification conjecture}
\label{sec:Morsification conjecture}

In this section, we work over an algebraically closed field $k$ 
and consider relatively minimal fibrations $f\colon S\to B$ of genus $g$ over a curve $B$ that is not necessarily complete, that is, $f$ is a proper surjective morphism from a smooth surface $S$ to a smooth curve $B$ whose general fiber is a smooth projective curve of genus $g$ such that $K_{S}$ is relatively nef over $B$.
We denote by $f\colon S\to (p\in B)$ or simply $f^{-1}(p)$ the fiber germ of $f$ at a closed point $p$ of $B$.
In this paper, all fiber germs are considered \'{e}tale locally on the base $p\in B$.
Note that it is also possible to treat fiber germs in the category of complex analytic spaces.
In this case, it is necessary to replace the moduli stacks of curves by their analytifications.

\subsection{Morsification conjecture}

\begin{definition}[Splitting deformation/family, atomic fibers] \label{splittingdef}
Let $f\colon S\to (p\in B)$ be a relatively minimal fiber germ of genus $g$.
We consider a one-parameter family 
\begin{equation} \nonumber 
\mathcal{S}\xrightarrow{\bm{f}} (p\in \mathcal{B}) \xrightarrow{h} (0\in T)
\end{equation}
 of relatively minimal fibrations with the central fiber germ $\bm{f}_{0}\colon \mathcal{S}_{0}\to (p\in \mathcal{B}_{0})$ at $0\in T$ is isomorphic to $f\colon S\to (p\in B)$,
where $\bm{f}\colon \mathcal{S}\to \mathcal{B}$ is a family of curves over a smooth surface $\mathcal{B}$ and $h\colon \mathcal{B}\to T$ is a smooth morphism to a smooth curve $T$ with $h(p)=0$. 
We call this a {\em splitting family} of $f^{-1}(p)$.
The base change $\bm{f}_{t}\colon \mathcal{S}_{t}\to \mathcal{B}_{t}$ of $\bm{f}$ at $t\in T$ is called a {\em splitting deformation} of $f^{-1}(p)$.
Note that this is not a fiber germ and may have many singular fibers.
A splitting family is called {\em proper} if $\bm{f}_{t}\colon \mathcal{S}_{t}\to \mathcal{B}_{t}$ has at least two singular fibers for each $t\neq 0$ sufficiently near $0$.
We say that a fiber germ $f^{-1}(p)$ is {\em atomic} if there is no proper splitting family of $f^{-1}(p)$.
\end{definition}

The following is a famous conjecture due to Xiao and Reid (cf.\ \cite{Rei}):

\begin{conjecture}[Morsification conjecture] \label{Morsconj}
In characteristic $0$ or the complex analytic setting,
a fiber germ $f^{-1}(p)$ is atomic if and only if it is a stable curve with one node or a multiple of a smooth curve.
\end{conjecture}

\begin{remark}
(1) The if part in Conjecture~\ref{Morsconj} is known ({\cite[Theorem~2.0.2]{Tak}}) and the only if part is known for $g\le 6$ in the complex analytic setting (\cite{Hor3}, \cite{TakIII}, \cite{Oku}).

\smallskip

\noindent
(2) If an algebraic fiber germ $f^{-1}(p)$ defined over $\mathbb{C}$ is atomic in the complex analytic setting, then it is atomic in the algebraic setting.
By this and the Lefschetz principle, the if part in Conjecture~\ref{Morsconj} holds true over any algebraically closed field of characteristic $0$.
\end{remark}

\begin{definition} \label{splittable}
We consider a set $\mathcal{A}$ of isomorphism classes of relatively minimal fiber germs of genus $g$ including all smooth fiber germs of genus $g$.
Then, a relatively minimal fiber germ $f\colon S\to (p\in B)$ is {\em splittable into fibers in $\mathcal{A}$} if there exists a splitting family $\mathcal{S}\xrightarrow{\bm{f}} (p\in \mathcal{B})\xrightarrow{h} (0\in T)$ of $f$ such that for any non-zero $t\in T$ sufficiently near $0$ and any closed point $q\in \mathcal{B}_{t}$, the fiber germ $\bm{f}_{t}\colon \mathcal{S}_{t}\to (q\in \mathcal{B}_{t})$ belongs to $\mathcal{A}$.
Let $\mathcal{A}_{1}$ denote the set of isomorphism classes of relatively minimal fiber germs of genus $g$ which is splittable into fibers in $\mathcal{A}$.
Clearly we have $\mathcal{A}\subseteq \mathcal{A}_{1}$.
For a positive integer $n$, we inductively define 
$$
\mathcal{A}_{0}:=\mathcal{A}, \quad
\mathcal{A}_{n}:=(\mathcal{A}_{n-1})_{1}, \quad
\mathcal{A}_{\infty}:=\cup_{n\ge 0}\mathcal{A}_{n}.
$$
A fiber germ $f^{-1}(p)$ is said to be {\em splittable into fibers in $\mathcal{A}$ by finitely many splitting deformations} if $f^{-1}(p)$ belongs to $\mathcal{A}_{\infty}$.

Using these notions, Conjecture~\ref{Morsconj} is equivalent to the following:
Let $\mathcal{A}^{\mathrm{atm}}$ be the set of isomorphism classes of stable fiber germs with at most one node and smooth multiple fiber germs.
Then $\mathcal{A}^{\mathrm{atm}}_{\infty}$ consists of all isomorphism classes of relatively minimal fiber germs.

A relatively minimal fiber germ $f\colon S\to (p\in B)$ {\em satisfies the Morsification conjecture} if it belongs to $\mathcal{A}^{\mathrm{atm}}_{\infty}$.
\end{definition}

Let $\overline{f}\colon \overline{S}\to (p\in B)$ be the relative canonical model of a fiber germ $f\colon S\to (p\in B)$.
We can also consider the splitting family 
$$
\overline{\mathcal{S}}\xrightarrow{\overline{\bm{f}}} (p\in \mathcal{B}) \xrightarrow{h} (0\in T)
$$
 of $\overline{f}$ by the same definition of Definition~\ref{splittingdef}. 
Note that the fiber $\overline{\mathcal{S}}_{t}$ has at most Du Val singularities for any $t\in T$ sufficiently near $0$.

\begin{lemma} \label{twosplitfam}
Let $f\colon S\to (p\in B)$ be a relatively minimal fiber germ of genus $g\ge 2$, and let $\overline{f}\colon \overline{S} \to (p\in B)$ be the relative canonical model of $f$.

\smallskip

\noindent
$(1)$ Let $\mathcal{S}\xrightarrow{\bm{f}} (p\in \mathcal{B})\xrightarrow{h} (0\in T)$ be a splitting family of $f^{-1}(p)$.
Let $\overline{\bm{f}}\colon \overline{\mathcal{S}}\to (p\in \mathcal{B})$ denote the relative canonical model of $\bm{f}$.
Then, the composition 
$$
\overline{\mathcal{S}}\xrightarrow{\overline{\bm{f}}} (p\in \mathcal{B}) \xrightarrow{h} (0\in T)
$$ 
is a splitting family of $\overline{f}^{-1}(p)$.

\smallskip

\noindent
$(2)$ Let $\overline{\mathcal{S}}\xrightarrow{\overline{\bm{f}}} (p\in \mathcal{B}) \xrightarrow{h} (0\in T)$ be a splitting family of $\overline{f}^{-1}(p)$.
Then, there exist a finite morphism $(0'\in T')\to (0\in T)$  and a simultaneous resolution $\pi\colon \mathcal{S'}\to \overline{\mathcal{S'}}:=\overline{\mathcal{S}}\times_{T}T'$ over $T'$ such that the composition 
$$
\mathcal{S'}\xrightarrow{\overline{\bm{f'}} \circ \pi} (p' \in \mathcal{B'}) \xrightarrow{h'} (0'\in T')
$$
 of $\pi$ and the base change of $h\circ \overline{\bm{f}}$ is a splitting family of $f^{-1}(p)$.
\end{lemma}

\begin{proof}
The claim (1) follows from Lemma~\ref{relcanocomp}.
The claim (2) follows from the existence of simultaneous minimal resolutions of Du Val singularities (cf.\ {\cite[Theorem~4.28]{KoMo}}).
\end{proof}

\subsection{Moduli conjecture}
We consider relations between the moduli stacks of curves and splitting deformations.

\begin{proposition} \label{splitcri}
Any relatively minimal fiber germ whose canonical model has reduced fibers is splittable into stable fibers with at most one node.
In particular, it satisfies the Morsification conjecture.
\end{proposition}

\begin{proof}
Let $f\colon S\to (p\in B)$ be a relatively minimal fiber germ of genus $g$ with the relative canonical model $\overline{f}\colon \overline{S}\to (p\in B)$.
Assume that $\overline{f}^{-1}(p)$ is reduced.
We consider the moduli stack $\mathcal{M}_{g}^{\mathrm{lci+}}$ introduced in Example~\ref{*ex}~(3) over the base field $k$.
Thus, $\overline{f}$ belongs to $\mathcal{M}_{g}^{\mathrm{lci+}}$ and
we denote by $\rho_{\overline{f}}\colon B\to \mathcal{M}_{g}^{\mathrm{lci+}}$ the moduli map of $\overline{f}$.
We take a smooth morphism $U\to \mathcal{M}_{g}^{\mathrm{lci+}}$ from an affine scheme $U$ whose image contains the moduli point of $\overline{f}^{-1}(p)$.
Since $\mathcal{M}_{g}^{\mathrm{lci+}}$ is smooth over $k$ ({\cite[Tag 0DZX]{Sta}}), so is $U$.
Replacing $(p\in B)$ with its \'{e}tale neighborhood, we may assume that the moduli map of $\overline{f}$ factors through $U$, which is denoted by 
$$
\rho_{\overline{f}}\colon B\xrightarrow{\tau} U\to \mathcal{M}_{g}^{\mathrm{lci+}}.
$$
Taking the graph $B\to B\times U$ of $\tau$ and replacing $U$ with $B\times U$, we may assume that $\tau$ is a regular closed immersion.
Take a smooth surface $\mathcal{B}$ defined as the complete intersection of general hypersurfaces in $U$ containing $B$.
By Theorem~\ref{codimbound}, we may assume that the image of $\mathcal{B}\setminus \{p\}$ to $\mathcal{M}_{g}^{\mathrm{lci+}}$ is contained in $\overline{\mathcal{M}}_{g}$.
Furthermore, taking hyperplanes defining $\mathcal{B}$ generally,
we may assume that the surface $\mathcal{B}$ intersects the boundary divisors of $\overline{\mathcal{M}}_{g}$ transversely outside $B$.
Let $\overline{\bm{f}}\colon \overline{\mathcal{S}}\to \mathcal{B}$ denote the corresponding family to the morphism $\mathcal{B}\to \mathcal{M}_{g}^{\mathrm{lci+}}$.
Replacing $(p\in \mathcal{B})$ with its \'{e}tale neighborhood, we may assume that there exists a smooth morphism $h\colon (p\in \mathcal{B})\to (0\in T)$ to a smooth curve such that $h^{-1}(0)=B$.
Hence we obtain a splitting family 
$$
\overline{\mathcal{S}}\xrightarrow{\overline{\bm{f}}} (p\in \mathcal{B})\xrightarrow{h} (0\in T)
$$
 of $\overline{f}$.
By Lemma~\ref{twosplitfam}~(2), replacing $(0\in T)$ with a finite covering, we obtain a splitting family $\mathcal{S}\to (p\in \mathcal{B})\to (0\in T)$ of $f$ such that any fiber of $\bm{f}_{t} \colon \mathcal{S}_{t}\to \mathcal{B}_{t}$ is stable with at most one node for each non-zero $t\in T$ sufficiently near $0$, whence the claim holds.
\end{proof}

\begin{lemma} \label{splitstable}
Let $\mathcal{M}_{g}^{\mathrm{Gor}}$ be the moduli stack defined in Example~\ref{*ex}~(2).
Let $\Gamma$ be the union of codimension one components of $\mathcal{M}_{g}^{\mathrm{Gor}}\setminus \overline{\mathcal{M}}_{g}$.
Let $\mathcal{A}$ be the set of isomorphism classes of stable fiber germs with at most one node.
Then for any fiber germ $f^{-1}(p)$ in $\mathcal{A}_{\infty}$,
the relative canonical model $\overline{f}$ belongs to $\mathcal{M}_{g}^{\mathrm{Gor}}\setminus \Gamma$.
\end{lemma}

\begin{proof}
Let $f\colon S\to (p\in B)$ be an element of $\mathcal{A}_{m}$.
Then the relative canonical model $\overline{f}$ belongs to $\mathcal{M}_{g}^{\mathrm{Gor}}$.
By the induction on $m$, we may assume that the claim holds for any element of $\mathcal{A}_{m-1}$.
Assume contrary that the central fiber of $\overline{f}$ belongs to $\Gamma$.
Let 
$$
\mathcal{S}\xrightarrow{\bm{f}} (p\in \mathcal{B})\xrightarrow{h} (0\in T)
$$
 be a splitting family of $f$ so that any fiber of $\bm{f}_{t} \colon \mathcal{S}_{t}\to \mathcal{B}_{t}$ belongs to $\mathcal{A}_{m-1}$ for each non-zero $t\in T$ sufficiently near $0$.
By the inductive assumption, the relative canonical model $\overline{\bm{f}}$ of $\bm{f}$ defines the moduli map $\rho_{\overline{\bm{f}}}\colon \mathcal{B}\to \mathcal{M}_{g}^{\mathrm{Gor}}$ such that the inverse image of $\Gamma$ consists of only one point $p$, which is of codimension two in $\mathcal{B}$.
This is a contradiction since $\Gamma$ is of codimension one in $\mathcal{M}_{g}^{\mathrm{Gor}}$.
\end{proof} 

Since smooth multiple fibers are atomic,
it is expected that smooth multiple curves of minimal fiber type forms some irreducible components of $\Gamma$ in Lemma~\ref{splitstable}.
However, the codimension of this locus is not known.
Moreover, $\Gamma$ may have other components generically parametrizing curves which are not of canonical type.
So we make the following conjecture, which is a variant of the Morsification conjecture:

\begin{conjecture}[Moduli conjecture] \label{moduliconj}
There exists a moduli stack $\mathcal{M}^{\star}_{g}$ of curves such that the following holds:

\begin{itemize}
\item[$(\mathrm{i})$]
The moduli stack contains all Gorenstein curves of genus $g$ of canonical fiber type as geometric points.

\item[$(\mathrm{ii})$]
Any divisorial component of the complement of $\mathcal{M}_{g}$ generically parametrizes stable curves with one node or smooth multiple curves of minimal fiber type.
\end{itemize}

\end{conjecture}

\begin{remark} \label{relMorsMod}
(1) For $g=2$, the Moduli conjecture holds true for $\mathcal{M}^{\star}_{2}=\mathcal{M}_{2}^{\mathrm{Gor}}\setminus \Gamma$ as in Lemma~\ref{splitstable}.
Indeed, the Morsification conjecture is true for $g=2$ (\cite{Hor3}) and there exist no multiple fibers of genus $2$.

\smallskip

\noindent
(2) The moduli stacks $\mathcal{M}^{\mathrm{Gor}}_{g}$ and $\mathcal{M}^{\mathrm{lci}}_{g}$ satisfy the condition~(i), but it is not known whether it satisfies the condition (ii).

\smallskip

\noindent
(3) If a moduli stack $\mathcal{M}^{\star}_{g}$ satisfies the condition (ii) and the following condition:

\begin{itemize}
\item[(iii)]
The moduli stack is smooth over the base field $k$.
\end{itemize}
(e.g., $\mathcal{M}_{g}$, $\overline{\mathcal{M}}_{g}$, $\mathcal{M}^{\mathrm{lci+}}_{g}$), then the Morsification conjecture holds for any fiber germ $f^{-1}(p)$ whose relative canonical model belongs to $\mathcal{M}^{\star}_{g}$ by the same proof of Proposition~\ref{splitcri}.
In particular, if $\mathcal{M}^{\star}_{g}$ satisfies (i), (ii) and (iii), then the Morsification conjecture holds true.
But this is no longer expected.

\smallskip

\noindent
(4) If we assume the Morsification conjecture, then the condition (ii) is equivalent to the  following weaker condition (ii)$'$:

\begin{itemize}
\item[(ii)$'$]
Any divisorial component of the complement of $\mathcal{M}_{g}$ generically parametrizes Gorenstein curves of canonical fiber type.
\end{itemize}
Indeed, we consider any splitting family 
$$
\overline{\mathcal{S}}\xrightarrow{\overline{\bm{f}}} (p\in \mathcal{B}) \xrightarrow{h} (0\in T)
$$
 of a relatively canonical fiber germ $\overline{f}\colon \overline{S}\to (p\in B)$ whose central fiber corresponds to a general point of a boundary prime divisor $\Delta$.
Then the fibration $\overline{\bm{f}}_{t}\colon \overline{\mathcal{S}}_{t}\to \mathcal{B}_{t}$ has a fiber corresponding to a point of $\Delta$ since the inverse image of $\Delta$ by the moduli map of $\overline{\bm{f}}$ is a horizontal curve on $\mathcal{B}$ with respect to $h$.
Hence we conclude that the divisor $\Delta$ generically parametrizes stable curves with one node or smooth multiple curves of minimal fiber type by the assumption of the Morsification conjecture and Lemma~\ref{twosplitfam}.
\end{remark}

\section{Slope inequality of fibered surfaces}
\label{sec:Slope inequality of fibered surfaces}

In this section, we fix an open substack $\mathcal{M}^{*}_{g}\subseteq \mathcal{M}^{\mathrm{Gor}}_{g}$ containing $\overline{\mathcal{M}}_{g}$
over an algebraically closed field $k$. 

\subsection{Tautological divisors}
It is well known that the $\Q$-Picard group of $\overline{\mathcal{M}}_{g}$ is generated by the Hodge class $\lambda$ and the boundary divisors $\delta_{0}, \delta_{1},\ldots, \delta_{\llcorner g/2 \lrcorner}$ freely ($g\ge 3$) or with one relation $10\lambda=\delta_{0}+2\delta_{1}$ ($g=2$) (\cite{Mor}).
Let $\kappa$ denote the first Morita--Mumford class on $\overline{\mathcal{M}}_{g}$.
Then Noether's formula 
$$
12\lambda=\kappa+\delta, \quad \delta:=\sum_{i=0}^{\llcorner g/2 \lrcorner}\delta_{i}
$$
holds.
We can extend $\lambda$ and $\kappa$ as functorial divisors on $\mathcal{M}^{*}_{g}$ as follows:

\begin{definition}[$\lambda$ and $\kappa$ as functorial divisors]
(1) First we recall the definition of Hodge bundle $\lambda$.
Let $\pi\colon \mathcal{U}^{*}_{g}\to \mathcal{M}^{*}_{g}$ 
denote the universal family on $\mathcal{M}^{*}_{g}$. 
Let $\omega_{\pi}$ denote the relative dualizing line bundle with respect to $\pi$ on the lisse-\'{e}tale site of $\mathcal{U}^{*}_{g}$.
By Lemma~\ref{relativedualizing}~(1), the pushforward $\pi_{*}\omega_{\pi}$ is a locally free sheaf of rank $g$.
The Hodge bundle $\lambda$ is defined as the determinant line bundle of $\pi_{*}\omega_{\pi}$.

For $g\ge 3$, the stack $\mathcal{M}^{*}_{g}$ has an open dense subscheme such as the locus of smooth curves with trivial automorphism groups.
Thus, the line bundle $\lambda$ is trivial over some open subscheme, and hence we can take a Cartier divisor $\Lambda$ on $\mathcal{M}^{*}_{g}$ such that $\lambda\cong \O_{\overline{\mathcal{M}}_{g}}(\Lambda)$
by Lemma~\ref{pdivCar}.
For the $g=2$ case, $\lambda^{\otimes 2}$ is trivial over some open subscheme
since the stabilizer of the generic point is of order two.
Thus, we can also regard $\lambda$ as a $\Q$-Cartier divisor on $\mathcal{M}^{*}_{g}$.
In any case, we can regard $\lambda$ as a functorial divisor on $\mathcal{M}^{*}_{g}$
as in Example~\ref{functdivex}~(1).

\smallskip

\noindent
(2) We regard $\kappa$ as a functional divisor on $\mathcal{M}^{*}_{g}$ as follows:
When $g\ge 3$, by the same argument as in (1), we can take two Cartier divisors $E$ and $F$ on $\mathcal{U}^{*}_{g}$
such that 
$$
\O_{\mathcal{U}^{*}_{g}}(E)\cong \omega_{\pi} \cong \O_{\mathcal{U}^{*}_{g}}(F)
$$
and the intersection of the supports of $E$ and $F$ is not dominant to $\mathcal{M}^{*}_{g}$.
The first Morita--Mumford class $\kappa$ is nothing but the divisor $\pi_{*}(E\cdot F)$ up to linear equivalence.
From Example~\ref{functdivex}~(2), 
 $\pi_{*}(E\cdot F)$ can be extended to a functorial divisor on $\mathcal{M}^{*}_{g}$.
For the $g=2$ case, the above argument also works by replacing $E$ and $F$ with suitable $\Q$-Cartier divisors.
\end{definition}

\begin{remark}
From the Grothendieck--Riemann--Roch theorem, we can show that $\kappa=\lambda^{(2)}-\lambda$ at any family $\mathcal{C} \to B$ in $\mathcal{M}^{\mathrm{lci}}_{g}$,
where $\lambda^{(2)}$ is the generalized Hodge class defined by the determinant of $\pi_{*}\omega_{\pi}^{\otimes 2}$.
Thus we can regard $\kappa$ as a $\Q$-Cartier divisor on $\mathcal{M}^{\mathrm{lci}}_{g}$ by using $\lambda^{(2)}-\lambda$ instead of $\kappa$.
\end{remark}



\begin{definition}[Horikawa divisors] \label{efftautodef}
We assume that $g\ge 3$ for simplicity (if $g=2$, the following argument can also be considered using only $\lambda$ and $\delta_{0}$ instead of $\lambda, \delta_{0},\ldots, \delta_{\llcorner \frac{g}{2} \lrcorner}$).
Let $D$ be a non-zero effective $\Q$-divisor on $\overline{\mathcal{M}}_{g}$ that does not contain any boundary divisor $\delta_{i}$.
Then we have
$$
D\sim_{\Q} a\lambda-\sum_{i=0}^{\llcorner \frac{g}{2} \lrcorner}b_{i}\delta_{i}
$$
for some rational numbers $a, b_{0},\ldots, b_{\llcorner g/2 \lrcorner}$.
It is well known that all the $a$ and $b_i$'s are positive. 
The {\em slope} of $D$ is defined as
$$
s_{D}:=\frac{a}{\min_{i}\{b_{i}\}}.
$$
Then we have
$$
\kappa-(12-s_{D})\lambda \sim_{\Q} \frac{1}{b}\left(D+\sum_{i=0}^{\llcorner \frac{g}{2} \lrcorner}(b_i-b)\delta_{i} \right)
$$
on $\overline{\mathcal{M}}_{g}$, where $b:=\min_{i}\{b_i\}$.

We now regard $\kappa$ and $\lambda$ as functorial divisors on $\mathcal{M}^{*}_{g}$.
Then, there exist a rational function $f$ on $\mathcal{M}_{g}^{*}$, a rational number $q$
and a $\Q$-divisor $\delta''$ supported on $\mathcal{M}_{g}^{*}\setminus \overline{\mathcal{M}}_{g}$ such that 
\begin{equation} \label{Hordiveq}
\left( \kappa-(12-s_{D})\lambda+q \mathrm{div}_{\mathcal{M}^{*}_{g}}(f)\right)_{\mathcal{M}_{g}^{*}} = \frac{1}{b}\left(D+\sum_{i=0}^{\llcorner \frac{g}{2} \lrcorner}(b_i-b)\delta_{i} \right)+\delta'',
\end{equation}
where the left hand side is the Weil divisor associated to the functorial divisor $\kappa-(12-s_{D})\lambda+q \mathrm{div}_{\mathcal{M}^{*}_{g}}(f)$, and $D$ and $\delta_{i}$'s are considered as divisors on $\mathcal{M}^{*}_{g}$ by taking their closure.
The functorial divisor 
$$
H_{D}:=\kappa-(12-s_{D})\lambda+q \mathrm{div}_{\mathcal{M}^{*}_{g}}(f)
$$
 is independent of the choices of $f$, $q$, and functorial divisors representing $\kappa$ and $\lambda$ satisfying \eqref{Hordiveq}.
We call $H_{D}$ the {\em Horikawa divisor associated to $D$} on $\mathcal{M}_{g}^{*}$.
\end{definition}

\subsection{Local invariants}
In this subsection, we define several local invariants of relatively minimal fiber germs.

\begin{definition}[Local invariants] \label{locinvfunct}
Let $D$ be a functorial divisor on $\mathcal{M}_{g}^{*}$.
Let $f\colon S\to (p\in B)$ be a relatively minimal fiber germ of genus $g$
such that the relative canonical model $\overline{f}\colon \overline{S}\to (p\in B)$ belongs to $\mathcal{M}_{g}^{*}$.
Assume that $f$ is {\em $D$-general}, that is, general fibers do not belong to the support of $D$.
Then, the {\em local invariant of $f^{-1}(p)$ associated to $D$}, denoted by $D(f^{-1}(p))$, is defined as the order of $D_{B}$ at $p$, where $D_{B}$ is the trace of $D$ at the moduli map of $\overline{f}$.
\end{definition}

The local invariant associated to a functorial divisor preserves under splitting deformations:

\begin{lemma} \label{locinvsplit}
Let $D$ be a functorial divisor on $\mathcal{M}_{g}^{*}$.

\smallskip

\noindent
$(1)$ Let $f\colon S\to (p\in B)$ be a relatively minimal $D$-general fiber germ whose relative canonical model belongs to $\mathcal{M}_{g}^{*}$.
Let $\mathcal{S}\xrightarrow{\bm{f}} (p\in \mathcal{B})\xrightarrow{h} (0\in T)$ be a splitting family of $f^{-1}(p)$.
Then, 
$$
D(f^{-1}(p))=\sum_{q\in \mathcal{B}_{t}}D(\bm{f}_{t}^{-1}(q))
$$ 
holds for any non-zero $t\in T$ sufficiently near $0$.

\smallskip

\noindent
$(2)$ Let $\mathcal{A}$ be a set of isomorphism classes of relatively minimal fiber germs of genus $g$ including all smooth fiber germs of genus $g$.
Assume that $D(f^{-1}(p))\ge 0$ for any $D$-general fiber germ $f^{-1}(p)$ in $\mathcal{A}$ whose relative canonical model belongs to $\mathcal{M}_{g}^{*}$.
Then, $D(f^{-1}(p))\ge 0$ holds for any $D$-general fiber germ $f^{-1}(p)$ in $\mathcal{A}_{\infty}$ whose relative canonical model belongs to $\mathcal{M}_{g}^{*}$.
\end{lemma}

\begin{proof}
The claim (2) follows from (1) by the definition of $\mathcal{A}_{\infty}$ (Definition~\ref{splittable}).
To prove (1), let $\overline{\bm{f}}\colon \overline{\mathcal{S}}\to (p\in \mathcal{B})$ denote the relative canonical model of $\bm{f}$.
From Lemma~\ref{relcanocomp}, it is a family of Gorenstein curves of genus $g$ of canonical fiber type and $\overline{\bm{f}}_{t}\colon \overline{\mathcal{S}}_{t}\to (p\in \mathcal{B}_{t})$ is isomorphic to the relative canonical model of $\bm{f}_{t}$ for each $t\in T$.
Since the natural immersion $\iota_{t}\colon \mathcal{B}_{t}\hookrightarrow \mathcal{B}$ is regular, we have $D_{\mathcal{B}_{t}}=\iota_{t}^{!}D_{\mathcal{B}}$.
Thus, the degree of $D_{\mathcal{B}_{t}}$ is independent of $t\in T$ sufficiently near $0$.
Hence, $D(f^{-1}(p))=\sum_{q\in \mathcal{B}_{t}}D(\bm{f}_{t}^{-1}(q))$ holds for any non-zero $t\in T$ sufficiently near $0$.
\end{proof}

\begin{example} \label{localinvex}
The following are typical local invariants, where we consider $\mathcal{M}^{*}_{g}=\mathcal{M}^{\mathrm{Gor}}_{g}$ in (1), (2), (3) below.

\smallskip

\noindent
(1) (Horikawa index) 
Let $D$ be a non-zero effective $\Q$-divisor on $\overline{\mathcal{M}}_{g}$ containing no boundary divisors (or $D=\delta_{1}$ and $g=2$).
Then, we call the local invariant $H_{D}(f^{-1}(p))$ the {\em Horikawa index of $f^{-1}(p)$ associated to $D$}, where $H_{D}$ is the Horikawa divisor associated to $D$ as in Definition~\ref{efftautodef}.
Note that a generically smooth fibration $f$ is $D$-general if and only if it is $H_{D}$-general.
By the definition of $H_D$, for a relatively minimal $D$-general fibered surface $f\colon S\to B$ of genus $g$, we have
$$
K_{f}^{2}=(12-s_{D})\chi_{f}+\sum_{p\in B}H_{D}(f^{-1}(p)).
$$
This is called the {\em slope equality} if $H_{D}(f^{-1}(p))$ is non-negative for any $p\in B$, since this is more precise than the slope inequality $K_{f}^{2}\ge (12-s_{D})\chi_{f}$.
We later show in Theorem~\ref{slopeeq} that it is true when all fiber germ $f^{-1}(p)$ satisfies the Morsification conjecture or the Moduli conjecture (Conjecture~\ref{moduliconj}) holds true.

\smallskip

\noindent
(2) (Local signature) 
Similarly to the definition of the Horikawa divisor $H_{D}$ in Definition~\ref{efftautodef},
we can define the {\em signature divisor} $\sigma_{D}$ as the functorial divisor
satisfying that the associated Weil divisor is $\Q$-linearly equivalent to $\kappa-8\lambda$ and a $\Q$-linear combination of $D$ and boundary divisors.
We call the number $\sigma_{D}(f^{-1}(p))$ the {\em local signature of $f^{-1}(p)$ associated to $D$}.
For a relatively minimal $D$-general fibered surface $f\colon S\to B$ of genus $g$, we have
$$
\Sign(S)=K_{f}^{2}-8\chi_{f}=\sum_{p\in B}\sigma_{D}(f^{-1}(p)),
$$
where $\Sign(S)$ is the signature of the intersection form on $H^{2}(S, \Q)$.

Ashikaga and Yoshikawa \cite{AsYo} introduced the signature divisor on $\overline{\mathcal{M}}_{g}$ and defined the local signature by using semistable reduction.
This local signature is equal to that we defined above (Remark~\ref{locsigncompare}).
By adopting our definition, the additivity of local signatures under splitting deformations is guaranteed (Lemma~\ref{locinvsplit}~(1)).

\smallskip

\noindent
(3) (Topological Euler contribution) 
By Noether's formula $12\lambda=\kappa+\delta$, there exist a rational function $f$ on $\mathcal{M}_{g}^{\mathrm{Gor}}$, a rational number $q$ and a $\Q$-divisor $\delta'$ supported on $\mathcal{M}_{g}^{\mathrm{Gor}}\setminus \overline{\mathcal{M}}_{g}$ such that 
\begin{equation}\label{Eulereq}
\left( 12\lambda-\kappa+q\mathrm{div}_{\mathcal{M}_{g}^{\mathrm{Gor}}}(f) \right)_{\mathcal{M}_{g}^{\mathrm{Gor}}}=\sum_{i}\delta_{i}+\delta'.
\end{equation}
The functorial divisor $12\lambda-\kappa+q\mathrm{div}_{\mathcal{M}_{g}^{\mathrm{Gor}}}(f)$ is independent of the choices of $f$, $q$ and functorial divisors representing $\kappa$ and $\lambda$ satisfying \eqref{Eulereq}.
We denote this functorial divisor by $\delta$ unless confusion arises.
It is easily seen that 
$$
\delta(f^{-1}(p))=\chi_{\mathrm{top}}(f^{-1}(p))-2+2g
$$
holds for a relatively minimal fiber germ $f\colon S\to (p\in B)$ of genus $g$ (see also the proof of Lemma~\ref{discrep}), which is called the {\em topological Euler contribution for $f^{-1}(p)$}.
For a relatively minimal fibered surface $f\colon S\to B$ of genus $g$, we have
$$
e_{f}=12\chi_{f}-K_{f}^{2}=\sum_{p\in B}\delta(f^{-1}(p)).
$$

\smallskip

\noindent
(4) (Virtual number of nodes of type $i$) 
Let $\mathcal{M}_{g}^{*}$ be the smooth locus of $\mathcal{M}_{g}^{\mathrm{Gor}}$.
We note that this contains $\mathcal{M}_{g}^{\mathrm{lci+}}$.
In this case, each boundary divisor $\delta_{i}$ is Cartier on $\mathcal{M}^{*}_{g}$ and hence can be regarded as a functorial divisor on $\mathcal{M}^{*}_{g}$.
The number $\delta_{i}(f^{-1}(p))$ is called the {\em virtual number of nodes of type $i$}.
Indeed, for a relatively minimal fiber germ $f\colon S\to (p\in B)$ of genus $g$ whose relative canonical model has reduced fibers, the number $\delta_{i}(f^{-1}(p))$ coincides with the number of nodes of type $i$ on the stable fibers split from $f^{-1}(p)$ by Lemma~\ref{locinvsplit}~(1) and Proposition~\ref{splitcri}.
\end{example}

\subsection{Slope inequality of Moriwaki type}
Combining Theorem~\ref{codimbound} with Moriwaki's relative Bogomolov inequality \cite{Mor2}, the following precise slope inequality holds:

\begin{theorem} \label{Moriwakiineq}
Let $f\colon S\to B$ be a relatively minimal fibered surface of genus $g$ over an algebraically closed field of characteristic $0$.
Assume that all the fibers of the relative canonical model of $f$ are reduced.
Then, we have
$$
K_{f}^{2}\ge \frac{4(g-1)}{g}\chi_{f}+\sum_{i=1}^{\llcorner g/2 \lrcorner}\frac{4i(g-i)-g}{g}\sum_{p\in B}\delta_{i}(f^{-1}(p)).
$$
\end{theorem}

\begin{proof}
By {\cite[Theorem~B]{Mor2}}, the divisor 
$$
D:=(8g+4)\lambda-g\delta_{0}-\sum_{i=1}^{\llcorner g/2 \lrcorner}4i(g-i)\delta_{i}
$$ 
on $\overline{\mathcal{M}}_{g}$ is weakly positive on $\mathcal{M}_{g}$.
Namely, for any point $x\in M_{g}$, any ample divisor $A$ on $\overline{M}_{g}$ and any positive rational number $\varepsilon$, the $\Q$-divisor $D+\varepsilon A$ on $\overline{M}_{g}$ is semiample at $x$, where $M_{g}$ and $\overline{M}_{g}$ are the coarse moduli spaces of $\mathcal{M}_{g}$ and $\overline{\mathcal{M}}_{g}$, respectively.
Note that the points and the rational Picard group of $\overline{\mathcal{M}}_{g}$ can be identified with those of $\overline{M}_{g}$, respectively.
Let $f\colon S\to B$ be a fibered surface in the claim, and let $\rho\colon B\to \mathcal{M}^{\mathrm{lci+}}_{g}$ be the moduli map of the relative canonical model of $f$.
By Noether's formula $12\lambda=\kappa+\delta$, it suffices to show that $\deg D_{B}\ge 0$,
where we regard $D$ as a functorial divisor on $\mathcal{M}^{\mathrm{lci+}}_{g}$.
We take a point $x$ in the intersection of the image of $\rho$ and $\mathcal{M}_{g}$.
Then there exists an effective $\Q$-divisor $E_{\varepsilon}$ on $\overline{\mathcal{M}}_{g}$ which is $\Q$-linearly equivalent to $D+\varepsilon A$ such that $x\notin E_{\varepsilon}$ since $D+\varepsilon A$ is semiample at $x$.
By Theorem~\ref{codimbound} and $\mathcal{M}^{\mathrm{lci+}}_{g}$ is smooth, $E_{\varepsilon}$ can be regarded as an effective $\Q$-Cartier $\Q$-divisor and hence a functorial divisor on $\mathcal{M}^{\mathrm{lci+}}_{g}$.
From $x\notin E_{\varepsilon}$ and Theorem~\ref{effthm}, the trace $E_{\varepsilon, B}$ is effective.
Taking the degree, we have
$$
\deg D_{B}+\varepsilon \deg A_{B}=\deg E_{\varepsilon, B} \ge 0.
$$
Hence we obtain $\deg D_{B}\ge 0$ by taking the limit as $\varepsilon$ approaches $0$.
\end{proof}

\begin{question}
Does Theorem~\ref{Moriwakiineq} hold in positive characteristic?
Can Theorem~\ref{Moriwakiineq} be generalized without assuming the reducibility of fibers?
\end{question}
Note that Theorem~\ref{Moriwakiineq} holds in positive characteristic if $f$ is semistable (\cite{Yam}).

\begin{remark}
The smoothness of $S$ is not needed in Theorem~\ref{Moriwakiineq}.
Indeed, the same inequality holds for any generically smooth family $f\colon S\to B$ of reduced, local complete intersection canonically polarized curves of genus $g$ over a smooth projective curve $B$, which is a generalization of {\cite[Proposition~4.1]{AFS}}.
\end{remark}


\subsection{Chern invariants}
In the rest of this section, we assume that the base field $k$ is of characteristic $0$ or consider in the complex analytic setting unless otherwise stated.
First we recall the Chern invariants of relatively minimal fiber germs $f\colon S\to (p\in B)$ introduced in \cite{TanII}.

Let $f'\colon S'\to B'$ be a semistable reduction of $f^{-1}(p)$ with a finite morphism $\varphi\colon B'\to B$ of degree $N$ totally ramified at $p$,
and consider the following commutative diagram:

$$
\xymatrix{
S' \ar[rrrd]_-{f'}  & \widetilde{S}  \ar[l]_-{\tau}  \ar[r]^-{\pi} & \widehat{S}  \ar[r]^-{\nu} & S\times_{B}B' \ar[r]^-{\psi} \ar[d]^-{f_{B'}} & S \ar[d]^-{f} \\
 & & & B' \ar[r]_-{\varphi} & B,
}
$$
where $\nu$ is the normalization of $S\times_{B}B'$, $\pi$ is the minimal resolution of $\widehat{S}$, and $\tau$ is the relatively minimal model of $\widetilde{S}$ over $B'$.
Thus, we can write
$$
K_{\widehat{S}}=\nu^{*}K_{S\times_{B}B'}-\Delta, \quad 
K_{\widetilde{S}}=\pi^{*}K_{\widehat{S}}-\sum_{i}a_{i}E_{i}, \quad
K_{\widetilde{S}}=\tau^{*}K_{S'}+\sum_{j}b_{j}F_{j}
$$
for some non-negative rational numbers $a_i$ and $b_j$,
where $\Delta$ is the conductor of $\nu$, $E_{i}$'s are $\pi$-exceptional curves and $F_{j}$'s are $\tau$-exceptional curves.
Thus, the difference 
\begin{equation} \label{anticanocycle}
(\psi\circ \nu \circ \pi)^{*}K_{f}-\tau^{*}K_{f'}=\pi^{*}\Delta+\sum_{i}a_{i}E_{i}+\sum_{j}b_{j}F_{j}
\end{equation}
is effective and supported on the central fiber of $\widetilde{S}\to B'$.
Moreover, there is a natural injection:
\begin{align*}
f'_{*}\omega_{f'} &\cong f'_{*} \tau_{*} \tau^{*}\omega_{f'} \\
&\hookrightarrow  (f_{B'}\circ \nu \circ \pi)_{*} \left( (\nu \circ \pi)^{*} \psi^{*}\omega_{f}\otimes \O_{\widetilde{S}}(-\lceil \pi^{*}\Delta \rceil) \right) \\
&\cong f_{B'*} \psi^{*}\omega_{f}\otimes \nu_{*}\O_{\widehat{S}}(-\Delta) \\
&\hookrightarrow f_{B'*}\psi^{*}\omega_{f} \\
&\cong \varphi^{*}f_{*}\omega_{f},
\end{align*}
which we denote by $\iota\colon f'_{*}\omega_{f'}\hookrightarrow \varphi^{*}f_{*}\omega_{f}$,
 where the first and third isomorphisms, the second injection, and the last isomorphism are respectively due to the projection formula, \eqref{anticanocycle} and Lemma~\ref{relativedualizing}~(1).
 
\begin{definition}[Chern invariants] \label{Cherndef}
The {\em Chern invariants} $c_{1}^2(f^{-1}(p))$, $c_{2}(f^{-1}(p))$ and $\chi_{f^{-1}(p)}$ are defined as the rational numbers

\begin{align*}
c_{1}^2(f^{-1}(p))&:=\frac{1}{N}((\psi\circ \nu \circ \pi)^{*}K_{f}+\tau^{*}K_{f'})\cdot ((\psi\circ \nu \circ \pi)^{*}K_{f}-\tau^{*}K_{f'}), \\
c_{2}(f^{-1}(p))&:=\delta(f^{-1}(p))-\frac{1}{N}\delta(f'^{-1}(p')), \\
\chi_{f^{-1}(p)}&:=\frac{1}{N}\length(\mathrm{Coker}(\iota)), 
\end{align*}
where $\delta(f^{-1}(p))=\chi_{\mathrm{top}}(f^{-1}(p))-2+2g$ is the topological Euler contribution of $f^{-1}(p)$ (Example~\ref{localinvex}~(3)).
\end{definition}

These invariants are independent of the choice of a semistable reduction, and satisfy the following (see \cite{TanII}):

\begin{itemize}[itemsep=3pt, parsep=0pt]

\item
(Positivity) $c_{1}^2(f^{-1}(p))$, $c_{2}(f^{-1}(p))$ and $\chi_{f^{-1}(p)}$ are non-negative, and one of them is zero if and only if
$f^{-1}(p)$ is semistable.
In this case, all of them are zero.

\item
(Noether's formula)
$$
12\chi_{f^{-1}(p)}=c_{1}^2(f^{-1}(p))+c_{2}(f^{-1}(p)).
$$

\item
(Local-to-global formula)
Let $f\colon S\to B$ be a relatively minimal fibered surface of genus $g$.
Let $f'\colon S'\to B'$ be a semistable reduction of $f$ with a finite morphism $B'\to B$ of degree $N$.
Then,
\begin{align*}
K_f^2-\frac{1}{N}K_{f'}^2&=\sum_{p\in B}c_1^2(f^{-1}(p)), \\
e_f-\frac{1}{N}e_{f'}&=\sum_{p\in B}c_2(f^{-1}(p)), \\
\chi_f-\frac{1}{N}\chi_{f'}&=\sum_{p\in B}\chi_{f^{-1}(p)}.
\end{align*}

\item
(Miyaoka--Yau type inequality {\cite[Theorem~3.4, Proposition~3.5]{TanII}})
$$
c_{1}^2(f^{-1}(p))\le 8\chi_{f^{-1}(p)}
$$ 
with equality holding if and only if $f^{-1}(p)$ is semistable or a multiple of a curve with at worst ordinary nodes.
Moreover, if the canonical model $\overline{f}^{-1}(p)$ is reduced, then
$$
c_{1}^2(f^{-1}(p))\le 6\chi_{f^{-1}(p)}.
$$ 
\end{itemize}

\begin{remark} \label{Chernrem}
(1) Our definition of Chern invariants in Definition~\ref{Cherndef} is slightly different from Tan's original definiton.
The coincidence can be checked directly or using similar arguments of the proof of  Lemma~\ref{discrep} below.



\smallskip

\noindent
(2) In the complex analytic setting, Ashikaga \cite{Ash} introduced the {\em local signature defect} of a fiber germ $f^{-1}(p)$, which is a $\Q$-valued invariant and denoted by $\mathrm{Lsd}(f^{-1}(p))$.
This invariant satisfies the local-to-global formula 
$$
\Sign(S)-\frac{1}{N}\Sign(S')=\sum_{p\in B}\mathrm{Lsd}(f^{-1}(p))
$$
for $f$ and $f'$ above, where $\Sign(S)$ is the signature of the intersection form on $H^{2}(S, \Q)$.
By using a partial semistable reduction and the local-to-global formula,
we can show that 
\begin{equation} \label{Lsd}
\mathrm{Lsd}(f^{-1}(p))=\frac{1}{3}(c_{1}^2(f^{-1}(p))-2c_2(f^{-1}(p)))=c_{1}^2(f^{-1}(p))-8\chi_{f^{-1}(p)}
\end{equation}
holds (see also the proof of Lemma~\ref{discrep}). 
In particular, $\mathrm{Lsd}(f^{-1}(p))$ is non-positive by Miyaoka--Yau type inequality.

\smallskip

\noindent
(3) All Chern invariants and the local signature defect are topological invariants of fiber germs in the complex analytic setting:
In fact, $\mathrm{Lsd}(f^{-1}(p))$ and $c_{2}(f^{-1}(p))$ can be written by the datum of the topological monodromy of $f^{-1}(p)$ by definition,
and $c_{1}^{2}(f^{-1}(p))$ and $\chi_{f^{-1}(p)}$ are determined by these invariants by Noether's formula and \eqref{Lsd}.
\end{remark}

Let $f\colon S\to B$ be a relatively minimal fibration of genus $g$ over a smooth curve $B$.
Let $f'\colon S'\to B'$ be a semistable reduction of $f$ defined by a finite morphism $\varphi\colon B'\to B$ of degree $N$.
The relative canonical models of $f$ and $f'$ correspond to the moduli maps $\rho\colon B\to \mathcal{M}^{\mathrm{Gor}}_{g}$ and $\rho'\colon B'\to \overline{\mathcal{M}}_{g}$, respectively.
The diagram
$$
\xymatrix{
B' \ar[r]^-{\varphi} \ar[d]_-{\rho'}  & B \ar[d]^-{\rho}  \\
\overline{\mathcal{M}}_{g} \ar@{}[r]|*{\subseteq} & \mathcal{M}^{\mathrm{Gor}}_{g}.
}
$$
is not commutative over the points $p\in B$ where $f^{-1}(p)$ is not semistable.
From the point of view of the functorial divisors $\lambda$, $\delta$ and $\kappa$ on $\mathcal{M}^{\mathrm{Gor}}_{g}$, the local-to-global formula can be interpreted as follows:

\begin{lemma} \label{discrep}
In the above situations, we have
\begin{align*}
\kappa_{B}-\frac{1}{N}\varphi_{*}\kappa_{B'}&=\sum_{p\in B}c_1^2(f^{-1}(p))p, \\
\delta_{B}-\frac{1}{N}\varphi_{*}\delta_{B'}&=\sum_{p\in B}c_2(f^{-1}(p))p, \\
\lambda_{B}-\frac{1}{N}\varphi_{*}\lambda_{B'}&=\sum_{p\in B}\chi_{f^{-1}(p)}p, 
\end{align*}
where $D_{B}$ and $D_{B'}$ are respectively the traces at $\rho$ and $\rho'$ for a functorial divisor $D$.
\end{lemma}

\begin{proof}
Note that the left hand sides in the claim do not depend on the choices of functorial divisors in the linear equivalence classes and on the choices of semistable reductions.
Restricting to a fiber germ at a closed point $p\in B$, we may assume that any fiber of $f$ other than $f^{-1}(p)$ is smooth.
If $f^{-1}(p)$ is semistable, then both the left and right hand sides are zero.
Thus the claim holds when all fibers of $f$ are semistable.
We assume that $f^{-1}(p)$ is not semistable.
We take an embedding of $f$ to a fibered surface $f_{1}\colon S_{1}\to B_{1}$ with $B_{1}$ complete arbitrarily, and take a partial semistable reduction $f'_{1}\colon S'_{1}\to B'_{1}$ at the fibers other than $f^{-1}(p)$ with a finite morphism $\varphi_{1}\colon B'_{1}\to B_{1}$.
Since $\varphi_{1}$ is \'{e}tale at $p$, the pullback of the left hand side and the right hand side in the claim over $B$ coincides with that over $B'_{1}$, respectively.
By applying the local-to-global formula to $f'_{1}$, the claim for $f'_{1}$ holds.
Hence the claim for $f$ holds by \'{e}tale descent.
\end{proof}

By Lemma~\ref{discrep}, the Horikawa index of $f^{-1}(p)$ can be described by using the Horikawa index of the semistable reduction and Chern invariants:

\begin{proposition} \label{Horindcompare}
Let $D$ be as in Example~\ref{localinvex}~(1), and let $f\colon S\to B$ be a relatively minimal $D$-general fibration over a smooth curve $B$.
Let $f'\colon S'\to B'$ be a semistable reduction of $f$ defined by a finite morphism $\varphi\colon B'\to B$ of degree $N$.
Then for any closed point $p\in B$, we have
$$
H_{D}(f^{-1}(p))=\frac{1}{N}\sum_{q\in \varphi^{-1}(p)}H_{D}(f'^{-1}(q))+c_1^{2}(f^{-1}(p))-(12-s_{D})\chi_{f^{-1}(p)}.
$$
In particular, if $H_{D}(f^{-1}(p))\ge 0$ and all fibers of the relative canonical model of $f'$ over $\varphi^{-1}(p)$ are not contained in $D$ and $\delta_{i}$ with $b_{i}>b$ in \eqref{Hordiveq}, we have
$$
c_1^{2}(f^{-1}(p))\ge (12-s_{D})\chi_{f^{-1}(p)}.
$$
\end{proposition}

\begin{proof}
The first assertion is due to Lemma~\ref{discrep} and the definition of the Horikawa index.
The last assertion follows from the first assertion, $H_{D}(f^{-1}(p))\ge 0$ and $H_{D}(f'^{-1}(q))=0$ for
 each $q\in \varphi^{-1}(p)$.
\end{proof}

\begin{remark} \label{locsigncompare}
Similarly to Proposition~\ref{Horindcompare}, we can also show 
$$
\sigma_{D}(f^{-1}(p))=\frac{1}{N}\sum_{q\in \varphi^{-1}(p)}\sigma_{D}(f'^{-1}(q))+\mathrm{Lsd}(f^{-1}(p)).
$$
The right hand side is nothing but the local signature introduced in \cite{AsYo}.
\end{remark}

\subsection{Slope equality of fibered surfaces}
In this subsection, we establish the slope equalities for general fibered surfaces, the main theorem in this paper:

\begin{theorem}[Slope equality] \label{slopeeq}
Let $D$ be a non-zero effective $\Q$-divisor on $\overline{\mathcal{M}}_{g}$ which contains no boundary divisors $($or $D=\delta_{1}$ and $g=2$$)$ with $s_{D}\ge 4$.
Let $f\colon S\to (p\in B)$ be a relatively minimal $D$-general fiber germ of genus $g$.
Assume that $f^{-1}(p)$ satisfies the Morsification conjecture or the Moduli conjecture (Conjecture~\ref{moduliconj}) holds true.
Then, the Horikawa index $H_{D}(f^{-1}(p))$ is non-negative.
In particular, for any relatively minimal $D$-general fibered surface $f\colon S\to B$ of genus $g$ assuming that all fibers of $f$ satisfy the Morsification conjecture or the Moduli conjecture holds true, we have 
$$
K_{f}^{2}=(12-s_{D})\chi_{f}+\sum_{p\in B}H_{D}(f^{-1}(p))\ge (12-s_{D})\chi_{f}.
$$
\end{theorem}

\begin{proof}
First we prove the claim when $f^{-1}(p)$ satisfies the Morsification conjecture.
Applying Lemma~\ref{locinvsplit} to the Horikawa divisor $H_{D}$, it is enough to show that the Horikawa index $H_{D}(f^{-1}(p))$ is non-negative for any $D$-general fiber germ $f\colon S\to (p\in B)$ which is stable with at most one node or smooth multiple.
It follows that $H_{D}(f^{-1}(p))\ge 0$ for any semistable $D$-general fiber germ $f^{-1}(p)$ since $H_{D}$ is effective on $\overline{\mathcal{M}}_{g}$.
Hence, it suffices to consider only smooth multiple fiber germs.
Let $f\colon S\to (p\in B)$ be a smooth multiple $D$-general fiber germ of multiplicity $m$.
We take a semistable reduction $f'\colon S'\to (p'\in B')$ of $f$ defined by a finite morphism $\varphi\colon (p'\in B')\to (p\in B)$.
Note that $\varphi$ can be taken as a cyclic covering of degree $m$ totally branched at $p$ and $S'$ is obtained by the normalization of $S\times_{B}B'$.
Since $f'^{-1}(p')$ is smooth, we have $H_{D}(f'^{-1}(p'))\ge 0$.
On the other hand, it follows from Proposition~\ref{Horindcompare} 
that 
$$
H_{D}(f^{-1}(p))=\frac{1}{m} H_{D}(f'^{-1}(p'))+c_1^{2}(f^{-1}(p))-(12-s_{D})\chi_{f^{-1}(p)}.
$$
Since $f^{-1}(p)$ is smooth multiple, the equality $c_1^{2}(f^{-1}(p))=8\chi_{f^{-1}(p)}$ holds  in the Miyaoka--Yau type inequality.
Hence 
$$
H_{D}(f^{-1}(p))=\frac{1}{m} H_{D}(f'^{-1}(p'))+(s_{D}-4)\chi_{f^{-1}(p)}\ge 0
$$
from the positivity of $\chi_{f^{-1}(p)}$ and the assumption $s_{D}\ge 4$.

To prove the claim under the assumption of the Moduli conjecture (Conjecture~\ref{moduliconj}),
it suffices to show that $H_{D}$ is effective as a functorial divisor on the moduli stack $\mathcal{M}^{\star}_{g}$ as in Conjecture~\ref{moduliconj}.
Assume contrary that $H_{D}$ is not effective.
Then, Theorem~\ref{effthm} implies that the associated Weil divisor $(H_{D})_{\mathcal{M}^{\star}_{g}}$ is not effective.
Namely, there exists an irreducible component $\Delta$ of the complement $\mathcal{M}^{\star}_{g}\setminus \overline{\mathcal{M}}_{g}$ such that the coefficient of $\Delta$ in $(H_{D})_{\mathcal{M}^{\star}_{g}}$ is negative.
The condition (ii) in Conjecture~\ref{moduliconj} implies that $\Delta$ generically parametrizes smooth multiple curves of minimal fiber type.
We take a smooth multiple curve $C$ corresponding to a general point of $\Delta$ and a very good smoothing family $f\colon S\to (p\in B)$ of $C$.
Since the closure of $D$ does not contain $\Delta$, we may assume that $f$ is $D$-general.
Then, we have
$$
H_{D}(f^{-1}(p))=\deg (H_{D})_{B}< 0
$$ 
since the coefficient of $\Delta$ in $(H_{D})_{\mathcal{M}^{\star}_{g}}$ is negative and the image of $\rho_{f}$ intersects the support of $H_{D}$ at $C\in \Delta$.
On the other hand, since $f^{-1}(p)$ is smooth multiple, $H_{D}(f^{-1}(p))$ is non-negative as shown above, which is a contradiction.
\end{proof}

\begin{remark}
Theorem~\ref{slopeeq} in positive characteristic also holds if we further assume in the statement of the Morsification conjecture and the Moduli conjecture that 
the multiplicities of smooth multiple curves are not divisible by the characteristic of $k$.
Indeed, for a smooth multiple fiber germ $f^{-1}(p)$ of multiplicity $m$ not divisible by the characteristic of $k$, we can take a semistable reduction of $f^{-1}(p)$ by a cyclic covering of degree $m$ over $B$, and compute directly 
$$
c_{1}^{2}(f^{-1}(p))=8\chi_{f^{-1}(p)}=4\left(1-\frac{1}{m}\right)(g-1),
$$
where the Chern invariants of $f^{-1}(p)$ are similarly defined as in characteristic $0$ if we can take a semistable reduction of $f^{-1}(p)$ by a separable finite base change.
Then, the proof of Theorem~\ref{slopeeq} also works in this setting.
In particular, the slope equality for $D$-general fibered surfaces whose canonical model has reduced fibers holds in any characteristic.
Note that the claim for the reduced fiber case also follows from the effectiveness of the Horikawa divisor $H_{D}$ on the moduli stack $\mathcal{M}^{\mathrm{lci+}}_{g}$ due to Theorem~\ref{codimbound}.
\end{remark}

\begin{remark}
In the complex analytic setting, the non-negativity of the Horikawa index also holds if $f$ satisfies the Morsification conjecture in the analytic setting or the Moduli conjecture holds true.
Indeed, the Horikawa index can be defined in the same way as in the algebraic setting by replacing $\mathcal{M}^{\mathrm{Gor}}_{g}$ with its analytification.
Thus the same proof of Theorem~\ref{slopeeq} works in the complex analytic setting.
\end{remark}

\begin{remark}
Historically, the slope equality was first established by Horikawa \cite{Hor2} for genus $2$ fibrations ($g=2$ and $D=\delta_{1}$ in Theorem~\ref{slopeeq}) and the term ``Horikawa index'' originates from this result.
Later, Reid \cite{Rei} established the slope equality for non-hyperelliptic genus $3$ fibrations ($g=3$ and $D$ is the hyperelliptic locus in Theorem~\ref{slopeeq}).
The slope equality in Theorem~\ref{slopeeq} was known for the cases where $g\le 5$ and $D$ achieves the bound $s_{g}$ and where $g$ is odd and $D$ is the Brill--Noether divisor $BN_{g,d}^{1}$ (\cite{Kon}, {\cite[Appendix]{AsYo}}).
For more details of slope equalities, see the survey \cite{AsKo2}.
Note that our Horikawa index $H_{D}(f^{-1}(p))$ preserves splitting deformations by Lemma~\ref{locinvsplit}, and hence the question in {\cite[p. 32]{AsKo2}} automatically holds.
\end{remark}

\subsection{Decomposition of Horikawa index}

Let $D$ be as in Example~\ref{localinvex}~(1).
According to Definition~\ref{efftautodef}, The Horikawa divisor $H_{D}$ is of the form
$$
H_{D}=\frac{1}{b}\left(D+\sum_{i=0}^{\llcorner \frac{g}{2} \lrcorner}(b_i-b)\delta_{i} \right)+\delta''
$$
as a Weil divisor.
Thus we have
\begin{equation} \label{Hortypei}
H_{D}(f^{-1}(p))=\frac{1}{b}D(f^{-1}(p))+\frac{b_i-b}{b}
\end{equation}
 for any stable $D$-general fiber germ $f^{-1}(p)$ with one node of type $i$,
 where $D$ is assumed to be a functorial divisor on $\overline{\mathcal{M}}_{g}$.
For a smooth multiple $D$-general fiber germ $f^{-1}(p)=mC$ of multiplicity $m$, 
we can compute the Horikawa index $H_{D}(f^{-1}(p))$ by using Proposition~\ref{Horindcompare}.
Indeed, by taking the semistable reduction $f'\colon S'\to (p'\in B')$ defined by a cyclic covering $(p'\in B')\to (p\in B)$ of degree $m$ totally branched at $p$, we obtain
$$
H_{D}(f^{-1}(p))=\frac{1}{m} H_{D}(f'^{-1}(p'))+(s_{D}-4)\chi_{f^{-1}(p)}
$$
as in the proof of Theorem~\ref{slopeeq}.
Let $g(C)$ denote the genus of $C$,
which satisfies $g-1=m(g(C)-1)$.
By the definition of $c_{2}(f^{-1}(p))$ and the equality of the Miyaoka--Yau type inequality,
we have 
$$
c_{2}(f^{-1}(p))=2-2g(C)-(2-2g)=\left(1-\frac{1}{m} \right)(2g-2),
$$
$$
\chi_{f^{-1}(p)}=\frac{1}{4}c_{2}(f^{-1}(p))=\frac{1}{2}\left(1-\frac{1}{m} \right)(g-1).
$$
Since $f'^{-1}(p')$ is smooth,
 $H_{D}(f'^{-1}(p'))$ can be identified with the intersection number of $D$ and the image of the moduli map of $f'$ multiplied by $b^{-1}$.
Hence we obtain
\begin{equation} \label{Horsmmult}
H_{D}(f^{-1}(p))=\frac{1}{mb} D(f'^{-1}(p'))+\frac{s_{D}-4}{2}\left(1-\frac{1}{m} \right)(g-1).
\end{equation}
Therefore, if a $D$-general fiber germ $f^{-1}(p)$ satisfies the Morsification conjecture, 
then the Horikawa index $H_{D}(f^{-1}(p))$ can be described by a non-negative linear combination of \eqref{Hortypei} and \eqref{Horsmmult} by Lemma~\ref{locinvsplit}~(1).

\begin{definition} \label{smmulttype}
Let $f\colon S\to (p\in B)$ be a $D$-general smooth multiple fiber germ of genus $g$.
Then $f$ is {\em $D$-indecomposable} if for any splitting family $\mathcal{S}\to (p\in \mathcal{B})\to (0\in T)$ of $f$, the fibration $\bm{f}_{t}\colon \mathcal{S}_{t}\to \mathcal{B}_{t}$ has no smooth fibers belonging to $D$ for non-zero $t\in T$ sufficiently near $0$.
In the view of the Morsification conjecture and the Moduli conjecture, it is expected to be equivalent to that the moduli point $f^{-1}(p)$ is not contained in the closure of $D$ on a suitable moduli stack $\mathcal{M}^{\star}_{g}$.

For $n\ge 0$ and $m\ge 2$, the fiber germ $f^{-1}(p)$ is said to be {\em of type $(m,n)$} if the multiplicity of $f^{-1}(p)$ equals to $m$ and $D(f'^{-1}(p'))=n$, where $f'^{-1}(p')$ is the semistable reduction of $f$ defined by the cyclic covering $(p'\in B')\to(p\in  B)$ of degree $m$ totally branched at $p$.
Let $\mathcal{A}^{D}_{(m,n)}$ denote the set of isomorphism classes of $D$-indecomposable smooth multiple fiber germs of type $(m, n)$ and smooth fiber germs of genus $g$.
\end{definition}

\begin{proposition} \label{smmultsplit}
Let $D$ be a non-zero effective $\Q$-divisor on $\overline{\mathcal{M}}_{g}$ which contains no boundary divisors, and let $m\ge 2$ be an integer such that $g-1\in m\Z$.
Let $Z$ denote the closed substack of $\mathcal{M}_{g}$ parametrizing curves having automorphisms of order $m$ with no fixed points.
Then the following holds:

\smallskip

\noindent
$(1)$ If $Z$ is not contained in $D$, then any $D$-general smooth multiple fiber germ of multiplicity $m$ is splittable into fibers in $\mathcal{A}^{D}_{(m,0)}$.

\smallskip

\noindent
$(2)$ If $Z$ is contained in $D$, then there exist finite positive numbers $n_1,\ldots, n_{l}$ such that any $D$-general smooth multiple fiber germ of multiplicity $m$ belongs to $(\cup_{i=1}^{k}\mathcal{A}^{D}_{(m,n_{i})})_{\infty}$.

\smallskip

\noindent
$(3)$ If $m=2$, $Z$ is contained in $D$ and $D$ is non-singular at the generic point of $Z$, then any $D$-general smooth multiple fiber germ of multiplicity $m$ is splittable into fibers in $\mathcal{A}^{D}_{(m,1)}$.
\end{proposition}

\begin{proof}
Note that $Z$ is irreducible, smooth and has codimension at least two (cf.\ {\cite[Theorem~(5.15)]{DeMu}}).
Let $f\colon S\to (p\in B)$ be a $D$-general smooth multiple fiber germ of multiplicity $m$.
Let $f'\colon S'\to B'$ be the semistable reduction defined by the cyclic covering $(p'\in B')\to (p\in B)$ of degree $m$ totally branched at $p$, and let $G\cong \Z/m\Z$ be its Galois group.
Note that $S'$ is the normalization of the fiber product $S\times_{B}B'$, the action of $G$ on $S'$ has no fixed points and $f'$ is $G$-equivariant and smooth.
Let $\rho_{f'}\colon B'\to \mathcal{M}_{g}$ denote the moduli map of $f'$.

\noindent
{\bf Step~1.}
Let $\pi\colon \mathcal{C}\to (x\in X)$ be a standard Kuranishi family for $f'^{-1}(p')$ ({\cite[Chapter~XI, \S6, Definition~(6.7)]{ACG}}).
Then $G$ acts on $\pi\colon \mathcal{C}\to X$ extending the action on $\pi^{-1}(x)\cong f'^{-1}(p')$.
Note that the moduli map $X\to \mathcal{M}_{g}$ is \'{e}tale, and the pullback $Z_{X}$ of $Z$ equals the locus of $G$-fixed points of $X$.
Shrinking $B$ to a neighbourhood of $p$, there exists a $G$-equivariant morphism $B'\to X$ by the universal property of the Kuranishi family.
Taking the quotient by $G$, we obtain a morphism $B\to X/G$.
Now we take a resolution $\alpha\colon Y\to X/G$ of the cyclic quotient singularities.
Note that $X/G$ is equisingular along the image of $Z_{X}$ and hence we can take $\alpha$ as the successive weighted blowing-ups along the centers dominating the image of $Z_{X}$ (cf.\ \cite{Fuj}).
Since the image of $B\to X/G$ is not contained in the image of $Z_X$, the morphism $B\to X/G$ uniquely lifts to $B\to Y$.
Let $Y'$ denote the normalization of the fiber product $Y\times_{X/G}X$.
Then we have a $G$-equivariant morphism $B'\to Y'$ which is a lift of $B'\to X$ with the quotient $B\to Y$.
The effective divisor $D_{Y'}$ can be decomposed into 
$$
D_{Y'}=\widehat{D}_{X}+\sum_{i}n_{i}E_{i},
$$
where $\widehat{D}_{X}$ is the proper transform of $D_{X}$ and $E_{i}$'s are prime divisors which are exceptional over $X$.
Note that the coefficients $n_{i}$ depend only on $D$ and $Z$, and are independent of the choice of a fiber germ $f^{-1}(p)$.
Moreover, if $Z$ is not contained in $D$, then we have $D_{Y'}=\widehat{D}_{X}$ by the assumption of the resolution $\alpha$.
We embed $B$ as the graph $B\subseteq B\times Y$ of $B\to Y$ and take a sufficiently small open neighbourhood $U$ of $p$ in $B\times Y$.
Replacing $B$ with $B\cap U$, we obtain a closed immersion $B\subseteq U$.
The fiber product $U':=U\times_{Y}Y'$ is a $G$-invariant open subvariety of $B\times Y'$ and we have a $G$-equivariant closed immersion $B'\subseteq U'$ with the quotient $B\subseteq U$.

\noindent
{\bf Step~2.}
Next we take $3g-4$ general hypersurfaces $H_{1},\ldots, H_{3g-4}$ of $U$ containing $B$ and put 
$$
\mathcal{B}:=H_{1}\cap\cdots \cap H_{3g-4}
$$
and $\mathcal{B}':=\mathcal{B}\times_{U} U'$.
Replacing $U$ with a neighbourhood of $p$, we may assume that there is a smooth morphism $h\colon (p\in \mathcal{B})\to (0\in T)$ to a germ of a smooth curve such that $h^{-1}(0)=B$.
Let $\bm{f}'\colon \mathcal{S}'\to \mathcal{B}'$ denote the family of smooth curves which is the pullback of the Kuranishi family $\pi\colon \mathcal{C}\to X$.
By construction, $G$ acts on $\mathcal{S}'$ and $\mathcal{B}'$ equivariantly, and there are no $G$-fixed points on $\mathcal{S}'$.
Let $\bm{f}\colon \mathcal{S}\to \mathcal{B}$ be the quotient of $\bm{f}'$ by $G$.
Then we obtain the splitting family of $f$
$$
\mathcal{S}\xrightarrow{\bm{f}} (p\in \mathcal{B}) \xrightarrow{h} (0\in T).
$$
Since the smooth multiple fiber germ $f$ is atomic, 
each family $\bm{f}_{t}\colon \mathcal{S}_{t}\to \mathcal{B}_{t}$ has exactly one singular fiber $\bm{f}_{t}^{-1}(p_{t})$ which is smooth multiple of degree $m$, and hence the branch locus of $\mathcal{B}'\to \mathcal{B}$ is a section of $h$.
The families $\bm{f}'_{t}\colon \mathcal{S}'_{t}\to \mathcal{B}'_{t}$ are semistable reductions of $\bm{f}_{t}$ by the cyclic covering $(p'_{t}\in \mathcal{B}'_{t})\to (p_{t}\in \mathcal{B}_{t})$ of degree $m$ totally branched at $p_{t}$ and $\bm{f}'_{0}\colon \mathcal{S}'_{0}\to (p'_{0}\in \mathcal{B}'_{0})$ is isomorphic to $f'\colon S'\to (p'\in B')$.

\noindent
{\bf Step~3.}
Assume that $Z$ is not contained in $D$.
Then the divisor $D_{\mathcal{B}'}$ does not contain the locus of $G$-fixed points on $\mathcal{B}'$ since $H_{i}$'s are general.
Then the fiber $\bm{f}_{t}^{'-1}(p'_{t})$ does not belong to $D$ and hence the claim (1) holds.

Assume that $Z$ is contained in $D$.
Then the divisor $D_{\mathcal{B}'}$ may contain the locus $Z_{\mathcal{B}'}$ of $G$-fixed points on $\mathcal{B}'$.
In this case, the coefficient of $Z_{\mathcal{B}'}$ in $D_{\mathcal{B}'}$ equals $n_{i}$ for some $i$ and we can write 
$$
D_{\mathcal{B}'}=D'+n_{i}Z_{\mathcal{B}'},
$$
where $D'$ is an effective divisor on $\mathcal{B}'$ which does not contain $Z_{\mathcal{B}'}$.
To prove the claim (2), we may assume that $f$ is $D$-indecomposable by performing finitely many splitting deformations.
Then it follows that $D'=0$ and $D(\bm{f}_{t}^{'-1}(p'_{t}))=n_{i}$.
Since the numbers $n_{i}$ are determined only on $Z$ and $D$, the claim (2) holds.
If moreover $m=2$ and $D$ is non-singular at the generic point of $Z$, then the resolution $\alpha \colon Y\to X/G$ is obtained by one blowing-up along the image of $Z_{X}$ and $D_{Y'}=\widehat{D}_{X}+E$ holds, where $E$ is an exceptional prime divisor.
Then the claim (3) holds by the above argument.
\end{proof}

The following corollary says that the Morsification conjecture implies ``algebraic Morsification'':

\begin{corollary} \label{Hordecomp}
Let $D$ be as in Proposition~\ref{smmultsplit} with the irreducible decomposition $D=\sum_{i}m_{i}D_{i}$.
Let $f\colon S\to (p\in B)$ be a relatively minimal $D$-general fiber germ of genus $g$ which satisfies the Morsification conjecture.
Then, there exist finite non-negative numbers $n_1,\ldots, n_{l}$ depending only on $D$  such that $f^{-1}(p)$ is splittable by finitely many splitting deformations into fiber germs $\bm{f}_{t}^{-1}(p_{t})$ of the following types:

\smallskip

\noindent 
$(\mathrm{i})$
Smooth fiber germs which do not belong to $D$ as moduli points.
In this case, 
$$
H_{D}(\bm{f}_{t}^{-1}(p_{t}))=0.
$$

\smallskip

\noindent 
$(\mathrm{ii})$
Smooth fiber germs whose moduli maps meet the support of $D$ transversely at a smooth point of $D_i$.
In this case, 
$$
H_{D}(\bm{f}_{t}^{-1}(p_{t}))=\frac{m_i}{b}.
$$

\smallskip

\noindent 
$(\mathrm{iii})$
Stable fiber germs with one node of type $i$ which do not belong to $D$ as moduli points.
In this case, 
$$
H_{D}(\bm{f}_{t}^{-1}(p_{t}))=\frac{b_{i}-b}{b}.
$$

\smallskip

\noindent 
$(\mathrm{iv})$
$D$-indecomposable smooth multiple fiber germs of type $(m, n_{i})$.
In this case, 
$$
H_{D}(\bm{f}_{t}^{-1}(p_{t}))=\frac{n_{i}}{mb}+\frac{s_{D}-4}{2}\left(1-\frac{1}{m} \right)(g-1).
$$
\end{corollary}

\begin{proof}
Let $f^{-1}(p)$ be a $D$-general relatively minimal fiber germ of genus $g$.
By the assumption of the Morsification conjecture, we may assume that it is a stable fiber germ with at most one node or a smooth multiple fiber germ.
If it is stable, then it is splittable into fibers in (i), (ii) and (iii) by the same proof of Proposition~\ref{splitcri}.
If it is smooth multiple, then Proposition~\ref{smmultsplit} implies that it is splittable into fibers in (i), (ii) and (iv).
\end{proof}

According to Corollary~\ref{Hordecomp} and Lemma~\ref{locinvsplit}~(1), we can write by abuse of notation that
\begin{equation}\label{Hordivdecomp}
H_{D}=\sum_{i}\frac{m_i}{b}D_i+\sum_{i=0}^{\llcorner \frac{g}{2} \lrcorner}\frac{b_i-b}{b}\delta_{i} +\sum_{i=1}^{k} \left(\frac{n_{i}}{mb}+\frac{s_{D}-4}{2}\left(1-\frac{1}{m} \right)(g-1)\right)\delta_{(m, n_{i})}.
\end{equation}
Here, the meaning of the equation is as follows: 
If the fiber germ $f^{-1}(p)$ splits into fibers listed in Corollary~\ref{Hordecomp} by finite splitting deformations $\xi$, then 
we let $D_{i}(f^{-1}(p), \xi)$, $\delta_{i}(f^{-1}(p), \xi)$ and $\delta_{(m, n_{i})}(f^{-1}(p), \xi)$ respectively denote the number of fibers in Corollary~\ref{Hordecomp} (ii), (iii) and (iv) that appear after the splitting $\xi$.
Then we have 
\begin{align*}
H_{D}(f^{-1}(p))&=\sum_{i}\frac{m_{i}}{b}D_{i}(f^{-1}(p), \xi)+\sum_{i=0}^{\llcorner \frac{g}{2} \lrcorner}\frac{b_i-b}{b}\delta_{i}(f^{-1}(p), \xi) \\
&+\sum_{i=1}^{k} \left(\frac{n_{i}}{mb}+\frac{s_{D}-4}{2}\left(1-\frac{1}{m} \right)(g-1)\right)\delta_{(m, n_{i})}(f^{-1}(p), \xi).
\end{align*}
Note that the numbers $D_{i}(f^{-1}(p), \xi)$, $\delta_{i}(f^{-1}(p), \xi)$ and $\delta_{(m, n_{j})}(f^{-1}(p), \xi)$ depend on the choices of splitting deformations $\xi$ .

\begin{question}
Can the equation~\eqref{Hordivdecomp} be considered as an equation of Weil divisors on a suitable moduli stack of curves?
\end{question}

Applying Corollary~\ref{Hordecomp} to the cases $g\le 5$, for which the Morsification conjecture is known in the complex analytic setting, one obtains a refinement of the previously known slope equalities. 
In particular, this includes a positive answer of Reid's conjecture {\cite[Conjecture~3.2]{Rei}}:
 
\begin{example} \label{genus2345}
(1) Suppose that $g=2$ and $D=\delta_{1}$.
Then, the Horikawa divisor $H_{\delta_1}$ on $\mathcal{M}^{\star}_{2}=\mathcal{M}^{\mathrm{Gor}}_{2}\setminus \overline{\mathcal{M}}_{2}$ as in Remark~\ref{relMorsMod}~(1) is described by
$$
H_{\delta_1}=\kappa-2\lambda=10\lambda-\delta=\delta_{1}.
$$
Thus, we obtain the slope equality
$$
K_{f}^2=2\chi_{f}+\sum_{p\in B}H_{D}(f^{-1}(p))=2\chi_{f}+\delta_{1}
$$
for any relatively minimal fibered surface $f\colon S\to B$ of genus $2$,
which gives a moduli-theoretic another proof of Horikawa's one ({\cite[Theorem~3]{Hor2}}).
Note that the Horikawa index $H_{D}(f^{-1}(p))=\delta_{1}(f^{-1}(p))$ can be regarded as the virtual number of type $1$ nodes on $f^{-1}(p)$ (Example~\ref{localinvex}~(4)).

\smallskip

\noindent
(2) Suppose that $g=3$ and $D$ is the locus of hyperelliptic curves of genus $3$.
Then, it is known that $D=9\lambda-\delta_{0}-3\delta_{1}$ in the Picard group of $\overline{\mathcal{M}}_{3}$ (\cite{HaMu}).
It follows from \eqref{Hordivdecomp} and Proposition~\ref{smmultsplit}~(3)  that
$$
H_{D}=D+2\delta_{1} +3\delta_{(2,1)},
$$
where we note that the locus of smooth curves of genus $3$ with a free involution is contained in the hyperelliptic locus $D$. 
Hence, we obtain the slope equality
$$
K_{f}^2=3\chi_{f}+\sum_{p\in B}H_{D}(f^{-1}(p))=3\chi_{f}+D+2\delta_{1}+3\delta_{(2,1)}
$$
for any relatively minimal non-hyperelliptic fibered surface $f\colon S\to B$ of genus $3$,
which solves the conjecture of Reid {\cite[Conjecture~3.2]{Rei}} and Ashikaga--Konno {\cite[p.33, Conjecture]{AsKo}}.

\smallskip

\noindent
(3) Suppose that $g=4$ and $D$ is the closure of the locus of genus $4$ curves with unique trigonal pencil.
Then, it is known that $D=34\lambda-4\delta_{0}-14\delta_{1}-18\delta_{2}$ in the Picard group of $\overline{\mathcal{M}}_{4}$ ({\cite[Theorem~2]{EiHa}}).
It follows from \eqref{Hordivdecomp} and Proposition~\ref{smmultsplit}~(1) that
$$
H_{D}=\frac{1}{4}D+\frac{5}{2}\delta_{1} +\frac{7}{2}\delta_{2}+\frac{9}{2}\delta_{(3,0)}.
$$
Note that the locus of smooth curves of genus $4$ with a free automorphism of order $3$ is not contained in $D$.
Indeed, the curve $C$ of genus $4$ defined by the two equations 
$$
x_{0}^{2}+x_{1}^{2}+x_{2}x_{3}=0,\quad x_{0}^{3}+x_{1}^{3}+x_{2}^{3}+x_{3}^{3}=0
$$
 in $\mathbb{P}^3$ does not belong to $D$ and has a free automorphism 
$$
x_{0}\mapsto x_{0},\quad x_{1}\mapsto x_{1},\quad x_{2}\mapsto \omega x_{2},\quad  x_{3}\mapsto \omega^{2}  x_{3}
$$ 
of order $3$, where $\omega=\exp(2\pi\sqrt{-1}/3)$.
Hence, we obtain the slope equality
$$
K_{f}^2=\frac{7}{2}\chi_{f}+\sum_{p\in B}H_{D}(f^{-1}(p))=\frac{7}{2}\chi_{f}+\frac{1}{4}D+\frac{5}{2}\delta_{1} +\frac{7}{2}\delta_{2}+\frac{9}{2}\delta_{(3,0)}
$$
for any relatively minimal fibered surface $f\colon S\to B$ of genus $4$ whose general fiber has exactly two trigonal pencils.

\smallskip

\noindent
(4) Suppose that $g=5$ and $D$ is the closure of the locus of trigonal curves of genus $5$.
Then, it is known that $D=8\lambda-\delta_{0}-4\delta_{1}-6\delta_{2}$ in the Picard group of $\overline{\mathcal{M}}_{5}$ (\cite{HaMu}).
It follows from \eqref{Hordivdecomp} and Proposition~\ref{smmultsplit}~(1) that
$$
H_{D}=D+3\delta_{1} +5\delta_{2}+4\delta_{(2,0)}+6\delta_{(4,0)}.
$$
Note that the locus of smooth curves of genus $5$ with a free automorphism of order $4$ (and hence of order $2$) is not contained in $D$.
Indeed, the curve $C$ of genus $5$ defined by three equations in $\mathbb{P}^{4}$ which are general linear combinations of 
\begin{align*}
&x_{0}^2,\quad  (x_{1}+x_{2}+x_{3}+x_{4})^{2},\quad (x_{1}+\sqrt{-1}x_{2}-x_{3}-\sqrt{-1}x_{4})^{2},  \\
&(x_{1}-\sqrt{-1}x_{2}-x_{3}+\sqrt{-1}x_{4})^{2},\quad x_{1}x_{3}+x_{2}x_{4},\quad  x_{1}x_{2}+x_{2}x_{3}+x_{3}x_{4}+x_{4}x_{1} 
\end{align*}
 is tetragonal and has a free automorphism of order $4$ defined as $x_{0}\mapsto x_{0}$, $x_{i}\mapsto x_{i+1}$ for $i=1,\ldots,4$ where $x_{5}:=x_{1}$.
Hence, we obtain the slope equality
$$
K_{f}^2=4\chi_{f}+\sum_{p\in B}H_{D}(f^{-1}(p))=4\chi_{f}+D+3\delta_{1} +5\delta_{2}+4\delta_{(2,0)}+6\delta_{(4,0)}
$$
for any relatively minimal tetragonal fibered surface $f\colon S\to B$ of genus $5$. 
\end{example}

\subsection{Slope inequality of Chern invariants}
Let $f^{-1}(p)$ be a relatively minimal fiber germ.
A question of Lu and Tan \cite[p.3395, Questions~(3)]{LuTa} asks whether $c_{1}^2(f^{-1}(p))\ge \chi_{f^{-1}(p)}$ holds.
If $f^{-1}(p)$ is isotrivial, it is easily seen by using base change and the usual slope inequality \eqref{usualslopeineq} that a more strong inequality 
$$
c_{1}^2(f^{-1}(p))\ge \frac{4(g-1)}{g}\chi_{f^{-1}(p)}
$$ 
holds (see the proof of Theorem~\ref{slope4}).
In this subsection, we give an affirmative answer of this question if the Morsification conjecture or the Moduli conjecture holds true:

\begin{theorem} \label{LuTanQ}
Assume that $f^{-1}(p)$ satisfies the Morsification conjecture or the Moduli conjecture holds true.
Then, we have $c_{1}^2(f^{-1}(p))\ge \chi_{f^{-1}(p)}$.
\end{theorem}

\begin{proof}
We will apply Theorem~\ref{slopeeq} and Proposition~\ref{Horindcompare} to sufficiently ample divisors $D$.
Note that a divisor $D$ of the form $D=a\lambda-b\delta$ is ample if and only if $b>0$ and $s_{D}>11$ ({\cite[Theorem~(1.3)]{CoHa}}).
For $\varepsilon>0$, we can choose an effective ample divisor $D_{\varepsilon}$ on $\overline{\mathcal{M}}_{g}$ of the form $D_{\varepsilon}=a_{\varepsilon}\lambda-b_{\varepsilon}\delta$ with 
$$
s_{D_{\varepsilon}}=\frac{a_{\varepsilon}}{b_{\varepsilon}}=11+\varepsilon
$$
such that the moduli map of the stable reduction of $f^{-1}(p)$ is disjoint from $D_{\varepsilon}$.
By Theorem~\ref{slopeeq}, the Horikawa index $H_{D_{\varepsilon}}(f^{-1}(p))$ is non-negative.
Thus, applying Proposition~\ref{Horindcompare}, we have
$c_1^{2}(f^{-1}(p))\ge (1-\varepsilon)\chi_{f^{-1}(p)}$ for any $\varepsilon>0$.
Taking the limit as $\varepsilon$ approaches $0$, we obtain $c_1^{2}(f^{-1}(p))\ge \chi_{f^{-1}(p)}$.
\end{proof}

\begin{remark} 
Recently, Cheng and Lu \cite{ChLu} gave an affirmative solution to the Lu--Tan question. 
Their method is completely different from ours.
\end{remark}

\subsection{Examples}
Theorem~\ref{slopeeq} implies that an effective divisor $D$ on $\mathcal{M}_{g}$ with smaller slope $s_{D}$ produces a stronger slope inequality of general fibered surfaces of genus $g$.

\smallskip

\noindent
(1) {\em Brill--Noether divisor} $BN$:
Assume that $g+1$ is not a prime number and write $g+1=(r+1)(s-1)$ for some integers $r\ge 1$ and $s\ge 3$.
Put $d:=rs-1$.
Let $BN_{g,d}^{r}$ denote the locus in $\mathcal{M}_{g}$ parametrizing curves carrying a linear  system $\mathfrak{g}^{r}_{d}$ of degree $d$ and rank $r$.
We also denote by the same symbol $BN_{g,d}^{r}$ its closure in $\overline{\mathcal{M}}_{g}$.
Then it is known that $BN_{g,d}^{r}$ is of codimension $1$ and 
$$
BN_{g,d}^{r}=c\left((g+3)\lambda-\frac{g+1}{6}\delta_0-\sum_{i=1}^{\lfloor g/2 \rfloor}i(g-i)\delta_i \right)
$$
 holds in $\Pic_{\Q}(\overline{\mathcal{M}}_{g})$, 
 where $c$ is a positive rational number depending only on $g$, $r$ and $d$ ({\cite[Theorem~1]{EiHa}}).
In particular, we have 
$$
s(BN_{g,d}^{r})=\frac{6(g+3)}{(g+1)}=6+\frac{12}{g+1}.
$$
Applying Theorem~\ref{slopeeq} to $BN_{g,d}^{r}$, we have the following:

\begin{corollary} \label{BNslopeineq}    
Let $f\colon S\to B$ be a relatively minimal fibered surface of genus $g$ whose general fiber has no $\mathfrak{g}^{r}_{d}$ and assume that any fiber of $f$ satisfies the Morsification conjecture or the Moduli conjecture holds true.
Then we have
$$
K_{f}^{2}= \frac{6(g-1)}{g+1}\chi_{f}+\sum_{p\in B}H_{BN_{g,d}^{r}}(f^{-1}(p)) \ge \frac{6(g-1)}{g+1}\chi_{f}.
$$
\end{corollary}

In particular, the claim with $r=1$ gives a new proof of {\cite[Theorem~4.1]{Kon}} under the assumption of the Morsification conjecture or the Moduli conjecture.
The cases of $g=3,5$ are nothing but Example~\ref{genus2345}~(2), (3).

\smallskip

\noindent
(2) {\em Gieseker--Petri divisor} $GP$: 
Assume that $g$ is not a prime number and write $g=(r+1)(s-1)$ for some integers $r\ge 1$ and $s\ge 3$.
Put $d:=rs$.
Let $GP_{g,d}^{r}$ denote the locus in $\mathcal{M}_{g}$ parametrizing curves carrying a  $\mathfrak{g}^{r}_{d}$ violating the Petri condition,
where a line bundle $L$ on a curve $C$ satisfies the {\em Petri condition} if the multiplication map
$$
H^{0}(L)\otimes H^{0}(\omega_{C}\otimes L^{-1})\to H^{0}(\omega_{C})
$$
is injective.
We also denote by the same symbol $GP_{g,d}^{r}$ its closure in $\overline{\mathcal{M}}_{g}$.
If $r=1$, then $d=s=g/2+1$ and it is known that $GP_{g,d}^{1}$ is a divisor satisfying
$$
GP_{g,d}^{1}=c\left(e\lambda-f_{0}\delta_0-\sum_{i=1}^{\lfloor g/2 \rfloor}f_{i}\delta_i \right)
$$
 in $\Pic_{\Q}(\overline{\mathcal{M}}_{g})$, 
where 
$$
c=2\frac{(2d-4)!}{d!(d-2)!},\quad e=6d^2+d-6,\quad f_0=d(d-1),\quad f_1=(2d-3)(3d-2), 
$$
and other coefficients $f_{i}$, $i\ge 2$ are explicitly given and greater than $f_{0}$ ({\cite[Theorem~2]{EiHa}}).
In particular, we have 
$$
s(GP_{g,d}^{1})=\frac{6d^2+d-6}{d(d-1)}=6+\frac{2(7g+2)}{g(g+2)}.
$$
Similar results about $GP_{g,d}^{r}$ for general $r$ are known ({\cite[Theorem~1.6]{Far}}).
Applying Theorem~\ref{slopeeq} to $GP_{g,d}^{1}$, we have the following:

\begin{corollary} \label{GPslopeineq}
Let $f\colon S\to B$ be a relatively minimal fibered surface of even genus $g$ such that  the gonality of general fibers equals $d=g/2+1$ and any gonality pencil satisfies the Petri condition.
Assume that any fiber of $f$ satisfies the Morsification conjecture or the Moduli conjecture holds true.
Then we have
$$
K_{f}^{2}=\frac{2(3g+2)(g-1)}{g(g+2)}\chi_{f}+\sum_{p\in B}H_{GP_{g,d}^{1}}(f^{-1}(p)) \ge \frac{2(3g+2)(g-1)}{g(g+2)}\chi_{f}.
$$
\end{corollary}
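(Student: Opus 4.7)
The plan is to derive this directly from Corollary~\ref{slopeineq} applied to the Gieseker--Petri divisor $D = GP_{g,d}^{1}$ with $d = g/2+1$, using the explicit class formula quoted just above from \cite{EiHa}.

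First I would verify that the hypotheses of Corollary~\ref{slopeineq} are satisfied for this choice of $D$. By assumption, the general fiber $F$ of $f$ has gonality exactly $g/2+1 = d$, so it carries a $\mathfrak{g}^{1}_{d}$. Moreover every such gonality pencil on $F$ satisfies the Petri condition by hypothesis, so $F$ does \emph{not} lie in the Gieseker--Petri locus $GP_{g,d}^{1}$; equivalently, the corresponding moduli map $\rho\colon B\to \mathcal{M}^{*}_{g}$ sends the generic point of $B$ outside (the closure of) $GP_{g,d}^{1}$. Together with the assumption that every fiber of the relative canonical model of $f$ is reduced, this means Corollary~\ref{slopeineq} applies with $D = GP_{g,d}^{1}$, yielding
\begin{equation*}
K_{f}^{2}\ge \bigl(12 - s_{GP_{g,d}^{1}}\bigr)\chi_{f}.
\end{equation*}

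Next I would plug in the slope. Since $GP_{g,d}^{1}$ contains no boundary divisor and its class in $\Pic_{\Q}(\overline{\mathcal{M}}_{g})$ is given (up to a positive multiple) by the coefficients $e = 6d^{2}+d-6$, $f_{0} = d(d-1)$ and $f_{i} \ge f_{0}$ for $i \ge 1$ recalled above, the minimum of the boundary coefficients is $f_{0}$, so
\begin{equation*}
s_{GP_{g,d}^{1}} = \frac{e}{f_{0}} = \frac{6d^{2}+d-6}{d(d-1)} = 6 + \frac{2(7g+2)}{g(g+2)},
\end{equation*}
using $d = g/2+1$. Substituting into the slope inequality, the coefficient of $\chi_{f}$ becomes
\begin{equation*}
12 - s_{GP_{g,d}^{1}} = 6 - \frac{2(7g+2)}{g(g+2)} = \frac{6g(g+2) - 2(7g+2)}{g(g+2)} = \frac{6g^{2} - 2g - 4}{g(g+2)} = \frac{2(3g+2)(g-1)}{g(g+2)},
\end{equation*}
which is exactly the stated bound.

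There is no real obstacle once Corollary~\ref{slopeineq} and the divisor-class formula from \cite{EiHa} are in hand; the proof is a direct substitution. The only point deserving care is the interpretation of the condition ``the general fiber is not contained in $GP_{g,d}^{1}$'': one should note that $GP_{g,d}^{1}$ is defined as the closure in $\overline{\mathcal{M}}_{g}$ of the Petri-failure locus in $\mathcal{M}_{g}$, and that having gonality exactly $g/2+1$ (rather than smaller) together with the Petri condition on all gonality pencils is precisely what is needed to avoid this closed locus for the smooth general fiber. Everything else is arithmetic.
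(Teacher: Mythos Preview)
Your proposal is correct and follows exactly the paper's own approach: the paper simply states that the corollary follows by applying Corollary~\ref{slopeineq} to the Gieseker--Petri divisor $GP_{g,d}^{1}$, and your write-up fills in the routine verification of hypotheses and the arithmetic for $12 - s_{GP_{g,d}^{1}}$. Your remark that gonality exactly $g/2+1$ forces every $\mathfrak{g}^{1}_{d}$ to be a gonality pencil (so the Petri hypothesis on gonality pencils really excludes the general fiber from $GP_{g,d}^{1}$) is the only point requiring a moment's thought, and you handle it correctly.
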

The case for $g=4$ is nothing but Example~\ref{genus2345}~(3), the inequality of which was first proved in \cite{Che}, {\cite[Theorem~4.1]{Kon3}}.

Recently, Okuda \cite{Oku} proved the Morsification conjecture for $g=6$.
Thus, we have the following slope equality for general fibered surfaces of genus $6$:
\begin{corollary} \label{g6slopeineq}
Let $f\colon S\to B$ be a relatively minimal fibered surface of genus $6$ such that  general fibers are tetragonal and any gonality pencil satisfies the Petri condition.
Then, we have
$$
K_{f}^{2}=\frac{25}{6}\chi_{f}+\sum_{p\in B}H_{GP_{6,4}^{1}}(f^{-1}(p)) \ge \frac{25}{6}\chi_{f}.
$$
\end{corollary}
This is already a new result even when restricted to the slope inequality.

\smallskip

\noindent
(3) {\em K3 divisor} $K$: 
Let $\mathcal{F}_{g}$ denote the moduli stack of polarized K3 surfaces of genus $g$.
Let $\mathcal{P}_{g}$ denote the universal $\mathbb{P}^{g}$-bundle on $\mathcal{F}_{g}$ defined by the polarizations, that is, the moduli stack of pairs $((S, H), C)$ where $(S, H)$ is a polarized K3 surface of genus $g$ and $C\in |H|$.
Then, the projection $((S, H), C)\mapsto C$ defines a rational map $\mathcal{P}_{g}\dasharrow \mathcal{M}_{g}$.
It is known that this map is dominant if and only if $1\le g\le 11$ and $g\neq 10$ ({\cite[Theorems~0.7, 6.1]{Muk}}, {\cite[Corollary~2]{MoMu}}).
Now we consider $g=10$ and denote by $K$ the closure in $\overline{\mathcal{M}}_{10}$ of the image of $\mathcal{P}_{10}\dasharrow \mathcal{M}_{10}$.
It is an effective divisor on $\overline{\mathcal{M}}_{10}$ and satisfies
$$
K=7\lambda-\delta_0-5\delta_1-9\delta_2-12\delta_3-14\delta_4-15\delta_5
$$
in $\Pic_{\Q}(\overline{\mathcal{M}}_{10})$ ({\cite[Theorem~1.6]{FaPo}}, {\cite[Theorem~2.18]{Far2}}).
In particular, we have $s(K)=7$.

Applying Theorem~\ref{slopeeq} to $K$, we have the following:
\begin{corollary}
Let $f\colon S\to B$ be a relatively minimal fibered surface of genus $10$ whose general fiber does not lie on a limit of K3 surfaces. 
Assume that any fiber of $f$ satisfies the Morsification conjecture or the Moduli conjecture holds true.
Then we have
$$
K_{f}^{2}=5\chi_{f}+\sum_{p\in B}H_{K}(f^{-1}(p)) \ge 5\chi_{f}.
$$
\end{corollary}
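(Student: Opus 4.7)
The plan is to apply Corollary~\ref{slopeineq} directly to $D = K$, treating this as a bookkeeping exercise. First I would verify the hypotheses of that corollary. By construction, $K$ is the closure in $\overline{\mathcal{M}}_{10}$ of an effective divisorial locus in $\mathcal{M}_{10}$, so it is an effective divisor. From the stated presentation
$$K = 7\lambda - \delta_0 - 5\delta_1 - 9\delta_2 - 12\delta_3 - 14\delta_4 - 15\delta_5$$
in $\Pic_{\Q}(\overline{\mathcal{M}}_{10})$, we see that every boundary divisor $\delta_i$ appears with a strictly negative coefficient, which means $K$ does not contain any $\delta_i$ as a component (as required in Definition~\ref{efftautodef}).

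Next I would read off the slope: in the notation of Definition~\ref{efftautodef}, $a = 7$ and $(b_0, b_1, b_2, b_3, b_4, b_5) = (1, 5, 9, 12, 14, 15)$, so $b = \min_i b_i = b_0 = 1$, giving $s_K = a/b = 7$. The geometric hypothesis that the general fiber of $f$ does not lie on a limit of K3 surfaces is precisely the statement that the moduli point of the generic fiber of $f$ is not contained in $K \subset \overline{\mathcal{M}}_{10}$, since $K$ is the closure of the locus of curves embeddable on a K3 surface. Together with the reducedness hypothesis on fibers of the relative canonical model, the three conditions required by Corollary~\ref{slopeineq} are satisfied.

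Applying Corollary~\ref{slopeineq} then gives
$$K_f^2 \ge (12 - s_K)\chi_f = (12 - 7)\chi_f = 5\chi_f,$$
which is the desired inequality. There is no real obstacle here: the work was already done in establishing Corollary~\ref{slopeineq} and in citing the presentation of $K$ from \cite{FaPo} and \cite{Far2}; the only point worth emphasizing is the translation between the divisor-theoretic condition ``generic fiber not contained in $D$'' and the geometric phrasing ``general fiber does not lie on a limit of K3 surfaces,'' which is immediate from the definition of $K$ as a closure.
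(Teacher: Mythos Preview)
Your proposal is correct and matches the paper's approach exactly: the paper gives no separate proof, stating the corollary as an immediate application of Corollary~\ref{slopeineq} to $D=K$ after recording $s(K)=7$. Your write-up simply spells out the verification of hypotheses and the arithmetic $12-7=5$ that the paper leaves implicit.
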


\subsection{Relation to the Slope conjecture}
Harris and Morrison \cite{HaMo} predicted that $s_{D}$ is greater than or equal to $6+12/(g+1)$ for any non-zero effective divisor $D$ on $\overline{\mathcal{M}}_{g}$ containing no boundary divisors with equality holding if and only if $D$ is a linear combination of Brill--Noether divisors, which is called the {\em Slope conjecture}.
This conjecture is false in general and the first counterexample to be discovered was the K3 divisor $K$ on $\overline{\mathcal{M}}_{10}$ (\cite{FaPo}).
For other counterexamples and the current status for the Slope conjecture, see for example \cite{Far}, \cite{Far2} and \cite{FJP}.
Let $s_{g}$ denote the infimum of $s_{D}$ among all non-zero effective divisors $D$ on $\overline{\mathcal{M}}_{g}$ containing no boundary divisors.
For $g\le 11$, the numbers $s_{g}$ are known as in Table~\ref{sglist} (\cite{HaMo}, \cite{Tan}, \cite{FaPo}).
\begin{table}[h]

\caption{$s_g$ for $g\le 11$}
  \label{sglist}
\begin{tabular}{c|cccccccccc}
$g$ & $2$ & $3$ & $4$ & $5$ & $6$ & $7$ & $8$ & $9$ & $10$ & $11$ \\ \hline
$s_g$ & $10$ & $9$ & $\frac{17}{2}$ & $8$ & $\frac{47}{6}$ & $\frac{15}{2}$ & $\frac{22}{3}$ & $\frac{36}{5}$ & $7$ & $7$ \\
$D$ with $s_g=s_{D}$ & $\delta_1$ & $BN$ & $GP$ & $BN$ & $GP$ & $BN$ & $BN$ & $BN$ & $K$ & $BN$
\end{tabular}

\end{table}
In particular, the Slope conjecture is true for $g\le 11$ and $g\neq 10$.
Theorem~\ref{slopeeq} implies that
if $s_{g}$ attains some effective divisor $D$ on $\overline{\mathcal{M}}_{g}$, 
the slope of relatively minimal general fibered surfaces of genus $g$ is greater than or equal to $12-s_{g}$ 
under the assumption of the Morsification conjecture or the Moduli conjecture.

In \cite{CFM}, Chen conjectured that $\lim_{g\to \infty}s_{g}=0$ while Farkas and Morrison conjectured that $\lim_{g\to \infty}s_{g}=6$.
The following applications of Theorem~\ref{slopeeq} give little evidence for the later conjecture:

\begin{theorem} \label{slope4}
Assume that the Morsification conjecture or the Moduli conjecture holds true.
Let $D$ be an effective divisor on $\overline{\mathcal{M}}_{g}$ which contains no boundary divisors.
If $s_{D}\le 4$, then $D$ contains the locus of smooth curves with non-trivial automorphisms with fixed points.
\end{theorem}

\begin{proof}
Assume contrary that $D$ does not contain the moduli point of a smooth curve $C$ of genus $g$ with an automorphism $\sigma$ of order $m\ge 2$ having fixed points.
If $s_{D}<4$, we can  replace $D$ by $D+A$ for an ample effective $\Q$-divisor $A$ which does not contain $C$ with $s_{D+A}=4$.
Thus we may assume that $s_{D}=4$.
We construct an isotrivial fibered surface $f\colon S\to B$ with $m$ singular fibers isomorphic each other and with the general fiber isomorphic to $C$ as follows:
First we consider the standard cyclic action of $\sigma$ to $\mathbb{P}^{1}$ of order $m$ and the quotient of the diagonal action $S_{1}:=(C\times \mathbb{P}^1)/\sigma$.
Next we take a cyclic covering $B\to \mathbb{P}^{1}\cong \mathbb{P}^{1}/\sigma$ of order $m-1$ totally branched at $\infty$ and unramified over $0$.
Taking the base change, we obtain a fibration $S_{1}\times_{\mathbb{P}^{1}}B\to B$.
Taking a resolution and the relatively minimal model, we obtain a desired fibration $f\colon S\to B$.
Note that each singular fiber $f^{-1}(p)$ is completely determined by the action $\sigma$ to $C$, and has at least two irreducible components since $\sigma$ has fixed points and hence $S_{1}$ is singular.
Moreover, $f^{-1}(p)$ is not a multiple of a nodal curve since it is isotrivial.
Then we have $c_{1}^{2}(f^{-1}(p))<8\chi_{f^{-1}(p)}$ by the Miyaoka--Yau type inequality.
It follows from the local-to-global formula that
$$
K_{f}^{2}=mc_{1}^{2}(f^{-1}(p))<8m\chi_{f^{-1}(p)}=8\chi_{f}.
$$
On the other hand, Theorem~\ref{slopeeq} implies that $K_{f}^{2}\ge (12-s_{D})\chi_{f}=8\chi_{f}$, which is a contradiction.
\end{proof}

\begin{definition} \label{reducedtypedef}
For a smooth curve $C$ of genus $g\ge 2$, a non-trivial automorphism $\sigma\in \mathrm{Aut}(C)$ is {\em of reduced type} if the fiber $f^{-1}(p)$ constructed as in the proof of Theorem~\ref{slope4} has the reduced canonical model.
For example, the covering transformation of a simple cyclic covering $C\to \mathbb{P}^1$ of degree $m$ branched at $m$ or $2m$ points and an automorphism of maximal order $4g+2$ are of reduced type.
\end{definition}

\begin{theorem} \label{slope6}
Let $D$ be an effective divisor on $\overline{\mathcal{M}}_{g}$ which contains no boundary divisors.
If $s_{D}< 6$, then $D$ contains the locus of smooth curves with automorphisms of reduced type.
\end{theorem}

\begin{proof}
The proof is similar to the proof of Theorem~\ref{slope4} by replacing the Miyaoka--Yau type inequality $c^{2}_{1}(f^{-1}(p))<8\chi_{f^{-1}(p)}$ with the stronger version $c^{2}_{1}(f^{-1}(p))\le 6\chi_{f^{-1}(p)}$.
Note that Theorem~\ref{slopeeq} can be applied by Proposition~\ref{splitcri}.
\end{proof}

\subsection{Relation to the geography of surfaces of general type}
It is classically known that slope inequalities of the form $K_{f}^{2}\ge (12-s_{g})\chi_{f}$ are related to the geography of surfaces of general type.

In the case $g=2$, the slope inequality $K_{f}^{2}\ge 2\chi_{f}$ holds for genus $2$ fibrations $f\colon S\to B$ (\cite{Hor2}), which is now a special case of the usual slope inequality \eqref{usualslopeineq} (\cite{Xia}, \cite{CoHa}).
This is equivalent to the Noether inequality $K_{S}^{2}\ge 2\chi(\O_{S})-6$ when $B=\mathbb{P}^1$.
Almost all surfaces $S$ of general type near the Noether line naturally admits genus $2$ fibrations over $\mathbb{P}^1$ (\cite{Hor}).

For $g=3$, then the slope inequality $K_{f}^{2}\ge 3\chi_{f}$ holds for non-hyperelliptic fibered surfaces $f\colon S\to B$ of genus $3$ (\cite{Rei}, \cite{Kon3}).
This is equivalent to the Castelnuovo inequality $K_{S}^{2}\ge 3\chi(\O_{S})-10$ when $B=\mathbb{P}^1$.
Almost all surfaces $S$ whose canonical maps are birational onto image near the Castelnuovo line naturally admit non-hyperelliptic genus $3$ fibrations over $\mathbb{P}^1$ (\cite{AsKo}, \cite{Kon2}).

For $g=5$, the slope inequality $K_{f}^{2}\ge 4\chi_{f}$ holds for tetragonal fibered surfaces $f\colon S\to B$ of genus $5$ (\cite{Kon3}, \cite{Kon}).
This is equivalent to Reid's conjectual inequality $K_{S}^{2}\ge 4\chi(\O_{S})-16$ (assuming the irregularity of $S$ vanishes) when $B=\mathbb{P}^1$.
Reid's conjecture, which is a surface analog of the Enriques--Babbage--Petri theorem for curves, states the following:

\begin{conjecture}[Reid's conjecture {\cite[p.541]{Rei2}}]  \label{Reidconj}
Let $S$ be a minimal surface of general type whose canonical map is birational onto the image $X\subseteq \mathbb{P}^{p_{g}(S)-1}$.
If $X$ is the intersection of all hyperquadrics through $X$, then $K_{S}^{2}\ge 4p_{g}(S)-12$ holds.
\end{conjecture}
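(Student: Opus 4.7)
My plan is to reduce Reid's conjecture to Corollary~\ref{BNslopeineq} applied to non-trigonal fibrations of genus $5$, paralleling the way Horikawa's and Ashikaga--Konno's analyses convert the Noether and Castelnuovo inequalities into slope inequalities for genus $2$ and genus $3$ fibrations. Specializing Corollary~\ref{BNslopeineq} to $g=5$, $r=1$, $d=3$ gives $K_f^2\ge 4\chi_f$ for any relatively minimal non-trigonal genus $5$ fibration $f\colon\widetilde S\to \mathbb{P}^1$ whose relative canonical model has reduced fibers. Using the standard relations $K_f^2=K_{\widetilde S}^2+8(g-1)$, $K_{\widetilde S}^2\le K_S^2$ and $\chi_f=\chi(\O_S)+g-1$, together with Noether's formula $\chi(\O_S)=p_g(S)+1$ when $q(S)=0$, this inequality with $g=5$ converts exactly to $K_S^2\ge 4p_g(S)-12$, which is Reid's bound.

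With this translation in place, the substantive steps are: (i) from the hypothesis that the canonical image $X\subseteq\mathbb{P}^{p_g-1}$ is scheme-theoretically cut out by quadrics, construct on $X$ a pencil of curves whose generic member $C$ has arithmetic genus $5$; (ii) show that $C$ inherits from $X$ the property of being cut out by quadrics in its ambient linear span, so Enriques--Babbage--Petri forces $C$ to be non-trigonal and not a plane quintic; (iii) pull the pencil back to $S$, resolve its base locus and contract fibral $(-1)$-curves to obtain a relatively minimal genus $5$ fibration $f\colon\widetilde S\to\mathbb{P}^1$, and verify that the fibers of its relative canonical model are reduced (or invoke the conjectural extension of Theorem~\ref{Introslopeineq} to the non-reduced case); (iv) handle $q(S)>0$ via the Albanese morphism, or else show directly that Reid's hypotheses force $q(S)=0$.

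The main obstacle is step~(i): there is no a priori reason for a quadric-generated canonical surface in $\mathbb{P}^{p_g-1}$ of arbitrary $p_g$ to carry a pencil whose generic member has arithmetic genus exactly $5$, so a geometric mechanism producing such a pencil must be identified. I expect this to require a classification-theoretic input -- analogous to Horikawa's description of surfaces on the Noether line as bidouble covers, or Ashikaga--Konno's extraction of non-hyperelliptic genus $3$ pencils near the Castelnuovo line -- singling out the extremal surfaces on the putative Reid line $K_S^2=4p_g-12$ and realizing the pencil inside an explicit birational model of $X$ (plausibly via a Mukai-style embedding of $X$ in a higher-dimensional Fano variety). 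Without such a structural statement, step~(i) is essentially as hard as Reid's conjecture itself; what the present paper contributes is that once the pencil is produced, step~(iii) becomes unconditional on semistable reduction and tolerates rather wild fiber degenerations.
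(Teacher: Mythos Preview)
The statement you are addressing is labelled \textbf{Conjecture}~\ref{Reidconj} in the paper, and the paper does not claim to prove it: immediately after stating it, the author writes ``This conjecture is still open although it is true for even surfaces with irregularity $0$'' (citing Konno). There is therefore no ``paper's own proof'' to compare against; Reid's conjecture appears in the paper only as motivation for the relevance of the $g=5$ slope inequality.

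Your proposal is candid about this: you yourself identify that step~(i) --- producing on an arbitrary quadric-generated canonical surface a pencil of arithmetic genus~$5$ curves --- is ``essentially as hard as Reid's conjecture itself''. That assessment is correct, and it means your write-up is not a proof but a heuristic explaining why the $g=5$ slope inequality is related to Reid's bound. Two further points: first, the slope inequality $K_f^2\ge 4\chi_f$ for non-trigonal genus~$5$ fibrations is already known unconditionally (without the reducedness hypothesis) by Konno \cite{Kon3}, \cite{Kon}, so Corollary~\ref{BNslopeineq} is not needed for that step; second, even granting a genus~$5$ pencil, you would still need $q(S)=0$ to convert $\chi(\O_S)$ into $p_g(S)+1$, and your step~(iv) merely names this issue without resolving it. In summary, the paper does not prove the conjecture, and neither does your proposal.
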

This conjecture is still open although it is true for even surfaces with irregularity $0$ ({\cite[Theorem~8.4]{Kon5}}).
According to \cite{Kon4}, all even surfaces $S$ with irregularity $0$ satisfying the assumption of Conjecture~\ref{Reidconj} with $K_{S}^{2}=4p_{g}(S)-12$ are classified into 5 classes.
One of these classes consists of surfaces $S$ that admit tetragonal genus $5$ fibrations $f\colon S\to \mathbb{P}^1$ with $K_{f}^{2}=4\chi_{f}$.

For $g=10$ (resp.\ $g=11$), the slope inequality $K_{f}^{2}\ge 5\chi_{f}$ is equivalent to $K_{S}^{2}\ge 5\chi(\O_{S})-27$ (resp.\ $K_{S}^{2}\ge 5\chi(\O_{S})-30$).
Here we propose the following natural question:

\begin{question}
What geometric condition on surfaces give rise to the inequality $K_{S}^{2}\ge 5\chi(\O_{S})-27$ $($resp.\  $K_{S}^{2}\ge 5\chi(\O_{S})-30$$)$?
Do general fibered surfaces of genus $10$ $($resp.\ genus $11$$)$ over $\mathbb{P}^1$ form a major class of surfaces with this geometric condition near the line $K_{S}^{2}=5\chi(\O_{S})-27$ $($resp.\  $K_{S}^{2}=5\chi(\O_{S})-30$$)$?
\end{question}

\end{document}